\newtheorem{theorem}{Theorem}
\newtheorem{corollary}{Corollary}
\newtheorem{example}{Example}
\newtheorem{lemma}{Lemma}
\newtheorem{proposition}{Proposition}
\newtheorem{remark}{Remark}
\newcommand{\eps}{\varepsilon}
\DeclareMathAlphabet{\mathpzc}{OT1}{pzc}{m}{it}
\DeclareMathOperator{\Graph}{Gr}
\DeclareMathOperator{\co}{co}
\DeclareMathOperator{\modulus}{mod}
\DeclareMathOperator{\supp}{supp}
\newcommand{\?}{\vspace{\baselineskip}}
\newcommand{\w}{\tilde}
\newcommand{\map}{\multimap}
\newcommand{\<}{\leqslant}
\newcommand{\n}{{n\geqslant 1}}
\newcommand{\K}{{k\geqslant 1}}
\newcommand{\R}[1]{\varmathbb{R}^{#1}}
\newcounter{Page}
\begin{document}
\title[Structure of the solution set to Volterra integral inclusions and applications]{Structure of the solution set to Volterra integral inclusions and applications} 
\author{Rados\l aw Pietkun}
\subjclass[2010]{45D05, 45M15, 47H08, 47H10, 47H30}
\keywords{solution set, integral inclusion, fixed point theorem, periodic solution}
\address{Toru\'n, Poland}
\email{rpietkun@pm.me}
\begin{abstract}
The topological and geometric structure of the solution set to Volterra integral inclusions in Banach spaces is investigated. It is shown that the set of solutions in the sense of Aumann integral is nonempty compact acyclic in the space of continuous functions or is even an $R_\delta$-set provided some appropriate conditions on the Banach space are imposed. Applications to the periodic problem for this type of inclusions are given.
\end{abstract}
\maketitle

\section{Introduction}
\par A key issue in the description of the geometry of the set of solutions of the differential inclusion \[\dot{x}(t)\in F(t,x(t))\] is the possibility to free ``switching'' between trajectories within this set by selecting as the starting point of the new solution any point of evaluation of the current solution. A particularly clear illustration of this property is the solution of the differential equation \[\dot{x}(t)=f(t,x(t)),\] whose integral description \[x(t)=x(\tau)+\int_\tau^tf(s,x(s))\,ds\] remains true for any point $\tau$ in a specific time interval. But the situation is complicated even in the case of the simplest integral equations enriched by the additional variable of time. Let us visualize this difficulty by the following example of Volterra-type equation
\begin{equation}\label{1}
x(t)=\int_0^tk(t,s)f(s,x(s))\,ds,\;t\geqslant 0.
\end{equation}
The differential case seems to suggest that the solution of the equation \eqref{1}, ``starting'' at the time $\tau>0$ from the point $x(\tau)$ should be presented as follows \[y(t)=x(\tau)+\int_\tau^tk(t,s)f(s,y(s))\,ds,\;t\geqslant\tau.\] However, the mapping $z$ defined by the formula
\[z(t)=
\begin{cases}
x(t)&\mbox{for }t\<\tau,\\
y(t)&\mbox{for }t\geqslant\tau,\\
\end{cases}\]
is not only a continuation of the solution $x$, but in principle does not satisfy the equation \eqref{1}. Indeed, if we assume that \[z(t)=\int_0^tk(t,s)f(s,z(s))\,ds,\] then for $t>\tau$ there must be equality \[\int_0^tk(t,s)f(s,z(s))\,ds=x(\tau)+\int_\tau^tk(t,s)f(s,y(s))\,ds.\] 
Using the definition of $z$ we obtain the dependence 
\begin{align*}
\int_0^\tau k(t,s)f(s,x(s))\,ds+\int_\tau^tk(t,s)f(s,y(s))\,ds=\int_0^\tau k(\tau,s)f(s,x(s))\,ds+\int_\tau^tk(t,s)f(s,y(s))\,ds,
\end{align*}
which amounts to \[\int_0^\tau k(t,s)f(s,x(s))\,ds=\int_0^\tau k(\tau,s)f(s,x(s))\,ds.\] This equality is obviously false in general, non-trivial case of the kernel $k$ dependent on a parameter, which is not subjected to integration. This seemingly simple observation shows however, according to the author, the source of misunderstandings and erroneous constructions used in the previously published results concerning the geometric structure of the solution set of integral inclusions. The wrong approach described above can be found both in the older literature demonstrating connectedness of the set of solutions (\cite[Th.1]{bulg1}, \cite[Th.3]{bulg2}) as well as newer, justifying acyclicity (\cite[Th.4.11]{gelman}) or the $R_\delta$-property (\cite[Th.1.2.2]{agarwal}). It seems that the obvious way out of this situation is to substitute in place of the mapping $y$ the solution of the integral equation of the form \[y(t)=\int_0^\tau k(t,s)f(s,x(s))\,ds+\int_\tau^tk(t,s)f(s,y(s))\,ds,\;t\geqslant\tau.\]
\par However, adoption of this method involves additional technical difficulties, accompanying the need to justify the convergence of a sequence of integral inclusion perturbations in the functional space in place of a usual pointwise convergence of solutions subjected to evaluation. Overcoming these difficulties forced the adoption of stronger assumptions $(k_1)$-$(k_4)$ on the kernel $k$ than those generic ones used in the cited publications (assumptions of the form $(k_5)$-$(k_6)$). These assumptions not only ensure the correctness of the definition and the continuity of the Volterra integral operator, but also guarantee the uniqueness of the description of solutions to the inclusion, whose right side is a multivalued Aumann integral. It remains an open question to what extent is this choice optimal for a description of such geometric properties of the solution set as acyclicity.
\section{Preliminaries}
Let $(E,|\cdot|)$ be a Banach space. For any $\eps>0$ and $A\subset E$, $B(A,\eps)$ ($D(A,\eps)$) is an open (closed) $\eps$-neighbourhood of the set $A$. The closure and the closed convex envelope of $A$ will be denoted by $\overline{A}$ and $\overline{\co} A$, respectively. The (normed) space of bounded linear endomorphisms of $E$ is denoted by $\mathcal{L}(E)$ and $E^*$ stands for the normed dual of $E$. Given $S\in\mathcal{L}(E)$, $||S||_{{\mathcal L}}$ is the norm of $S$.\par For $a, b\in\R{}$, $(C([a,b],E),||\cdot||)$ is the Banach space of continuous maps $[a,b]\to E$ equipped with the maximum norm. Let $1\<p<\infty$. By $(L^p([a,b],E),||\cdot||_p)$ we mean the Banach space of all (Bochner) $p$-integrable maps $f\colon[a,b]\to E$, i.e. $f\in L^p([a,b],E)$ iff f is strongly measurable and \[||f||_p=\left(\int_a^b|f(t)|^p\,dt\right)^{\frac{1}{p}}<\infty.\] Recall that strong measurability is equivalent to the usual measurability in case $E$ is separable. In what follows we shall make a frequent use of this generalization of the well-known Dunford-Pettis weak compactness criterion.
\begin{theorem}{\em (\cite[Prop.11]{ulger})}\label{2}
Suppose the function $g\colon[a,b]\to\R{}$ is integrable and the sequence $(f_n)_\n$ in $L^1([a,b],E)$ is such that:
\begin{itemize}
\item[(i)] for almost all $t\in[a,b]$ and for all $\n$, $|f_n(t)|\<g(t)$,
\item[(ii)] for almost all $t\in[a,b]$, the sequence $(f_n(t))_\n$ is relatively weakly compact.
\end{itemize}
Then the sequence $(f_n)_\n$ is relatively weakly compact.
\end{theorem}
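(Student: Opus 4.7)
The plan is to reduce to the case of a separable Banach space $E$, read off uniform integrability from (i), and combine this with (ii) through an abstract characterisation of relative weak compactness in $L^1([a,b],E)$.

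First I would reduce to separable $E$: strong measurability forces each $f_n$ to be a.e.\ separable-valued, so after discarding a null set I may replace $E$ by the closed linear span $E_0$ of $\bigcup_\n f_n([a,b])$, which is separable, and treat $(f_n)$ as a sequence in $L^1([a,b],E_0)$. Next I would extract uniform integrability from (i): $\int_A|f_n|\leqslant\int_Ag$ for every measurable $A\subset[a,b]$, and by absolute continuity of the Lebesgue integral of $g$ the right-hand side vanishes uniformly in $n$ as the measure of $A$ tends to $0$. Hence $(f_n)$ is bounded and equi-integrable in $L^1([a,b],E_0)$---the first classical ingredient for relative weak compactness in vector-valued $L^1$.

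To supply the second ingredient I would form, for a.e.\ $t$, the set $K(t)=\overline{\{f_n(t):\n\}}^{\,w}$, which by (ii) is weakly compact, by (i) lies in $\overline{B}(0,g(t))$, and thanks to the separability of $E_0$ together with the measurability of each $f_n$ has measurable graph. I would then invoke the characterisation that Ülger establishes in the cited reference: a uniformly integrable family in $L^1([a,b],E)$ almost everywhere contained in the values of a measurable, weakly-compact-valued multifunction is relatively weakly compact. Applied to $\{f_n\}$ dominated by $K$, this yields the desired conclusion.

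The real difficulty is precisely the gap bridged by that characterisation: condition (ii) supplies pointwise weak compactness with an extracted subsequence potentially depending on $t$, whereas one needs a single subsequence converging in the weak topology of $L^1([a,b],E)$. A natural route to the criterion would be to compose with functionals $\phi\in E^*$, apply the scalar Dunford--Pettis theorem to $(\phi\circ f_n)$ in $L^1([a,b])$, diagonalise over a countable weak-$*$-dense subset of $E^*$ made available by the separability reduction, and then identify the candidate weak $L^1$-limit of a common subsequence by using that its values must lie a.e.\ inside the weakly compact sets $K(t)$.
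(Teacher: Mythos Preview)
The paper does not prove this theorem at all: it is quoted verbatim as \cite[Prop.~11]{ulger} and used as a black box throughout (see the sentence immediately preceding the statement, ``In what follows we shall make a frequent use of this generalization of the well-known Dunford--Pettis weak compactness criterion''). There is therefore nothing in the paper to compare your attempt against; the author simply invokes \"Ulger's result without reproducing any argument.

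As for your sketch on its own merits: the reduction to separable $E$ and the extraction of uniform integrability from (i) are clean and correct. The third paragraph, however, is circular as written --- you propose to prove \"Ulger's Proposition~11 by invoking ``the characterisation that \"Ulger establishes in the cited reference,'' which is essentially the same statement (or at least is proved in \"Ulger's paper by the same machinery). Your final paragraph is an honest attempt to close this circle, but the diagonalisation you describe is where the real work lies and is not as straightforward as suggested: obtaining, for a countable weak-$*$ dense set $\{\phi_j\}\subset E_0^*$, a common subsequence along which each $\phi_j\circ f_{n_k}$ converges weakly in $L^1([a,b])$ does not by itself produce a limit function $f\in L^1([a,b],E_0)$, nor does it immediately yield convergence of $\int_a^b\langle f_{n_k}(t),\psi(t)\rangle\,dt$ for arbitrary $\psi\in L^\infty([a,b],E_0^*)$. \"Ulger's actual proof goes through a different route (via a characterisation of weak compactness in terms of convex combinations and Talagrand's work on the Pettis integral), and a fully self-contained argument along your lines would need substantially more detail at exactly this point.
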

\par Given metric space X, a set-valued map $F\colon X\map E$ assigns to any $x \in X$ a nonempty subset $F(x)\subset E$. $F$ is (weakly) upper semicontinuous, if the small inverse image $F^{-1}(A)=\{x\in X\colon F(x)\subset A\}$ is open in $X$ whenever $A$ is (weakly) open in $E$. A map $F\colon X\map E$ is upper hemicontinuous if for each $p\in E^*$, the function $\sigma(p,F(\cdot))\colon X\to\R{}\cup\{+\infty\}$ is upper semicontinuous (as an extended real function), where $\sigma(p,F(x))=\sup_{y\in F(x)}\langle p,y\rangle$. It is clear that if a set-valued map is weakly upper semicontinuous, then it is upper hemicontinuous. Conversely (see \cite[Th.1.4.2]{aubin}), if a multivalued map is upper hemicontinuous and has weakly compact convex values, then it is weakly upper semicontinuous. We have the following characterization (\cite[Prop.2(b)]{bothe}): a map $F\colon X\map E$ with convex values is weakly upper semicontinues and has weakly compact values iff given a sequence $(x_n,y_n)$ in the graph $\Graph(F)$ with $x_n\to x$ in $X$, there is a subsequence $y_{k_n}\rightharpoonup y\in F(x)$ ($\rightharpoonup$ denotes the weak convergence).
\par The following property, known as the convergence theorem, of upper hemicontinuous maps with convex values shall play a crucial role in section 3. of this paper.
\begin{theorem}\label{convergence}
Suppose the multivalued map $F\colon E\map E$ with closed convex values is upper hemicontinuous. If $I$ is a finite interval of $\,\R{}$ and sequences $(x_n\colon I\to E)_\n$ and $(y_n\colon I\to E)_\n$ satisfy the following conditions
\begin{itemize}
\item[(i)] $(x_n)_\n$ converges almost everywhere to a function $x\colon I\to E$,
\item[(ii)] $(y_n)_\n$ converges weakly in the space $L^1(I,E)$ to a function $y\colon I\to E$,
\item[(iii)] $y_n(t)\in\overline{\co}B(F(B(x_n(t),\eps_n)),\eps_n)$ for almost all $t\in I$, where $\eps_n\to 0^+$ as $n\to\infty$,
\end{itemize}
then $y(t)\in F(x(t))$ for almost all $t\in I$.
\end{theorem}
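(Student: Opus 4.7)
The plan is to combine Mazur's lemma (to recover strong/pointwise information from the weak convergence of $(y_n)$) with the support-function characterization of inclusion in a closed convex set (to exploit upper hemicontinuity).

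First I would invoke Mazur's lemma: since $y_n\rightharpoonup y$ in $L^1(I,E)$, there exist convex combinations $z_n=\sum_{k=n}^{N_n}\lambda_{n,k}y_k$ with $\lambda_{n,k}\geqslant 0$ and $\sum_k\lambda_{n,k}=1$, such that $z_n\to y$ in the norm of $L^1(I,E)$. Passing to a subsequence, $z_n(t)\to y(t)$ for almost every $t\in I$. Intersecting with the full-measure set on which $x_n(t)\to x(t)$ and assumption (iii) holds, I fix a typical point $t\in I$ at which all three properties simultaneously occur, and work pointwise from now on.

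Next, I would rely on the fact that a closed convex set $C\subset E$ contains a point $v$ if and only if $\langle p,v\rangle\leqslant\sigma(p,C)$ for every $p\in E^*$ (Hahn-Banach). Thus it suffices to show that $\langle p,y(t)\rangle\leqslant\sigma(p,F(x(t)))$ for an arbitrary $p\in E^*$. From (iii) and the elementary identities $\sigma(p,\overline{\co}A)=\sigma(p,A)$ and $\sigma(p,B(A,\eps))\leqslant\sigma(p,A)+\eps|p|$, I get
\[
\langle p,y_n(t)\rangle\leqslant\sigma(p,F(B(x_n(t),\eps_n)))+\eps_n|p|=\sup_{z\in B(x_n(t),\eps_n)}\sigma(p,F(z))+\eps_n|p|.
\]
Upper hemicontinuity of $F$ means exactly that $\sigma(p,F(\cdot))$ is upper semicontinuous at $x(t)$, so for every $\delta>0$ there is a neighbourhood of $x(t)$ on which this support function is bounded above by $\sigma(p,F(x(t)))+\delta$. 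Since $x_n(t)\to x(t)$ and $\eps_n\to 0^+$, eventually $B(x_n(t),\eps_n)$ is contained in that neighbourhood, so $\limsup_n\langle p,y_n(t)\rangle\leqslant\sigma(p,F(x(t)))$.

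Finally I transfer this bound from $y_n(t)$ to $z_n(t)$. Convex combinations preserve upper bounds: $\langle p,z_n(t)\rangle=\sum_{k=n}^{N_n}\lambda_{n,k}\langle p,y_k(t)\rangle\leqslant\sup_{k\geqslant n}\langle p,y_k(t)\rangle$, whose lim sup is again at most $\sigma(p,F(x(t)))$. Strong convergence $z_n(t)\to y(t)$ yields $\langle p,z_n(t)\rangle\to\langle p,y(t)\rangle$, whence $\langle p,y(t)\rangle\leqslant\sigma(p,F(x(t)))$. Since $p\in E^*$ was arbitrary and $F(x(t))$ is closed convex, $y(t)\in F(x(t))$. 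The main subtlety that I expect to have to handle carefully is the interchange of the three asymptotic regimes $(x_n(t)\to x(t)$, $\eps_n\to 0$, and tail-indexed Mazur combinations); the crucial point is that Mazur's lemma yields combinations of $y_k$ with $k\geqslant n$, which is what allows the upper-hemicontinuity estimate to be applied to all terms appearing in $z_n(t)$ simultaneously.
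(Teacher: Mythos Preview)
The paper does not actually supply a proof of this statement: it is quoted as a known result (``the convergence theorem'') and used as a tool in later sections. So there is no paper-proof to compare against.

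Your argument is correct and is essentially the standard one. The three ingredients --- Mazur's lemma to upgrade weak $L^1$-convergence to almost-everywhere convergence of tail convex combinations, the support-function criterion $v\in C\iff\langle p,v\rangle\leqslant\sigma(p,C)$ for closed convex $C$, and the upper semicontinuity of $\sigma(p,F(\cdot))$ --- are exactly what drives the classical proofs (see e.g.\ Aubin--Cellina, \cite{aubin}). Two small remarks on presentation: first, when you intersect the full-measure sets, note explicitly that the exceptional set for condition (iii) may depend on $n$, so you take the countable intersection over all $n$; second, after extracting the a.e.-convergent subsequence of $(z_n)$ the tail-index structure of Mazur's combinations is preserved (as you observe in your final paragraph), so the estimate $\langle p,z_{n_j}(t)\rangle\leqslant\sup_{k\geqslant n_j}\langle p,y_k(t)\rangle$ still feeds into $\limsup_k\langle p,y_k(t)\rangle\leqslant\sigma(p,F(x(t)))$. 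With those two clarifications made explicit the proof is complete.
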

\begin{remark}
The thesis of this theorem remains true under the assumption that $(y_n)_\n$ converges weakly to $y$ in the space $L^p(I,E)$ for $p\in(1,\infty)$. Justification of this observation is not possible without an immersion in the proof of convergence theorem, unless we are able to demonstrate weak convergence of the sequence $(y_n)_\n$ in the space $L^1(I,E)$. It can be done under the additional assumption on the Banach space $E$. Namely, if $E^*$ has the Radon-Nikod\'ym property (this is the case for instance if $E$ is reflexive), then there is an isometrically isomorphic embedding $L^1(I,E)^*\hookrightarrow L^p(I,E)^*$, in view of the duality theorem (\cite[Th.IV.1.1]{uhl}). In that sense weak convergence in $L^p(I,E)$ results in weak convergence in $L^1(I,E)$.
\end{remark}
\par An upper semicontinuous map $F\colon E\map E$ is called acyclic if it has compact acyclic values. A set-valued map $F\colon E\map E$ is admissible (compare \cite[Def.40.1]{gorn}) if there is a metric space $X$ and two continuous functions $p\colon X\to E$, $q\colon X\to E$ from which $p$ is a Vietoris map such that $F(x)=q(p^{-1}(x))$ for every $x\in E$. Clearly, every acyclic map is admissible. Moreover, the composition of admissible maps is admissible (\cite[Th.40.6]{gorn}). In particular the composition of two acyclic maps is admissible. 
\par A real function $\gamma$ defined on the family of bounded subsets of $E$ is called a measure of non-compactness (MNC) if $\gamma(\Omega)=\gamma(\overline{\co}\Omega)$ for any bounded subset $\Omega$ of $E$. The following example of MNC is of particular importance: given a bounded $\Omega\subset E$,
\[\beta(\Omega):=\inf\{\eps>0:\Omega\mbox{ admits a finite covering by balls of a radius }\eps\}\]
is the Hausdorff MNC. Recall that this measure is regular, i.e. $\beta(\Omega)=0$ iff $\Omega$ is relatively compact; monotone, i.e. if $\Omega\subset\Omega'$ then $\beta(\Omega)\<\beta(\Omega')$ and non-singular, i.e. $\beta({a}\cup\Omega)=\beta(\Omega)$ for any $a\in E$ (for details see \cite{sadovski}). Concerning behaviour of the Hausdorff MNC towards the integration process we have the following result (direct consequence of \cite[Cor.3.1]{heinz}).
\begin{theorem}\label{3}
Suppose that the set of functions $\{w_n\}_\n\subset L^1([a,b],E)$ is integrably bounded by a function $\mu\in L^1([a,b],\R{})$. Then the mapping $t\mapsto\beta(\{w_n(t)\}_\n)$ is Lebesgue integrable and for all $t\in [a,b]$ the following estimations hold:
\begin{itemize}
\item[(i)] $\beta\left(\left\{\int_0^tw_n(t)\,dt\right\}_\n\right)\<2\int_0^t\beta(\{w_n(t)\}_\n)\,dt$ if $E$ is an arbitrary Banach space,
\item[(ii)] $\beta\left(\left\{\int_0^tw_n(t)\,dt\right\}_\n\right)\<\int_0^t\beta(\{w_n(t)\}_\n)\,dt$ if $E$ is a separable Banach space.
\end{itemize}
\end{theorem}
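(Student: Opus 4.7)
The plan is to derive both (i) and (ii) directly from the cited Corollary~3.1 of Heinz, once a preliminary measurability assertion is in place.

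First I would verify that $\varphi(t):=\beta(\{w_n(t)\}_\n)$ is Lebesgue measurable. Strong measurability of each $w_n$ gives an essentially separable range, so off a null set every $w_n(t)$ lies in a common separable closed subspace $E_0\subset E$. Since $\beta$ is $1$-Lipschitz with respect to the Hausdorff distance on bounded subsets of $E_0$, each finite truncation $\varphi_N(t):=\beta(\{w_1(t),\ldots,w_N(t)\})$ is measurable, and $\varphi_N(t)\nearrow\varphi(t)$ as $N\to\infty$ by the standard monotone-limit property of $\beta$ along increasing unions (which itself follows quickly from the infimum definition of $\beta$ together with its monotonicity). Hence $\varphi$ is measurable, and the pointwise bound $0\<\varphi(t)\<\mu(t)$ gives integrability.

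Second, I would apply Heinz's Corollary~3.1 to the restricted family $(w_n|_{[0,t]})_\n$, integrably bounded by $\mu|_{[0,t]}$. In the general Banach-space case this produces the factor $2$ in (i); the improved factor $1$ in (ii) is the known refinement in the separable setting, where Bochner-integral approximations by simple functions can be performed inside $E_0$ without the convex-hull step needed in full generality. Since this works for every $t\in[a,b]$, assertions (i) and (ii) follow in tandem with the integrability of $\varphi$.

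The main, and really only, obstacle is the measurability step; modulo that, the integral inequalities are an immediate citation. A minor point worth observing is that Heinz's result is ordinarily stated for countable families, which matches the present setup exactly, so no recourse to arbitrary bounded sets or to the related Kuratowski measure is needed; likewise, the passage from the fixed ambient interval $[a,b]$ in Heinz's statement to a variable upper limit $t$ is harmless, since integrable boundedness is hereditary under restriction to subintervals.
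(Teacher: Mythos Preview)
The paper does not prove this theorem at all; it merely states it as a ``direct consequence of \cite[Cor.3.1]{heinz}''. Your strategy of invoking Heinz's corollary is therefore exactly the paper's approach, and the inequalities (i) and (ii) are indeed just a citation once measurability is in hand.

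However, your measurability argument contains a genuine error. You assert that the finite truncations $\varphi_N(t)=\beta(\{w_1(t),\ldots,w_N(t)\})$ increase to $\varphi(t)=\beta(\{w_n(t)\}_\n)$ by a ``monotone-limit property of $\beta$ along increasing unions''. But $\beta$ vanishes on every finite set (finite sets are compact), so $\varphi_N\equiv 0$ for every $N$, whereas $\varphi$ can be strictly positive---for instance, if $w_n(t)=e_n$ for an orthonormal sequence in a Hilbert space. The Hausdorff MNC is \emph{not} continuous along increasing unions: the example $A_N=\{e_1,\ldots,e_N\}$ with $\beta(A_N)=0$ but $\beta(\bigcup_N A_N)>0$ shows this directly. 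Thus the limit you compute is $0$, not $\varphi$, and the argument collapses.

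The measurability of $t\mapsto\beta(\{w_n(t)\}_\n)$ is in fact part of what Heinz establishes (this is why the paper can cite the corollary without further comment), so the simplest fix is to absorb the measurability claim into the citation rather than attempt a separate proof. If you want an independent argument, the usual route is to approximate each $w_n$ by simple functions inside the separable subspace $E_0$ and use a direct $\varepsilon$-covering estimate, not a finite-truncation limit.
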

\par A set-valued map $F\colon E\map E$ is condensing relative to MNC $\gamma$ (or $\gamma$-condensing) provided, for every bounded $\Omega\subset E$, the set $F(\Omega)$ is bounded and $\gamma(\Omega)\<\gamma(F(\Omega))$ implies relative compactness of $\Omega$. \par Recall that for $\Omega$ bounded in $C([a,b],E)$ the expression
\[\modulus_C(\Omega)=\lim\limits_{\xi\to 0^+}\sup\limits_{x\in\Omega}\max\limits_{|t-\tau|\<\xi}|x(t)-x(\tau)|\]
defines a MNC on the space $C([a,b],E)$ (\cite[Ex.2.1.2.]{zecca}). Suppose that:
\begin{equation}\label{measure}
\nu_L(\Omega)=\max\limits_{D\in\Delta(\Omega)}\left(\sup\limits_{t\in[a,b]}e^{-Lt}\beta(D(t)),\modulus_C(D)\right),
\end{equation}
where $\Omega$ is bounded in $C([a,b],E)$, $L\in\R{}$ and $\Delta(\Omega)$ stands for the family of countable subsets of $\Omega$. So defined function $\nu_L$ constitutes a MNC on the space $C([a,b],E)$ with values in the closed cone $\R{2}_+$ (\cite[Ex.2.1.4.]{zecca}). It is easy to see that the measure $\nu_L$ has all the properties quoted in the mentioned below Darbo-Sadovskij-type fixed point theorem for condensing admissible maps (it is a slight generalization of \cite[Th.59.12]{gorn}).
\begin{theorem}\label{4}
Let $\gamma$ be a monotone semiadditive semi-homogeneous regular and algebraically semiadditive MNC defined on bounded subsets of the space $E$ and attaining values in some closed cone in a Banach space. Suppose that $C$ is nonempty closed convex and bounded subset of the space $E$. If $F\colon C\map C$ is an admissible $\gamma$-condensing multivalued map, then $F$ has a fixed point.
\end{theorem}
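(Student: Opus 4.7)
The plan is to carry out the classical Darbo--Sadovskij minimal-invariant-set construction to produce a compact convex $F$-invariant subset of $C$, and then invoke a known fixed point theorem for admissible selfmaps of compact convex sets.

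First I would fix an arbitrary $x_0\in C$ and consider the family $\mathcal{K}$ of all closed convex subsets $K\subset C$ satisfying $x_0\in K$ and $F(K)\subset K$; the set $C$ itself lies in $\mathcal{K}$, so the family is nonempty. Set $K_0:=\bigcap_{K\in\mathcal{K}}K$. Since $F(K_0)\subset F(K)\subset K$ for every $K\in\mathcal{K}$, one has $F(K_0)\subset K_0$, so $K_0$ is the minimal member of $\mathcal{K}$. Put $K_1:=\overline{\co}(F(K_0)\cup\{x_0\})$: the invariance and convexity of $K_0$ give $K_1\subset K_0$, while $F(K_1)\subset F(K_0)\subset K_1$ shows $K_1\in\mathcal{K}$. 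Minimality then forces the key identity
\[K_0=\overline{\co}(F(K_0)\cup\{x_0\}).\]

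Next I would apply $\gamma$ to both sides of this identity. The defining property $\gamma(\overline{\co}\Omega)=\gamma(\Omega)$ of a measure of non-compactness, combined with the non-singularity implied by algebraic semiadditivity (adjoining a single point does not change $\gamma$), yields $\gamma(K_0)\<\gamma(F(K_0))$. The $\gamma$-condensing hypothesis then forces $K_0$ to be relatively compact, and since $K_0$ is closed in the Banach space $E$, it is actually compact.

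Finally, the restriction $F|_{K_0}\colon K_0\map K_0$ is an admissible multivalued selfmap of the compact convex set $K_0$, and the Eilenberg--Montgomery-type fixed point theorem for admissible mappings on compact convex subsets of a Banach space (the compact content of \cite[Th.59.12]{gorn}) yields a point $x\in K_0\subset C$ with $x\in F(x)$. The step requiring the most care, and the main obstacle, is the compactness argument: one must verify that the precise list of axioms imposed on $\gamma$ (monotone, semiadditive, semi-homogeneous, regular, algebraically semiadditive, with values in a closed cone of a Banach space) really suffices both for the computation $\gamma(K_0)=\gamma(F(K_0))$ and for meaningfully interpreting the condensing inequality in the partial order induced by that cone; this is exactly where the ``slight generalization'' over \cite[Th.59.12]{gorn} does real work.
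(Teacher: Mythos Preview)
The paper does not actually prove this theorem: it is stated as a known result, described parenthetically as ``a slight generalization of \cite[Th.59.12]{gorn}'', with no argument supplied. Your outline is precisely the standard Sadovskij minimal-invariant-set construction that underlies the cited result, so there is nothing in the paper to compare against; your own caveat that the listed axioms on $\gamma$ must be checked to deliver non-singularity and a meaningful order comparison in the cone is exactly the point where the ``slight generalization'' lives, and is appropriately flagged.
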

\par If $H(\cdot)$ denotes any continuous (co)homology functor with coefficients in the field of rational numbers $\varmathbb{Q}$ (for instance, the \v Cech homology $\check{H}_*$ or cohomology $\check{H}^*$ with compact carriers), then the space $X$ having the property
\[H_q(X)=
\begin{cases}
0&\mbox{for }q\geqslant 1,\\
\varmathbb{Q}&\mbox{for }q=0
\end{cases}\]
is called acyclic. In other words its homology are exactly the same as the homology of a one point space. A compact (nonempty) space $X$ is an $R_\delta$-set if there is a decreasing sequence of contractible compacta $(X_n)_\n$ containing $X$ as a closed subspace such that $X=\bigcap_\n X_n$ (compare \cite{hyman}). In particular, $R_\delta$-sets are acyclic. 
\par Let us move on to the key issue of the assumptions, on which the results of this paper are based. Assume that $p$ is a real number from the interval $[1,\infty)$. Fix a compact segment $I=[0,T]$ for some end time $T>0$. Let $F\colon I\times E\map E$ be a set-valued map. We will use the following hypotheses on the mapping $F$:
\begin{itemize}
\item[$(F_1)$] for every $t\in I$, $x\in E$ the set $F(t,x)$ is nonempty, closed and convex; when $p=1$, then $F(t,x)$ is additionally a weakly compact set for almost all $t\in I$ and for all $x\in E$,
\item[$(F_2)$] the map $F(\cdot,x)$ has a strongly measurable selection for every $x\in E$,
\item[$(F_3)$] the map $F(t,\cdot)$ is upper hemicontinuous for almost all $t\in I$,
\item[$(F_4)$] there is $c\in L^p(I,\R{})$ such that $||F(t,x)||^+=\sup\{|y|\colon y\in F(t,x)\}\<c(t)(1+|x|)$ for almost all $t\in I$ and for all $x\in E$,
\item[$(F_5)$] there is a function $\eta\in L^p(I,\R{})$ such that for all bounded subsets $\Omega\subset E$ and for almost all $t\in I$ the inequality holds \[\beta(F(t,\Omega))\<\eta(t)\beta(\Omega).\]
\end{itemize}
\par Denote by $\bigtriangleup$ the set $\{(t,s)\in I\times I\colon 0\<s\<t\<T\}$. We shall also assume that the mapping $k\colon\bigtriangleup\to{\mathcal L}(E)$ possesses the following properties:
\begin{itemize}
\item[$(k_1)$] the function $k(\cdot,s)\colon[s,T]\to{\mathcal L}(E)$ is differentiable for every $s\in I$,
\item[$(k_2)$] the function $k(t,\cdot)\colon[0,t]\to{\mathcal L}(E)$ is continuous for all $t\in I$,
\item[$(k_3)$] the function $k(\cdot,\cdot)\colon\{(t,t)\colon t\in I\}\to{\mathcal L}(E)$ is continuous, whereas the operator $k(t,t)$ is invertible for all $t\in I$,
\item[$(k_4)$] there exists a function $\psi\in L^q(I,\R{})$ such that $q^{-1}+p^{-1}=1$ and for every $(t,s)\in\bigtriangleup$ we have $\left\Arrowvert\frac{\partial}{\partial t}k(t,s)\right\Arrowvert_{{\mathcal L}}\<\psi(s)$,
\item[$(k_5)$] for every $t\in I$, $k(t,\cdot)\in L^q([0,t],{\mathcal L}(E))$, where $q^{-1}+p^{-1}=1$,
\item[$(k_6)$] the function $I\ni t\mapsto k(t,\cdot)\in L^q([0,t],{\mathcal L}(E))$ is continuous in the norm $||\cdot||_q$ of the space $L^q(I,{\mathcal L}(E))$,
\item[$(k_7)$] the operator $k(t,s)$ is completely continuous for all $(t,s)\in\bigtriangleup$.
\end{itemize}
\begin{remark}
In view of the mean value theorem it is clear that
\[||k(t,s)||_{{\mathcal L}}\<||k(s,s)||_{{\mathcal L}}+\sup_{\xi\in[s,t]}\left\Arrowvert\frac{\partial}{\partial t}k(\xi,s)\right\Arrowvert_{{\mathcal L}}(t-s)\] for every $(t,s)\in\bigtriangleup$. From this
\[||k(t,s)||_{{\mathcal L}}^q\<2^{q-1}\sup_{z\in I}||k(z,z)||_{{\mathcal L}}^q+2^{q-1}T^q\psi(s)^q,\] by the conditions $(k_3)$-$(k_4)$. Applying Lebesgue dominated convergence theorem we arrive at the conclusion that the set of assumptions $(k_1)$-$(k_4)$ is significantly stronger than conditions $(k_5)$-$(k_6)$.
\end{remark}
\par Recall that the Nemtyskij operator $N_F^p\colon C(I,E)\map L^p(I,E)$, corresponding to $F$, is a multivalued map defined by
\[N_F^p(x)=\{w\in L^p(I,E)\colon w(t)\in F(t,x(t))\mbox{ for almost all }t\in I\}.\]
Denote by $V\colon L^p(I,E)\to C(I,E)$ the following classical Volterra integral operator:
\begin{equation}\label{volterra}
V(w)(t)=\int_0^t k(t,s)w(s)ds,\;t\in I.
\end{equation}
To justify above-mentioned definition or to ensure the continuity of the operator $V$ it is enough to impose conditions $(k_5)$-$(k_6)$ on the kernel $k$. This plain observation rests on the H\"older inequality. \par Considerations of this part of the study are devoted to integral inclusions of the following form:
\begin{equation}\label{inclusion}
x(t)\in h(t)+\int_0^tk(t,s)F(s,x(s))\,ds,\;t\in I,
\end{equation}
where $h\in C(I,E)$ and $p\in[1,\infty)$ are fixed. By a solution of this inclusion we mean a function $x\in C(I,E)$, which satisfies equation
\[x(t)=h(t)+\int_0^tk(t,s)w(s)\,ds,\;t\in I\]
for some $w\in L^p(I,E)$ such that $w(t)\in F(t,x(t))$ for almost all $t\in I$.
\par Preparing the ground for the proof of theorems describing the structure of the set $S^p_{\!F}$ of solutions to integral inclusion \eqref{inclusion} we will justify several auxiliary statements. The first is a lemma summarizing the properties of the Niemytskij operator. It should be noted that the scheme of proof of this fact is known to a large extent and occurs, in the context of the case $p = 1$, in the paper \cite{bothe}.
\begin{proposition}\label{Niemytskij}
If the map $F$ fulfills conditions $(F_1)$-$(F_4)$ and the space $E$ is reflexive for $p\in(1,\infty)$, then the operator $N_F^p$ has nonempty convex weakly compact values and it is a weakly upper semicontinuous multivalued map.
\end{proposition}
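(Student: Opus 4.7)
The plan is to verify nonemptiness and convexity first, then weak compactness of values, and finally weak upper semicontinuity via the sequential criterion from \cite[Prop.2(b)]{bothe} quoted in the preliminaries. Throughout, the key tool will be the combination of the growth bound $(F_4)$ with either reflexivity of $L^p(I,E)$ (for $p>1$) or the Dunford--Pettis style Theorem~\ref{2} (for $p=1$), together with a Mazur-type argument that passes from weak convergence in $L^p$ to a.e. convergence of convex combinations, where the upper hemicontinuity $(F_3)$ can be exploited.

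For \textbf{nonemptiness}, fix $x\in C(I,E)$. Because $x(I)$ is compact, we can approximate $x$ uniformly by a sequence $(x_n)$ of simple functions $x_n=\sum_i y_i^n\chi_{A_i^n}$. Gluing the selections provided by $(F_2)$ on each $A_i^n$ yields strongly measurable $w_n$ with $w_n(t)\in F(t,x_n(t))$; by $(F_4)$ these are bounded in $L^p(I,E)$ by $\|c\|_p(1+\|x\|+1)$. If $p>1$ then reflexivity of $E$ makes $L^p(I,E)$ reflexive, so a subsequence $w_{k_n}\rightharpoonup w$. If $p=1$, I verify the hypotheses of Theorem~\ref{2}: the integrable bound is clear, and for a.e. $t$, since $F(t,\cdot)$ is upper hemicontinuous with weakly compact convex values, it is weakly upper semicontinuous by \cite[Th.1.4.2]{aubin}, so the sequential criterion \cite[Prop.2(b)]{bothe} ensures $\{w_n(t)\}$ is relatively weakly compact in $E$. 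Either way I extract a weak limit $w\in L^p(I,E)$ and must show $w(t)\in F(t,x(t))$ a.e. For this I apply Mazur's lemma to obtain convex combinations $\widetilde w_n\to w$ in norm, pass to a subsequence converging a.e., and estimate, for every $p\in E^*$,
\[
\langle p,\widetilde w_n(t)\rangle\<\sup_{m\geqslant n}\sigma(p,F(t,x_m(t))).
\]
Letting $n\to\infty$ and using the upper semicontinuity of $\sigma(p,F(t,\cdot))$ from $(F_3)$ gives $\langle p,w(t)\rangle\<\sigma(p,F(t,x(t)))$, and since $F(t,x(t))$ is closed and convex this forces $w(t)\in F(t,x(t))$.

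\textbf{Convexity} of $N_F^p(x)$ is immediate from convexity of $F(t,x(t))$. For \textbf{weak compactness}, note that $N_F^p(x)$ is norm bounded in $L^p(I,E)$ by $(F_4)$; in the reflexive case this already gives relative weak compactness, while for $p=1$ the same Theorem~\ref{2} argument as above applies since $F(t,x(t))$ is weakly compact by $(F_1)$. Weak closedness follows by repeating the Mazur plus a.e. plus closed-convex-values argument of Step 1 (with the simpler situation in which the first argument of $F$ is fixed at $x(t)$).

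For \textbf{weak upper semicontinuity}, I invoke the sequential characterization: with convex weakly compact values already established, it suffices to show that whenever $x_n\to x$ in $C(I,E)$ and $w_n\in N_F^p(x_n)$, some subsequence $w_{k_n}\rightharpoonup w\in N_F^p(x)$. Boundedness $\|x_n\|\<M$ gives $|w_n(t)|\<c(t)(1+M)$ a.e.; the extraction of a weakly convergent subsequence is carried out exactly as in Step~1 (reflexivity of $L^p(I,E)$ for $p>1$; Theorem~\ref{2} for $p=1$, where the pointwise weak relative compactness of $\{w_n(t)\}\subset F(t,\{x_n(t)\})$ follows from weak upper semicontinuity of $F(t,\cdot)$ with weakly compact values at $x(t)$). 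The verification that $w(t)\in F(t,x(t))$ a.e. is the same Mazur argument displayed above. The main obstacle throughout is precisely this last step: avoiding the time dependence of $F$ in Theorem~\ref{convergence} by carrying out a Mazur reduction to a.e. convergence and then applying $(F_3)$ pointwise via the support function.
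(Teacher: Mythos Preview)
Your proof is correct and follows essentially the same route as the paper: approximate $x$ by step functions, use $(F_2)$ to build selections $w_n$, extract a weakly convergent subsequence in $L^p(I,E)$ (via reflexivity for $p>1$ and Theorem~\ref{2} for $p=1$), and then pass to the limit to land in $N_F^p(x)$. The only difference is that where the paper invokes Theorem~\ref{convergence} as a black box for the last step, you carry out its proof inline via Mazur's lemma and the support-function inequality; this is not a genuinely different argument but simply an unpacking of the convergence theorem, and it has the mild advantage of sidestepping the cosmetic mismatch that Theorem~\ref{convergence} is stated for a $t$-independent $F$.
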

\begin{proof} 
For any $x\in C(I,E)$ there is a sequence $(x_n)_\n$ of step functions, converging uniformly to $x$ on $I$. Accorgingly to the assumption $(F_2)$ we can indicate a strongly measurable map $w_n$ such that $w_n(t)\in F(t,x_n(t))$, i.e. $(x_n(t),w_n(t))\in\Graph(F(t,\cdot))$, for almost all $t\in I$. Using condition $(F_4)$ we get \[|w_n(t)|\<||F(t,x_n(t))||\<c(t)(1+\sup_\n||x_n||)\] almost everywhere in $I$. Thus $w_n\in N_F^p(x_n)$. \par Suppose $p=1$. Then $F(t,\cdot)$ is a weakly upper semicontinuous map for almost all $t\in I$. Taking into account that $\{w_n(t)\}_\n\subset F(t,\overline{\{x_n(t)\}}_\n)$ for almost all $t\in I$, we infer that the sequence $(w_n)_\n$ is relatively weakly compact in view of Theorem \ref{2}. If $w$ is a weak limit of some subsequence of $(w_n)_\n$, then by Theorem \ref{convergence}., $w(t)\in F(t,x(t))$ for almost all $t\in I$. Therefore $w\in N_F^1(x)$. \par In case $p\in(1,\infty)$ observe that $(w_n)_\n$ is a bounded sequence in the reflexive space $L^p(I,E)$ (keeping in mind that $E$ is reflexive, apply duality theorem (see \cite[Th.IV.1.1]{uhl})). By Eberlein-\u Smulian theorem $(w_n)_\n$ converges weakly to some $w\in L^p(I,E)$ (with an accuracy of a subsequence). Using convergence theorem we see that $w\in N_F^p(x)$. \par In this way we proved that the Niemytskij operator has nonempty values. Applying similar reasoning one proves weak upper semicontinuity of $N_F^p$.
\end{proof}
A separate issue is the case of a single-valued Niemytskij operator $N_{\!f}^p$, corresponding to a mapping $f\colon I\times E\to E$. With regard to function $f$ we will assume what follows:
\begin{itemize}
\item[$(f_1)$] the map $f(\cdot,x)$ is strongly measurable for every $x\in E$,
\item[$(f_2)$] there is $c\in L^p(I,\R{})$ such that $|f(t,x)|\<c(t)(1+|x|)$ for almost all $t\in I$ and for all $x\in E$,
\item[$(f_3)$] the map $f(t,\cdot)\colon E\to E$ is continuous for almost all $t\in I$,
\item[$(f_4)$] the map $f(t,\cdot)$ is weakly continuous for almost all $t\in I$, i.e. $f(t,\cdot)\colon E\to E_w$ is continuous, where $E_w$ stands for $E$ endowed with the weak topology,
\item[$(f_5)$] there is a function $\eta\in L^p(I,\R{})$ such that for all bounded subsets $\Omega\subset E$ and for almost all $t\in I$ the inequality holds \[\beta(f(t,\Omega))\<\eta(t)\beta(\Omega).\]
\end{itemize}
\begin{proposition}
Assume that conditions $(f_1)$, $(f_2)$, $(f_4)$ are fulfilled and the dual space $E^*$ has the Radon-Nikod\'ym property. Then the Niemytskij operator $N_{\!f}^p$ is weakly continuous. If the mapping $f$ satisfies conditions $(f_1)$-$(f_3)$, then the operator $N_{\!f}^p$ is simply continuous.
\end{proposition}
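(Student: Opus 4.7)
The plan is to reduce both statements to the Lebesgue dominated convergence theorem applied in the space $L^p(I,E)$, invoking the Radon--Nikod\'ym hypothesis only in the weak case, where it is needed to translate weak convergence in $L^p(I,E)$ into convergence of scalar integrals against elements of the dual.

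I would start by verifying that $N_{\!f}^p$ is well defined. For $x\in C(I,E)$ the range $x(I)$ is compact, hence separable, so uniformly approximating $x$ by simple functions $(x_n)_\n$ taking values in a countable dense subset of $x(I)$ gives a sequence $(f(\cdot,x_n(\cdot)))_\n$ of strongly measurable maps by $(f_1)$. Under either $(f_3)$ or $(f_4)$, this sequence converges for a.e.\ $t$ (strongly or weakly in $E$, respectively) to $f(t,x(t))$; combined with the separability of the closed subspace generated by $f(I\times x(I))$, Pettis' measurability theorem yields strong measurability of $f(\cdot,x(\cdot))$. Integrability then follows from $(f_2)$.

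For the weak-continuity claim I would take $x_n\to x$ in $C(I,E)$; condition $(f_4)$ gives $f(t,x_n(t))\rightharpoonup f(t,x(t))$ in $E$ for a.e.\ $t\in I$. Because $E^*$ has the Radon--Nikod\'ym property, the duality theorem \cite[Th.IV.1.1]{uhl} identifies $L^p(I,E)^*$ with $L^q(I,E^*)$, where $q^{-1}+p^{-1}=1$ (with $q=\infty$ when $p=1$). For an arbitrary $\phi\in L^q(I,E^*)$, the scalar integrands $\langle\phi(t),f(t,x_n(t))\rangle$ converge pointwise a.e., and $(f_2)$ together with H\"older's inequality yields the uniform $L^1$-dominant $|\phi(t)|c(t)(1+\sup_n\|x_n\|)$. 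Dominated convergence then produces convergence of the integrals, which is precisely weak convergence of $N_{\!f}^p(x_n)$ to $N_{\!f}^p(x)$ in $L^p(I,E)$.

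For the strong-continuity statement under $(f_1)$--$(f_3)$, the same sequence gives $f(t,x_n(t))\to f(t,x(t))$ strongly a.e.\ by $(f_3)$, and the $p$-th powers of the differences are dominated by $(2c(t)(1+\sup_n\|x_n\|))^p\in L^1(I,\R{})$, so the scalar dominated convergence theorem immediately yields $\|N_{\!f}^p(x_n)-N_{\!f}^p(x)\|_p\to 0$. The only substantive ingredient in the whole argument is the duality identification of $L^p(I,E)^*$ with $L^q(I,E^*)$, which is exactly why the Radon--Nikod\'ym assumption enters in the weak case but is dispensable for norm continuity; the measurability check sketched above is the only place where genuine care is required, the remainder being routine dominated-convergence bookkeeping.
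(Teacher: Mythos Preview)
Your argument is correct and follows essentially the same route as the paper's own proof: for weak continuity you invoke the duality $L^p(I,E)^*\cong L^q(I,E^*)$ (via the Radon--Nikod\'ym property of $E^*$) and then apply dominated convergence to the scalar integrands $\langle\phi(t),f(t,x_n(t))\rangle$, while for norm continuity you apply dominated convergence directly to $|f(t,x_n(t))-f(t,x(t))|^p$. The only addition relative to the paper is your preliminary verification that $N_{\!f}^p$ is well defined (strong measurability of $f(\cdot,x(\cdot))$ via Pettis), which the paper leaves implicit; your sketch there is fine once one notes that the weak closure of the separable subspace generated by the essentially separable ranges of the approximants $f(\cdot,x_n(\cdot))$ coincides with its norm closure, so the weak limit remains essentially separably valued.
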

\begin{proof}
Suppose that the mapping $f(t,\cdot)$ is weakly continuous for almost all $t\in I$ (assumption $(f_4)$) and the dual space $E^*$ has the Radon-Nikod\'ym property. Let $x_n\to x$ in $C(I,E)$. Denote $w_n=N_{\!f}^p(x_n)$ and $w=N_{\!f}^p(x)$. We claim that almost everywhere weakly convergent functional sequence $(w_n)_\n$ is in fact convergent in the weak topology of the space $L^p(I,E)$. Indeed, from duality theorem (\cite[Th.IV.1.1]{uhl}) it follows that for every functional $\xi\in L^p(I,E)^*$ there exists such a function $g\in L^q(I,E^*)$ that \[\langle \xi,w_n\rangle=\int_0^T\langle w_n(t),g(t)\rangle\,dt.\] Knowing that $w_n(t)\rightharpoonup w(t)$ for almost all $t\in I$, we get $\langle w_n(t),g(t)\rangle\xrightarrow[n\to\infty]{}\langle w(t),g(t)\rangle$ almost everywhere in $I$. Observe that $||g(\cdot)||_{E^*}c(\cdot)\in L^1(I,\R{})$, because $p^{-1}+q^{-1}=1$. At the same time \[|\langle w_n(t),g(t)\rangle|\<||g(t)||_{E^*}|w_n(t)|\<||g(t)||_{E^*}c(t)(1+\sup_\n||x_n||)\] for almost all $t\in I$ and for every $\n$. Thus, Lebesgue dominated convergence theorem implies \[\int_0^T\langle w_n(t),g(t)\rangle\,dt\xrightarrow[n\to\infty]{}\int_0^T\langle w(t),g(t)\rangle\,dt.\] Actually we have shown that $\langle \xi,w_n\rangle\xrightarrow[n\to\infty]{}\langle \xi,w\rangle$ for every $\xi\in L^p(I,E)^*$, i.e. $w_n\rightharpoonup w$ in $L^p(I,E)$. In this case operator $N_{\!f}^p$ is weakly continuous. \par If condition $(f_4)$ is satisfied, then there is a convergence $f(t,x_n(t))\to f(t,x(t))$ for almost all $t\in I$. Using integral boundedness of the sequence $(f(\cdot,x_n(\cdot)))_\n$ (condition $(f_2)$) and Lebesgue dominated convergence theorem we see that \[\int_0^T|f(t,x_n(t)-f(t,x(t))|^p\,dt\xrightarrow[n\to\infty]{}0,\] i.e. $N_{\!f}^p(x_n)\to N_{\!f}^p(x)$ as $n\to\infty$.
\end{proof}
\par The set $S_{\!F}^p$ of solutions to integral inclusion under consideration obviously coincides with the set of fixed points of the operator ${\mathcal F}\colon C(I,E)\map C(I,E)$, given by
\begin{equation}\label{calF}
{\mathcal F}(x)=h(x)+V\circ N_F^p(x).
\end{equation}
\begin{lemma}\label{calFlem}
Let $p\in[1,\infty)$, while the space $E$ is reflexive for $p\in(1,\infty)$. Assume that either conditions $(F_1)$-$(F_5)$ and $(k_5)$-$(k_6)$ or $(F_1)$-$(F_4)$ and $(k_5)$-$(k_7)$ are satisfied. Then the operator ${\mathcal F}$ is acyclic and condensing relative to MNC $\nu_L$, for some $L>0$. 
\end{lemma}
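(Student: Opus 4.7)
The plan is to verify the two parts of the claim separately: first that $\mathcal F$ is acyclic (upper semicontinuous with compact acyclic values), then that it is $\nu_L$-condensing for some $L > 0$.

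For acyclicity, each value $\mathcal F(x) = h + V(N_F^p(x))$ is an affine image of a convex set (by Proposition \ref{Niemytskij} and linearity of $V$), hence convex and in particular acyclic. To prove compactness of $\mathcal F(x)$ in $C(I,E)$ I fix $x$, take an arbitrary sequence $w_n \in N_F^p(x)$, and extract a subsequence of $(V(w_n))$ convergent in $C(I,E)$ via Arzel\`a-Ascoli. Equicontinuity of $\{V(w_n)\}_\n$ follows from H\"older's inequality applied to the splitting
\[V(w_n)(t') - V(w_n)(t) = \int_0^{t}(k(t',s) - k(t,s))w_n(s)\,ds + \int_t^{t'}k(t',s)w_n(s)\,ds \qquad (t<t'),\]
using $(k_6)$ together with the integrable bound $|w_n(s)| \leqslant c(s)(1+|x(s)|)$ from $(F_4)$. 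For pointwise relative compactness of $\{V(w_n)(t)\}_\n$ I invoke Theorem \ref{3}: under $(F_5)$ one has $\beta(\{w_n(s)\}_\n) \leqslant \eta(s)\beta(\{x(s)\}) = 0$ since $x(s)$ is a singleton, while under $(k_7)$ the set $\{k(t,s)w_n(s)\}_\n$ is the image of a pointwise bounded set by a completely continuous operator and is therefore relatively compact. In either case Theorem \ref{3} yields $\beta(\{V(w_n)(t)\}_\n) = 0$, where in the $(F_5)$ scenario the sharper bound (ii) applies after restricting to the separable closed linear span of the countable data. Upper semicontinuity of $\mathcal F$ is then handled by the same scheme: given $x_n \to x$ in $C(I,E)$ and $y_n = h + V(w_n) \in \mathcal F(x_n)$, the sequence $\{w_n\}$ is bounded in $L^p(I,E)$ and hence weakly convergent along a subsequence (by Theorem \ref{2} for $p = 1$ in view of the weak compactness clause of $(F_1)$, and by reflexivity of $L^p(I,E)$ for $p > 1$); the weak limit $w$ lies in $N_F^p(x)$ by Proposition \ref{Niemytskij}, and the equicontinuity together with the $\beta$-estimates upgrade the weak convergence to strong convergence of $V(w_n)$ to $V(w)$ in $C(I,E)$.

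For $\nu_L$-condensivity under the alternative set of hypotheses $(F_1)$-$(F_4)$ and $(k_5)$-$(k_7)$ the statement is essentially automatic: the above arguments actually show that $\mathcal F$ maps any bounded $\Omega \subset C(I,E)$ into a relatively compact set, so every $L > 0$ works. Under $(F_1)$-$(F_5)$ and $(k_5)$-$(k_6)$ a Bielecki-type weight is required. Let $\Omega \subset C(I,E)$ be bounded, fix $D = \{x_n\}_\n \in \Delta(\Omega)$, and choose $w_n \in N_F^p(x_n)$. Equicontinuity of $\{V(w_n)\}_\n$ gives $\modulus_C(\mathcal F(D)) = 0$, while Theorem \ref{3}, combined with monotonicity of $\beta$ and $(F_5)$, yields
\[e^{-Lt}\beta(\{V(w_n)(t)\}_\n) \leqslant 2\sup_{s \in I} e^{-Ls}\beta(D(s))\cdot \int_0^t e^{-L(t-s)}||k(t,s)||_{\mathcal L}\,\eta(s)\,ds.\]
A further H\"older estimate with conjugate exponents $q,p$ bounds the last integral by $||k(t,\cdot)||_{L^q([0,t],\mathcal L(E))}\bigl(\int_0^t e^{-pL(t-s)}\eta(s)^p\,ds\bigr)^{1/p}$; the first factor is uniformly bounded on $I$ by $(k_5)$-$(k_6)$, and the second tends to $0$ as $L \to \infty$ uniformly in $t$ (split at $s = t-\delta$ and invoke absolute continuity of $\int\eta^p$). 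Choosing $L$ large enough that twice the resulting supremum is strictly less than $1$, the hypothesis $\nu_L(\Omega) \leqslant \nu_L(\mathcal F(\Omega))$ forces $\sup_t e^{-Lt}\beta(D(t)) = 0$ for every $D \in \Delta(\Omega)$, and combined with equicontinuity this yields relative compactness of $\Omega$.

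The main obstacle is treating the two hypothesis regimes in a unified manner, because the $\beta$-bound $\beta(\{V(w_n)(t)\}_\n) = 0$ is powered by genuinely different mechanisms: in the first regime by the regularity condition $(F_5)$, necessitating the Bielecki weight to secure condensivity, and in the second by complete continuity of the kernel from $(k_7)$, which collapses everything to outright compactness of $\mathcal F$. A secondary technical nuisance is that Theorem \ref{3}(ii) assumes separability of $E$, but the countable families in play all live in the separable closed subspace generated by their values, so the sharper estimate still applies after that restriction.
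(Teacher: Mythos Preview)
Your argument is essentially the same as the paper's: upper semicontinuity and compact convex values via equicontinuity from $(k_6)$, the Heinz inequality (Theorem~\ref{3}), and weak sequential compactness of $N_F^p$; then $\nu_L$-condensivity via a Bielecki weight under $(F_5)$, or directly via complete continuity of the kernel under $(k_7)$.

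Two presentational points are worth tightening. First, in the condensing step you fix $D=\{x_n\}\in\Delta(\Omega)$ and produce a particular countable subset $\{h+V(w_n)\}$ of $\mathcal F(\Omega)$; but to bound $\nu_L(\mathcal F(\Omega))$ you must start from an \emph{arbitrary} (or the maximizing) countable subset $\{v_n\}\subset\mathcal F(\Omega)$ and trace back to $u_n\in\Omega$ with $v_n=h+V(w_n)$, $w_n\in N_F^p(u_n)$---this is how the paper proceeds, and it is what makes the comparison $\nu_L(\mathcal F(\Omega))\le c(L)\,\nu_L(\Omega)$ legitimate. Your estimate is the right one, but the quantifiers are written in the wrong order. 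Second, the separability detour is unnecessary and potentially delicate (the Hausdorff MNC depends on the ambient space, so restricting to a subspace changes $\beta$); the paper simply uses the factor~$2$ version of Theorem~\ref{3}(i) throughout, which is all that is needed both for the vanishing of $\beta$ in the usc step and for the contraction constant in the condensing step. Your observation that under $(k_7)$ the operator $\mathcal F$ actually maps bounded sets to relatively compact sets is correct and is a slightly cleaner way to phrase what the paper does at the end of its proof.
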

\begin{proof}
We claim that ${\mathcal F}$ is upper semicontinuous and has nonempty convex compact values. Precisely, we will show that if $x_n\rightrightarrows x$ and $y_n\in {\mathcal F}(x_n)$, then there is a subsequence $(y_{k_n})_\n$ uniformly convergent to $y\in{\mathcal F}(x)$. From Proposition \ref{Niemytskij}. and the fact that $w\mapsto h+V(w)$ is an affine operator it follows that ${\mathcal F}(x)$ is nonempty convex for every $x\in C(I,E)$. Let $x_n\rightrightarrows x$ and $y_n\in{\mathcal F}(x_n)$. Then $y_n=h+V(w_n)$ for some $w_n\in N_F^p(x_n)$. Since the operator $N_F^p$ is weakly upper semicontinuous, there is a subsequence (again denoted by) $(w_n)_\n$ such that $w_n\rightharpoonup w\in N_F^p(x)$. We have the following estimations
\begin{equation}\label{equicon}
\begin{aligned}
|y_n(t)-y_n(\tau)|&=\left|\int_0^t(k(t,s)-k(\tau,s))w_n(s)ds-\int_t^\tau k(\tau,s)w_n(s)ds\right|\\&\<\int_0^t||k(t,s)-k(\tau,s))||_{{\mathcal L}}|w_n(s)|ds+\int_t^\tau ||k(\tau,s)||_{{\mathcal L}}|w_n(s)|ds\\&\<\int_0^t||k(t,s)-k(\tau,s))||_{{\mathcal L}}c(s)\left(1+\sup_\n||x_n||\right)ds\\&+\int_t^\tau||k(\tau,s)||_{{\mathcal L}}c(s)\left(1+\sup_\n||x_n||\right)ds\\&\<\left(1+\sup_\n||x_n||\right)\left(||k(t,\cdot)-k(\tau,\cdot)||_q||c||_p\!+\sup\limits_{t\in I}||k(t,\cdot)||_q\left(\int_t^\tau c(s)^pds\right)^{\frac{1}{p}}\right).
\end{aligned} 
\end{equation}
Assumption $(k_6)$ and the absolute continuity of the Lebesgue integral implies equicontinuity of the family $\{y_n\}_\n$. From Theorem \ref{3}. and assumption $(F_5)$ it follows:
\begin{equation}\label{analog}
\begin{aligned}
\beta(\{y_n(t)\}_\n)&=\beta\left(\left\{h(t)+\int_0^t k(t,s)w_n(s)ds\right\}_\n\right)\<2\int_0^t\beta(k(t,s)\{w_n(s)\}_{n\geqslant 1})ds\\&\<2\int_0^t||k(t,s)||_{{\mathcal L}}\beta(\{w_n(s)\}_\n)ds\<2\int_0^t||k(t,s)||_{{\mathcal L}}\beta(F(s,\{x_n(s)\}_\n))ds\\&\<2\int_0^t||k(t,s)||_{{\mathcal L}}\eta(s)\beta(\{x_n(s)\}_\n))ds
\end{aligned}
\end{equation}
and therefore $\beta(\{y_n(t)\}_\n)=0$ for every $t\in I$. Applying Arzel\`a theorem we gather that the sequence $(y_n)_\n$ is relatively compact. Let $y_{k_n}\rightrightarrows y$. Recall that in the class of linear operators on normed spaces the norm continuity is equivalent to the weak continuity. So if $w_{k_n}\rightharpoonup w$ in $L^p(I,E)$, then $V(w_{k_n})\rightharpoonup V(w)$. Consequently, $y_{k_n}=h+V(w_{k_n})\rightharpoonup h+V(w)$. At the same time $y_{k_n}\rightharpoonup y$, that is $y\in{\mathcal F}(x)$.\par Now we prove that ${\mathcal F}$ is a condensing operator with respect to the MNC $\nu_L$, defined at the point \eqref{measure}. We specify the mapping $\varphi\colon\R{}\to\R{}_+$ by the formula
\begin{equation}\label{fi}
\varphi(L)=\sup\limits_{t\in I}e^{-Lt}\left(\int_0^t(\eta(s)e^{Ls})^p ds\right)^{\frac{1}{p}}.
\end{equation}
It is easy to show that $\varphi(L)\xrightarrow[L\to+\infty]{}0^+$.
Fix $L>0$ so that 
\begin{equation}\label{L}
\varphi(L)<\left(2\sup_{t\in I}||k(t,\cdot)||_q\right)^{-1}.
\end{equation}
Suppose $\Omega\subset C(I,E)$ is bounded set for which the inequality holds
\begin{equation}\label{nierznu}
\nu_L(\Omega)\<\nu_L({\mathcal F}(\Omega)).
\end{equation}
Assume that $\{v_n\}_{n\geqslant 1}\in\Delta({\mathcal F}(\Omega))$ realises measure of the set ${\mathcal F}(\Omega)$, i.e.
\[\nu_L({\mathcal F}(\Omega))=\left(\sup\limits_{t\in I}e^{-Lt}\beta(\{v_n(t)\}_{n\geqslant 1}),\modulus_C(\{v_n\}_{n\geqslant 1})\right).\]
Then there are functions $u_n\in\Omega$ and $w_n\in N_F^p(u_n)$ such that $v_n=h+V(w_n)$. If $\{y_n\}_\n\in\Delta(\Omega)$ is the set where the maximum in the definition of $\nu_L(\Omega)$ is reached then, in accordance with the assumption \eqref{nierznu}, we have
\begin{equation}\label{equ10}
\begin{cases}
\sup\limits_{t\in I}e^{-Lt}\beta(\{y_n(t)\}_\n)\<\sup\limits_{t\in I}e^{-Lt}\beta(\{v_n(t)\}_\n),&\\\modulus_C(\{y_n\}_\n)\<\modulus_C(\{v_n\}_\n).
\end{cases}
\end{equation}
Estimating analogously to inequality \eqref{analog}, we see that
\begin{equation}\label{20}
\beta(\{v_n(t)\}_\n)\<2\!\!\int_0^t\!\!||k(t,s)||_{{\mathcal L}}\eta(s)\beta(\{u_n(s)\}_\n)ds\<\sup\limits_{t\in I}e^{-Lt}\beta(\{u_n(t)\}_\n)\,2\int_0^t||k(t,s)||_{{\mathcal L}}\eta(s)e^{Ls}ds.
\end{equation}
Hence
\[\begin{split}
\sup\limits_{t\in I}e^{-Lt}\beta(\{v_n(t)\}_\n)&\<\sup\limits_{t\in I}e^{-Lt}\beta(\{u_n(t)\}_\n)2\sup\limits_{t\in I}e^{-Lt}\int_0^t||k(t,s)||_{{\mathcal L}}\eta(s)e^{Ls}ds\\&\<\sup\limits_{t\in I}e^{-Lt}\beta(\{u_n(t)\}_\n)2\sup\limits_{t\in I}||k(t,\cdot)||_q\sup\limits_{t\in I}e^{-Lt}\left(\int_0^t(\eta(s)e^{Ls})^pds\right)^{\frac{1}{p}}\\&=2\sup\limits_{t\in I}||k(t,\cdot)||_q \varphi(L)\sup\limits_{t\in I}e^{-Lt}\beta(\{u_n(t)\}_\n)\\&\<2\sup\limits_{t\in I}||k(t,\cdot)||_q \varphi(L)\sup\limits_{t\in I}e^{-Lt}\beta(\{y_n(t)\}_\n).
\end{split}\]
If we now assume that $\sup_{t\in I}e^{-Lt}\beta(\{y_n(t)\}_\n)>0$ then, in view of \eqref{L}, we have
\[\sup\limits_{t\in I}e^{-Lt}\beta(\{v_n(t)\}_\n)<\sup\limits_{t\in I}e^{-Lt}\beta(\{y_n(t)\}_\n).\]
This together with \eqref{equ10} gives a contradiction. Therefore $\sup_{t\in I}e^{-Lt}\beta(\{y_n(t)\}_\n)=0$. Since the set $\Omega$ is bounded, the image ${\mathcal F}(\Omega)$ must be equicontinuous (similarly to \eqref{equicon}). Thus the subset $\{v_n\}_\n\subset{\mathcal F}(\Omega)$ is also equicontinuous, i.e. $\modulus_C(\{v_n\}_\n)=0$. Again using \eqref{equ10} we get: $\modulus_C(\{y_n\}_\n)=0$. Eventually $\nu_L(\Omega)=(0,0)$, which means that $\Omega$ is relatively compact in $C(I,E)$.\par Observe that condition $(F_5)$ is used in the above argumentation only in the estimates \eqref{analog} and \eqref{20}. Modyfing them using the assumption $(k_7)$ we obtain the inequality
\[\beta(\{v_n(t)\}_\n)=\beta\left(\left\{h(t)+\int_0^tk(t,s)w_n(s)\,ds\right\}_\n\right)\<2\int_0^t\beta(k(t,s)\{w_n(s)\}_\n)ds=0\]
for $t\in I$. In particular, this means that $\sup_{t\in I}e^{-Lt}\beta(\{v_n(t)\}_\n)=0$. From assumption \eqref{equ10} we infer that $\sup_{t\in I}e^{-Lt}\beta(\{y_n(t)\}_\n)=0$, proving thereby that operator ${\mathcal F}$ is condensing relative to MNC $\nu_L$.  
\end{proof}
\begin{corollary}\label{calFsing}
Assume that either conditions $(f_1)$-$(f_3)$, $(f_5)$ and $(k_5)$-$(k_6)$ or conditions $(f_1)$-$(f_3)$ and $(k_5)$-$(k_7)$ are satisfied. Then operator ${\mathcal F}=h+V\circ N_{\!f}^p$ is continuous and $\nu_L$-condensing.
\end{corollary}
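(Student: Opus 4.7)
The plan is to transcribe the proof of Lemma \ref{calFlem} to the single-valued setting, where the argument simplifies considerably because there is no need to invoke weak upper semicontinuity of the Niemytskij operator or the convergence theorem.

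First I would establish continuity. Under $(f_1)$-$(f_3)$ the (single-valued) Niemytskij operator $N_{\!f}^p\colon C(I,E)\to L^p(I,E)$ is norm continuous; this is precisely the last assertion of the proposition preceding the corollary. Under $(k_5)$-$(k_6)$, Hölder's inequality shows that the linear Volterra operator $V\colon L^p(I,E)\to C(I,E)$ defined by \eqref{volterra} is bounded, hence continuous. Since composition of continuous maps is continuous and $h\in C(I,E)$ is a fixed translate, the operator ${\mathcal F}=h+V\circ N_{\!f}^p$ is continuous.

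Next I would verify the $\nu_L$-condensing property by repeating the argument of Lemma \ref{calFlem} essentially verbatim. Fix $L>0$ satisfying \eqref{L} with the same function $\varphi$ from \eqref{fi}. Given a bounded $\Omega\subset C(I,E)$ with $\nu_L(\Omega)\leqslant\nu_L({\mathcal F}(\Omega))$, pick a countable $\{v_n\}_\n\in\Delta({\mathcal F}(\Omega))$ realising $\nu_L({\mathcal F}(\Omega))$, write $v_n=h+V(f(\cdot,u_n(\cdot)))$ for some $u_n\in\Omega$, and select $\{y_n\}_\n\in\Delta(\Omega)$ realising $\nu_L(\Omega)$. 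Under $(k_5)$-$(k_6)$ and $(f_5)$, the estimates \eqref{analog} and \eqref{20} remain valid with $F$ replaced by $f$ and $\{w_n(s)\}_\n$ by $\{f(s,u_n(s))\}_\n$, because $(f_5)$ supplies the same bound $\beta(f(s,\{u_n(s)\}_\n))\leqslant\eta(s)\beta(\{u_n(s)\}_\n)$. The ensuing chain of inequalities forces $\sup_{t\in I}e^{-Lt}\beta(\{y_n(t)\}_\n)=0$ via \eqref{L}, and then the equicontinuity of ${\mathcal F}(\Omega)$ (derived exactly as in \eqref{equicon}, with $c$ majorising $|f(\cdot,u_n(\cdot))|$) yields $\modulus_C(\{y_n\}_\n)=0$. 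Hence $\nu_L(\Omega)=(0,0)$ and $\Omega$ is relatively compact.

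Under the alternative set $(f_1)$-$(f_3)$ and $(k_5)$-$(k_7)$, the complete continuity of every operator $k(t,s)$ makes the integrand $k(t,s)f(s,u_n(s))$ pointwise relatively compact (for the bounded sequence $(f(\cdot,u_n(\cdot)))_\n$), so the analogue of the modified estimate at the end of the proof of Lemma \ref{calFlem} gives $\beta(\{v_n(t)\}_\n)=0$ directly, again producing $\nu_L(\Omega)=(0,0)$. I do not anticipate a real obstacle here: the only point requiring a small check is that the growth bound $(f_2)$ plays exactly the role that $(F_4)$ played in the lemma, which guarantees both the equicontinuity estimate \eqref{equicon} and the integrability needed to apply Theorem \ref{3} to the sequence $\{f(\cdot,u_n(\cdot))\}_\n$.
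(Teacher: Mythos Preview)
Your proposal is correct and follows exactly the route the paper intends: the corollary is stated without proof precisely because it is obtained by specialising the proof of Lemma~\ref{calFlem} to the single-valued Niemytskij operator, using the preceding proposition for the norm continuity of $N_{\!f}^p$ and replacing $(F_4)$, $(F_5)$ by $(f_2)$, $(f_5)$ throughout. The only simplification you rightly note is that the weak-compactness machinery (and hence the reflexivity hypothesis for $p>1$) becomes unnecessary.
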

\begin{corollary}
Suppose the dual space $E^*$ possesses the Radon-Nikod\'ym property. Assume that either conditions $(f_1)$-$(f_2)$, $(f_4)$-$(f_5)$ and $(k_5)$-$(k_6)$ or conditions $(f_1)$-$(f_2)$, $(f_4)$ and $(k_5)$-$(k_7)$ are satisfied. Then operator ${\mathcal F}=h+V\circ N_{\!f}^p$ is weakly continuous and $\nu_L$-condensing.
\end{corollary}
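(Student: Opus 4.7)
The plan is a two-step reduction to results already established. For weak continuity, the proposition immediately preceding Corollary \ref{calFsing} shows that under $(f_1)$, $(f_2)$, $(f_4)$ and the Radon-Nikod\'ym property of $E^*$, the Niemytskij operator $N_{\!f}^p\colon C(I,E)\to L^p(I,E)$ is weakly continuous. The Volterra operator $V$ defined by \eqref{volterra} is bounded and linear (its continuity under $(k_5)$-$(k_6)$ having been noted immediately after \eqref{volterra}), hence weakly-to-weakly continuous. Composition and translation by the fixed element $h\in C(I,E)$ then give that $\mathcal F = h + V\circ N_{\!f}^p$ is weakly continuous.

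For the $\nu_L$-condensing assertion, I would invoke Lemma \ref{calFlem} after re-interpreting $f$ as the set-valued map $\widetilde{F}(t,x):=\{f(t,x)\}$. The verification that $\widetilde{F}$ satisfies $(F_1)$-$(F_4)$, and satisfies $(F_5)$ when $(f_5)$ is assumed, is routine: singletons are closed, convex, and weakly compact, which handles $(F_1)$ (including the $p=1$ clause); $(F_2)$ is immediate from $(f_1)$, since $f(\cdot,x)$ is itself a strongly measurable selection of $\widetilde{F}(\cdot,x)$; $(F_4)$ is $(f_2)$; and $(F_5)$ coincides with $(f_5)$. For $(F_3)$, the weak continuity $(f_4)$ yields that $\sigma(p,\widetilde{F}(t,x)) = \langle p, f(t,x)\rangle$ is continuous, hence a fortiori upper semicontinuous, in $x$ for every $p\in E^*$.

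Applying the condensing portion of the proof of Lemma \ref{calFlem} to $\widetilde{F}$ under whichever of the two kernel configurations $(k_5)$-$(k_6)$ or $(k_5)$-$(k_7)$ is in force then yields, for $L>0$ chosen via \eqref{L}, that $\mathcal F$ is $\nu_L$-condensing on bounded subsets of $C(I,E)$. I do not anticipate any serious obstacle: the whole argument is a bookkeeping reduction to the multivalued setting already handled. The only nuance worth flagging is that weak continuity of $f(t,\cdot)$ is strictly stronger than the upper hemicontinuity of $\widetilde{F}(t,\cdot)$ required by Lemma \ref{calFlem}; in the present single-valued setting the latter is a trivial consequence of the former, so nothing is lost in the reduction.
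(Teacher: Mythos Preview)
Your argument is correct and mirrors the paper's intended reasoning: the corollary is stated without proof precisely because it follows from Proposition~2 (weak continuity of $N_{\!f}^p$ under $(f_1)$, $(f_2)$, $(f_4)$ and the Radon--Nikod\'ym property) together with the condensing portion of the proof of Lemma~\ref{calFlem}, applied to the singleton-valued $\widetilde{F}$ exactly as you describe. The only remark worth making is that you are right to invoke the \emph{proof} of Lemma~\ref{calFlem} rather than the lemma itself, since the lemma's hypothesis of reflexivity for $p>1$ is not assumed here; that hypothesis enters only through Proposition~\ref{Niemytskij} in the acyclicity half of the argument, and the condensing half is independent of it.
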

The source of technical difficulties mentioned in the introduction is the ambiguity of the integral description of solutions $x$ to the inclusion \eqref{inclusion} with the involvement of a selection of the set-valued map $F(\cdot,x(\cdot))$. Imposition of conditions $(k_1)$-$(k_4)$ resolves this problem, providing the injectivity of the Volterra operator $V$. In the following lemma this property was deduced from the Leibniz integral rule, differentiating the Volterra operator's kernel under the sign of a Bochner integral.
\begin{lemma}\label{lem2}
Asssume that kernel $k$ satisfies conditions $(k_1)$-$(k_4)$. Then the Volterra {ope\-ra\-tor} $V\colon L^p(I,E)\to C(I,E)$, given by the formula \eqref{volterra}, is injective.
\end{lemma}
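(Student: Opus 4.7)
The plan is to show that $V(w)=0$ forces $w=0$ almost everywhere by differentiating the identity $\int_0^t k(t,s)w(s)\,ds=0$ in $t$, thereby producing a Volterra-type scalar inequality for $|w|$ that is then closed by a Gronwall argument.

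First I would justify the Leibniz rule. Writing
\[\frac{V(w)(t+h)-V(w)(t)}{h}=\frac{1}{h}\int_t^{t+h}k(t+h,s)w(s)\,ds+\int_0^t\frac{k(t+h,s)-k(t,s)}{h}w(s)\,ds,\]
the second summand converges pointwise to $\frac{\partial}{\partial t}k(t,s)w(s)$ by $(k_1)$ and is dominated by $\psi(s)|w(s)|$ via the mean value inequality for Banach-valued maps together with $(k_4)$; H\"older's inequality places $\psi(\cdot)|w(\cdot)|$ in $L^1(I,\R{})$ since $p^{-1}+q^{-1}=1$, so dominated convergence produces $\int_0^t\frac{\partial}{\partial t}k(t,s)w(s)\,ds$ in the limit. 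In the first summand I split $k(t+h,s)=k(s,s)+[k(t+h,s)-k(s,s)]$: the correction term is bounded in operator norm by $h\psi(s)$ by the same mean value estimate, so its contribution tends to $0$, while Lebesgue's differentiation theorem applied to the integrable function $s\mapsto k(s,s)w(s)$ (its integrability follows from $(k_3)$, which makes $s\mapsto k(s,s)$ continuous on the compact $I$) yields the boundary term $k(t,t)w(t)$ for a.e.\ $t$. Altogether, for a.e.\ $t\in I$,
\[0\;=\;k(t,t)w(t)+\int_0^t\frac{\partial}{\partial t}k(t,s)w(s)\,ds.\]

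Second, $(k_3)$ says that $t\mapsto k(t,t)$ is continuous into the open set of invertibles in ${\mathcal L}(E)$. Since operator inversion is continuous there and $I$ is compact, $t\mapsto k(t,t)^{-1}$ is bounded, say $||k(t,t)^{-1}||_{{\mathcal L}}\leqslant M$ for every $t\in I$. Inverting the identity above gives the scalar estimate
\[|w(t)|\;\leqslant\;M\int_0^t\psi(s)|w(s)|\,ds \qquad \text{for a.e.\ }t\in I.\]

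Finally, I close the argument by a Gronwall step. Raising to the $p$-th power and applying H\"older yields $|w(t)|^p\leqslant M^p||\psi||_q^p\,\Phi(t)$ with $\Phi(t):=\int_0^t|w(s)|^p\,ds$. As $\Phi$ is absolutely continuous with $\Phi'=|w|^p$ a.e.\ and $\Phi(0)=0$, the linear inequality $\Phi'\leqslant C\Phi$ forces $\Phi\equiv 0$, i.e.\ $w=0$ in $L^p(I,E)$. (The case $p=1$ runs identically with $q=\infty$, absorbing $||\psi||_\infty$ into the constant.) The principal obstacle is the Leibniz step, specifically disposing of $\tfrac{1}{h}\int_t^{t+h}k(t+h,s)w(s)\,ds$ for an integrand of only $L^p$-regularity; once this is secured through the combination of $(k_4)$ with Lebesgue's differentiation theorem (symmetrically in $h\to 0^\pm$), the remaining steps are routine.
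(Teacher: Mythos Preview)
Your proposal is correct and follows essentially the same route as the paper: differentiate $V(w)\equiv 0$ via a Leibniz-rule computation (dominated convergence for the interior term, Lebesgue differentiation for the boundary term), invert $k(t,t)$ using $(k_3)$ and compactness of $I$, and close with a Gronwall argument on the resulting Volterra inequality. The only cosmetic differences are that the paper centers the boundary term around $k(t,t)$ rather than $k(s,s)$ and invokes a generalized Gronwall lemma directly instead of your $p$-th power reduction; neither change is substantive.
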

\begin{proof}
Note that the mapping $[0,t]\ni s\mapsto\frac{\partial}{\partial t}k(t,s)\in{\mathcal L}(E)$ may be treated as the poinwise limit of the sequence $\left(\frac{k(t+n^{-1},\cdot)-k(t,\cdot)}{n^{-1}}\right)_\n$ of strongly measurable functions. Thus, conditions $(k_1)$-$(k_2)$ and $(k_4)$ imply $\frac{\partial}{\partial t}k(t,\cdot)\in L^q([0,t],{\mathcal L}(E))$ for all $t\in I$.\par Let $V(w_1)=V(w_2)$ and $w=w_1-w_2$. Clearly $\frac{d}{dt}V(w)(t)=0$ for $t\in I$. In particular
\begin{equation}\label{zero}
\lim_{n\to\infty}\left(\int\limits_0^{t-\frac{1}{n}}\frac{k\left(t-\frac{1}{n},s\right)-k(t,s)}{-\frac{1}{n}}w(s)\,ds+n\int\limits_{t-\frac{1}{n}}^tk(t,s)w(s)\,ds\right)=0
\end{equation}
for every $t\in I$. If \[f_n(s)=\frac{k\left(t-\frac{1}{n},s\right)-k(t,s)}{-\frac{1}{n}}w(s)\chi_{[0,\,t-n^{-1}]}(s),\] then $f_n\in L^1([0,t],E)$ and $f_n(s)\xrightarrow[n\to\infty]{}\frac{\partial}{\partial t}k(t,s)w(s)$ for $s\in[0,t]$. Using the mean value theorem and assumption $(k_4)$ we see that for every $\n$ and for $s\in\left[0,t-\frac{1}{n}\right]$ the following estimate is valid
\[|f_n(s)|\<\left\Arrowvert\frac{k\left(t-\frac{1}{n},s\right)-k(t,s)}{-\frac{1}{n}}\right\Arrowvert_{{\mathcal L}}|w(s)|\<\sup_{\xi\in\left[t-\frac{1}{n},t\right]}\left\Arrowvert\frac{\partial}{\partial t}k\left(\xi,s\right)\right\Arrowvert_{{\mathcal L}}|w(s)|\<\psi(s)|w(s)|.\] For points $s\in\left(t-\frac{1}{n},t\right]$ we have equality $|f_n(s)|=0$, which means that \[|f_n(s)|\<\psi(s)|w(s)|\] for all $\n$ and $s\in[0,t]$. From Lebesgue convergence theorem (\cite[Th.II.2.3]{uhl}) it follows that
\[\lim_{n\to\infty}\int\limits_0^{t-\frac{1}{n}}\frac{k\left(t-\frac{1}{n},s\right)-k(t,s)}{-\frac{1}{n}}w(s)\,ds=\lim_{n\to\infty}\int_0^tf_n(s)=\int_0^t\frac{\partial}{\partial t}k(t,s)w(s)\,ds\] for every $t\in I$. On the other hand, we deal with the following estimates:
\begin{align*}
n\int\limits_{t-\frac{1}{n}}^t|k(t,s)w(s)-k(t,t)w(t)|\,ds&\<n\int\limits_{t-\frac{1}{n}}^t||k(t,s)-k(t,t)||_{{\mathcal L}}|w(s)|\,ds+n\int\limits_{t-\frac{1}{n}}^t||k(t,t)||_{{\mathcal L}}|w(s)-w(t)|\,ds\\&=||k(t,\xi(n))-k(t,t)||_{{\mathcal L}}\;n\int\limits_{t-\frac{1}{n}}^t|w(s)|\,ds+||k(t,t)||_{{\mathcal L}}\;n\int\limits_{t-\frac{1}{n}}^t|w(s)-w(t)|\,ds,
\end{align*}
where $\xi(n)\in\left[t-\frac{1}{n},t\right]$ is some point that exists under the mean value theorem to the integral of the product of a continuous function and a Lebesgue integrable function. The set of Lebesgue points of the Bochner integrable map $I\ni s\mapsto w(s)\in E$, i.e. such points that
\[\lim_{n\to\infty}n\int\limits_{t-\frac{1}{n}}^t|w(s)-w(t)|\,ds=0,\] is a set of full measure in the interval $I$ (\cite[Th.II.2.9]{uhl}). Similarly, for the Lebesgue integrable function $I\ni s\mapsto|w(s)|\in\R{}$ the equality $\lim\limits_{n\to\infty}n\int_{t-\frac{1}{n}}^t|w(s)|\,ds=|w(t)|$ holds for almost all $t\in I$. In view of these properties and the continuity of the mapping $k(t,\cdot)$ we find that
\[\lim_{n\to\infty}n\int\limits_{t-\frac{1}{n}}^tk(t,s)w(s)\,ds=k(t,t)w(t)\] for almost all $t\in I$.\par Applying the equality \eqref{zero}, we see that
\[k(t,t)w(t)+\int_0^t\frac{\partial}{\partial t}k(t,s)w(s)\,ds=0\] for almost all $t\in I$. Inverting operator $k(t,t)$ we obtain the dependence
\[w(t)=-k(t,t)^{-1}\int_0^t\frac{\partial}{\partial t}k(t,s)w(s)\,ds\] and as a consequence the following bound
\[|w(t)|\<||k(t,t)^{-1}||_{{\mathcal L}}\int_0^t\left\Arrowvert\frac{\partial}{\partial t}k(t,s)\right\Arrowvert_{{\mathcal L}}|w(s)|\,ds\] for almost all $t\in I$. Using continuity of the mapping $k(\cdot,\cdot)$ on the diagonal of $I\times I$ and the uniform boundedness principle we infer that $M=\sup_{t\in I}||k(t,t)^{-1}||_{{\mathcal L}}<\infty$. Combining this with the assumption $(k_4)$ we see that
\[|w(t)|\<M\int_0^t\psi(s)|w(s)|\,ds\] for almost all $t\in I$. After applying the generalized version of Gronwall inequality, for the case of Lebesgue integrable functions, it becomes clear that $|w(t)|=0$ for almost all $t\in I$. Thus $w_1=w_2$ in $L^p(I,E)$ and operator $V$ is an injection.
\end{proof}
\par An idea of applying the metric projection in the description of geometrical properties of the solutions set of inclusion \eqref{inclusion} was taken from \cite{umanski}. Important from our point of view are also circumstances in which the metric projection mapping can be regarded as singlevalued. For this reason we quote the following known fact.
\begin{proposition}\label{singleton}
Let $A$ be a nonempty convex and weakly compact subset of a strictly convex normed space $(E,|\cdot|)$. Then the set of elements of the best approximation
\[Pr_A(x)=\{y\in A\colon |x-y|=d_A(x)=\inf\{|x-a|\colon a\in A\}\}\] is a singleton for every $x\in E$.
\end{proposition}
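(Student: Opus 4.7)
The plan is to prove the two assertions packed into ``singleton'', namely nonemptiness and uniqueness of $Pr_A(x)$, using two standard ingredients: the weak lower semicontinuity of the norm for nonemptiness, and the strict convexity of $E$ for uniqueness.

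For nonemptiness, I would take a minimizing sequence $(y_n)_\n \subset A$ with $|x - y_n| \to d_A(x)$. Since $A$ is weakly compact, by the Eberlein--\v Smulian theorem we can extract a subsequence $y_{k_n} \rightharpoonup y_0 \in A$. The function $E \ni z \mapsto |x-z|$ is convex and norm-continuous, hence weakly lower semicontinuous, so
\[
|x - y_0| \leqslant \liminf_{n\to\infty} |x - y_{k_n}| = d_A(x),
\]
forcing $|x - y_0| = d_A(x)$ and $y_0 \in Pr_A(x)$.

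For uniqueness, suppose there exist $y_1, y_2 \in Pr_A(x)$ with $y_1 \neq y_2$, and set $d := d_A(x)$. If $d=0$ then $y_1 = y_2 = x$, contradiction; so $d>0$. Convexity of $A$ gives $\tfrac{1}{2}(y_1+y_2) \in A$. The unit vectors $u_i := (x - y_i)/d$ satisfy $|u_1| = |u_2| = 1$ and $u_1 \neq u_2$, so strict convexity of $E$ yields $\left|\tfrac{1}{2}(u_1+u_2)\right| < 1$, which translates into
\[
\left| x - \tfrac{1}{2}(y_1+y_2) \right| = d \cdot \left| \tfrac{1}{2}(u_1 + u_2) \right| < d,
\]
contradicting the definition of $d_A(x)$ as the infimum over $A$.

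The argument is essentially textbook; there is no real obstacle, only the mild point of invoking Eberlein--\v Smulian (since $E$ need not be separable) to convert weak compactness into weak sequential compactness so that the weakly lower semicontinuous norm can be minimized along a sequence. Alternatively, one could argue directly by intersecting $A$ with the closed balls $\overline{D}(x,d_A(x)+1/n)$ — each intersection is weakly closed in $A$ and nonempty, and by weak compactness their intersection is nonempty — which avoids Eberlein--\v Smulian altogether.
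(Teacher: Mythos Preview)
Your proof is correct. The paper does not actually supply a proof of this proposition; it is introduced as a ``known fact'' and stated without justification, so there is nothing to compare against beyond noting that your argument is the standard one.
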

\par In support of the contractibility of the set of approximative solutions to inclusion \eqref{inclusion} a special role plays the uniqueness of the existence of solutions to Volterra integral equation with measurable-locally Lipschitzean integrand function. The problem of existence of solutions to Volterra integral equations in Banach spaces is well recognized and extensively discussed in the previous literature on the subject (compare, for instance \cite{bugaj, reg, pre, szufla, vath}). The thesis of the following lemma also includes the equivalent of ``continuous dependence on initial conditions'' for differential equations.
\begin{lemma}\label{equation}
Let $f\colon I\times E\to E$ be a function satisfying the following conditions:
\begin{itemize}
\item[(i)] $f(\cdot,x)\colon I\to E$ is strongly measurable for every $x\in E$,
\item[(ii)] for each compact subset $K$ of the space $E$ there is a function $\mu_K\in L^p(I,\R{})$ and a real number $\delta>0$ such that \[|f(t,x_1)-f(t,x_2)|\<\mu_K(t)|x_1-x_2|\] for every $t\in I$ and for $x_1,x_2\in D(K,\delta)$,
\item[(iii)] there is a function $c\in L^p(I,\R{})$ such that $|f(t,x)|\<c(t)$ for almost all $t\in I$ and for every $x\in E$.
\end{itemize}
Suppose that $k$ satisfies assumptions $(k_5)$-$(k_6)$ for $p^{-1}+q^{-1}=1$. Then the Volterra integral equation
\begin{equation}\label{rownanie}
x(t)=h(t)+\int_0^tk(t,s)f(s,x(s))ds,\;t\in I
\end{equation}
possesses a unique solution for any $h\in C(I,E)$. Moreover, solutions of the equation \eqref{rownanie} depend continuously on the perturbation $h$. 
\end{lemma}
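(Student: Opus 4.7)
The plan is to handle the three assertions in turn: uniqueness by a Gronwall argument, existence by a local Banach contraction plus a maximal-interval extension, and continuous dependence by the same contraction estimate applied to the difference of two solutions.

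For \emph{uniqueness}, suppose $x_1,x_2\in C(I,E)$ both satisfy \eqref{rownanie}. The set $K=x_1(I)\cup x_2(I)$ is compact, so hypothesis (ii) supplies a function $\mu_K\in L^p(I,\R{})$ with
\[|x_1(t)-x_2(t)|\<\int_0^t||k(t,s)||_{{\mathcal L}}\mu_K(s)|x_1(s)-x_2(s)|\,ds,\]
where $||k(t,\cdot)||_{{\mathcal L}}\mu_K(\cdot)\in L^1(I,\R{})$ by H\"older's inequality and $(k_5)$. A generalized Gronwall inequality for Lebesgue-integrable kernels then forces $x_1\equiv x_2$.

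For \emph{existence}, define $T_h\colon C(I,E)\to C(I,E)$ by $T_h(x)(t)=h(t)+\int_0^tk(t,s)f(s,x(s))\,ds$; by (iii), $(k_5)$ and $(k_6)$ this operator is well defined and continuous. I would first prove a local version: for any starting time $a\in I$ and continuous datum $g$ on $[a,T]$, set $K_0=g([a,a+\eta])$ and extract $\delta_0,\mu_{K_0}$ from (ii); then choose $\eta>0$ small enough that $\sup_{t\in[a,a+\eta]}\int_a^t||k(t,s)||_{{\mathcal L}}c(s)\,ds\<\delta_0/2$, which is possible by absolute continuity of the Lebesgue integral together with $(k_6)$. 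The operator $T_h$ maps $Y_\eta=\{y\in C([a,a+\eta],E)\colon||y-g||_\infty\<\delta_0/2\}$ into itself, and any $y_1,y_2\in Y_\eta$ take values in $D(K_0,\delta_0)$, so the Lipschitz estimate of (ii) combined with a Bielecki-type norm $||y||_L=\sup_{t}e^{-Lt}|y(t)|$ turns $T_h$ into a genuine contraction on $Y_\eta$ for $L$ large (one more application of H\"older shows the contraction rate tends to $0$ as $L\to+\infty$), yielding a unique local fixed point by the Banach theorem. To globalize, let $[0,t^*)$ be the maximal subinterval on which a solution $x$ exists -- by uniqueness different local solutions coincide on overlaps. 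If $t^*<T$ the Volterra structure lets $x$ extend continuously up to $t^*$, so $x([0,t^*])$ is compact; applying the local argument at $a=t^*$ with new datum $\tilde h(t)=h(t)+\int_0^{t^*}k(t,s)f(s,x(s))\,ds$ (continuous on $[t^*,T]$ by $(k_6)$) produces an extension past $t^*$, contradicting maximality. Hence $t^*=T$. \emph{Continuous dependence} then follows because, on each of the finitely many pieces used in the extension, the Banach fixed point depends Lipschitz-continuously on the datum, which iterates to $||x_n-x||_\infty\to 0$ whenever $h_n\to h$ in $C(I,E)$.

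The main technical obstacle I expect is the local-to-global passage: the Lipschitz radius $\delta$ and function $\mu_K$ in (ii) depend on the compact set $K$ of values swept out by the solution so far, and one must guarantee that the admissible step length does not shrink to zero. Phrasing the extension as a maximal-interval argument (rather than an a priori partition of $I$) circumvents this worry, because at every stage one extracts (ii) from the actual compact image already traced out by the constructed solution, whose continuity up to any $t^*<T$ is guaranteed by the continuity of the Volterra operator under the global bound (iii).
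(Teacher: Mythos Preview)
Your overall strategy is sound and reaches the same conclusion as the paper, but by a genuinely different route. The paper does \emph{not} use the Banach contraction principle for local existence: instead it invokes Corollary~\ref{calFsing} (the operator ${\mathcal F}=h+V\circ N_{\!f}^p$ is continuous and $\nu_L$-condensing, since condition~(ii) yields $\beta(f(t,\Omega(t)))\<\mu(t)\beta(\Omega(t))$) and then applies the Darbo--Sadovskij-type Theorem~\ref{4} on the ball $D_\eps(h,\delta)$. Your Bielecki-norm contraction is more elementary---it avoids all measure-of-noncompactness machinery---while the paper's choice simply reuses the condensing framework already in place for the multivalued case. The uniqueness and maximal-extension steps are essentially the same in both; the paper passes through H\"older to reduce the $t$-dependent kernel $||k(t,s)||_{{\mathcal L}}\mu_K(s)$ to a Gronwall inequality for $|x(t)-y(t)|^p$, which is the clean way to make your ``generalized Gronwall'' remark precise.

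The one point where your sketch is genuinely thinner than the paper is continuous dependence. The paper argues indirectly: it shows $\{x_n\}$ is relatively compact in $C(I,E)$ by checking $\nu_L(\{x_n\})\<\nu_L({\mathcal T}\{x_n\})$ (using that $\{h_n\}$ is precompact and ${\mathcal T}$ is $\nu_L$-condensing), then observes that any cluster point solves the limit equation and invokes uniqueness. Your proposed route---Lipschitz dependence of the Banach fixed point iterated over ``finitely many pieces''---is viable, but note that your extension is phrased as a maximal-interval argument, not a fixed partition, and the local step length $\eta$, the radius $\delta$, and the Lipschitz integrand $\mu_K$ all depend on the compact set of values traced out so far, hence on $h$. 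To close this you would fix the limiting solution $x$ first, take $K=x(I)$ and the corresponding $\delta,\mu_K$, choose $L$ so that the Bielecki contraction rate on $D(x,\delta)$ is $<1$, and then run a continuation argument showing that $x_n$ cannot leave $D(x,\delta)$ once $||h_n-h||$ is small. (There is also a small circularity in your local step: you write $K_0=g([a,a+\eta])$ before $\eta$ is chosen; taking $K_0=\{g(a)\}$ or $K_0=g([a,T])$ fixes this.)
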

\begin{proof}
Fix $h\in C(I,E)$. From (ii) it follows that there is a function $\mu=\mu_{h(I)}$ and $\delta>0$ such that the mapping $f(t,\cdot)\colon D(h(I),\delta)\to E$ is $\mu(t)$-Lipschitzean for every $t\in I$. Define operator ${\mathcal F}\colon C([0,\eps],E)\to C([0,\eps],E)$ by the formula:
\[{\mathcal F}(x)(t)=h(t)+\int_0^tk(t,s)f(s,x(s))\,ds,\;t\in[0,\eps],\]
where $\eps>0$ is such that
\begin{equation}\label{delta}
\left(\int_0^\eps c(s)^pds\right)^{\frac{1}{p}}\<\delta\left(\sup\limits_{t\in I}||k(t,\cdot)||_q\right)^{-1}.
\end{equation}
If $D_\eps(h,\delta)$ denotes a ball in the space $C([0,\eps],E)$, i.e. $D_\eps(h,\delta)=\{x\in C([0,\eps],E)\colon\max\limits_{t\in[0,\eps]}|x(t)-h(t)|\<\delta\}$, then it is clear that ${\mathcal F}(D_\eps(h,\delta))\subset D_\eps(h,\delta)$. Observe that function $f$ fulfills assumptions $(f_1)$-$(f_3)$ and $(f_5)$. Strictly speaking, with regard to condition $(f_5)$ we have $\beta(f(t,\Omega(t)))\<\mu(t)\beta(\Omega(t))$ for every $\Omega\subset D(h,\delta)$ and $t\in I$, where $\Omega(t)=\{x(t)\colon x\in\Omega\}$. Therefore the operator ${\mathcal F}\colon D_\eps(h,\delta)\to D_\eps(h,\delta)$ is continuous and $\nu_L$-condensing (Corollary \ref{calFsing}.). By Theorem \ref{4}. there is a fixed point of ${\mathcal F}$ which is a local solution to equation \eqref{rownanie}. This solution is unique as we will see. Suppose there are two functions $x,\,y\in C([0,\eps],E)$ satisfying $x={\mathcal F}(x)$ i $y={\mathcal F}(y)$. Applying the assumption (ii) to the compact set $K=x([0,\eps])\cup y([0,\eps])$ we obtain the estimation:
\begin{align*}
|x(t)-y(t)|&\<\int_0^t||k(t,s)||_{{\mathcal L}}|f(s,x(s))-f(s,y(s))|ds\<\int_0^t||k(t,s)||_{{\mathcal L}}\mu_K(s)|x(s)-y(s)|ds\\&\<\sup\limits_{t\in[0,\eps]}||k(t,\cdot)||_q\left(\int_0^t\mu_K(s)^p|x(s)-y(s)|^pds\right)^{\frac{1}{p}}
\end{align*}
for every $t\in[0,\eps]$. Thus
\[|x(t)-y(t)|^p\<\left(\sup\limits_{t\in[0,\eps]}||k(t,\cdot)||_q\right)^p\int_0^t\mu_K(s)^p|x(s)-y(s)|^pds.\] By Gronwall inequality we infer that $\sup_{t\in[0,\eps]}|x(t)-y(t)|^p=0$, i.e. $x=y$.\par Starting from this point the operator ${\mathcal F}$ is treated as a mapping ${\mathcal F}\colon C(I,E)\to C(I,E)$. Let $\pi\colon I\times C(I,E)\to C(I,E)$ be a function defined by
\[\pi(t,x)(s)=
\begin{cases}
x(s)&\mbox{dla }s\in[0,t],\\
x(t)&\mbox{dla }s\in[t,T].
\end{cases}\]
Performed above reasoning regarding the uniqueness of the local solution is obviously true for each subinterval of the interval $I$. Therefore we have
\begin{equation}\label{rownanie1}
\pi(\tau,x)=\pi(\tau,{\mathcal F}(x))\,\wedge\,\pi(\tau,y)=\pi(\tau,{\mathcal F}(y))\Rightarrow \pi(\tau,x)=\pi(\tau,y).
\end{equation}
Introduce the following designation:
\[J=\{t\in I\colon\exists\,x_t\in C(I,E)\;\;x_t=\pi(t,{\mathcal F}(x_t))\}.\]
Let $x$ be a solution of \eqref{rownanie} on the interval $[0,\eps]$. Then the function $x_\eps\in C(I,E)$ such that $x_\eps=\pi(\eps,x)$ satisfies $x_\eps=\pi(\eps,{\mathcal F}(x_\eps))$. Thus $J$ is nonempty. In fact it is easy to see that for all $t\in J$ we have $[0,t]\subset J$, i.e. $J$ is a subinterval of the segment $I$.\par Take a sequence $(t_n)_\n$, which is monotonically convergent in the set $J$ to the point $t_0=\sup J$. So if $m\<n$, then $t_m\<t_n$ and as a result occurs both $\pi(t_m,x_{t_m})=\pi(t_m,{\mathcal F}(x_{t_m}))$ and $\pi(t_m,x_{t_n})=\pi(t_m,{\mathcal F}(x_{t_n}))$. Thus, in accordance with \eqref{rownanie1}, $\pi(t_m,x_{t_m})=\pi(t_m,x_{t_n})$, i.e. $x_{t_m}(s)=x_{t_n}(s)$ for $s\in[0,t_m]$. The sequence $(x_{t_n}(t_0))_\n$ is fundamental in the space $E$. Indeed, for $m\<n$ we have
\begin{equation}\label{equ3}
\begin{aligned}
&|x_{t_n}(t_0)-x_{t_m}(t_0)|=|x_{t_n}(t_n)-x_{t_m}(t_m)|\\&\<|h(t_n)-h(t_m)|+\int_{t_m}^{t_n}||k(t_n,s)||_{{\mathcal L}}|f(s,x_{t_n}(s))|ds\\&+\int_0^{t_m}||k(t_n,s)-k(t_m,s)||_{{\mathcal L}}|f(s,x_{t_m}(s))|ds\\&\<|h(t_n)-h(t_m)|+\int_{t_m}^{t_n}||k(t_n,s)||_{{\mathcal L}}c(s)ds+\int_0^{t_m}||k(t_n,s)-k(t_m,s)||_{{\mathcal L}}c(s)ds\\&\<|h(t_n)-h(t_m)|+\sup\limits_{t\in I}||k(t,\cdot)||_q\left(\int_{t_m}^{t_n}c(s)^pds\right)^{\frac{1}{p}}+||k(t_n,\cdot)-k(t_m,\cdot)||_q||c||_p.
\end{aligned}
\end{equation}
The right-hand side of the above inequality tends to zero as $n,m\to\infty$. Let $x_{t_0}\colon I\to E$ be a mapping defined as follows:
\[x_{t_0}(t)=
\begin{cases}
\,x_{t_n}(t)&\mbox{for }t\in[0,t_n],\\
\,\lim\limits_{n\to\infty}x_{t_n}(t_0)&\mbox{for }t\in[t_0,T].
\end{cases}\]
The correctness of the definition follows from \eqref{rownanie1}. The continuity of $x_{t_0}$ is simple consequence of the equicontinuity of the family $\{x_{t_n}\}_\n$. It is clear that $x_{t_0}(t)={\mathcal F}(x_{t_0})(t)$ for $t\in[0,t_0)$. Estimating analogously to \eqref{equ3} we get
\[x_{t_0}(t_0)=\lim_{n\to\infty}\left(h(t_n)+\int_0^{t_n}k(t_n,s)f(s,x_{t_n}(s))ds\right)=h(t_0)+\int_0^{t_0}k(t_0,s)f(s,x_{t_0}(s))ds.\] 
Therefore $x_{t_0}=\pi(t_0,{\mathcal F}(x_{t_0}))$ and $t_0\in J$.\par Assume that $t_0<T$. Then there is $\eps>0$ and a mapping $x\in C([t_0,t_0+\eps],E)$ satisfying the following integral equation:
\[x(t)=g(t)+\int_{t_0}^tk(t,s)f(s,x(s))ds,\]
where $g\in C([t_0,T],E)$ is such that
\[g(t)=h(t)+\int_0^{t_0}k(t,s)f(s,x_{t_0}(s))ds.\]
Put
\[x_{t_0+\eps}(t)=
\begin{cases}
x_{t_0}(t)&\mbox{for }t\in[0,t_0],\\
x(t)&\mbox{for }t\in[t_0,t_0+\eps],\\
x(t_0+\eps)&\mbox{for }t\in[t_0+\eps,T].
\end{cases}\]
and observe that $x_{t_0+\eps}\in C(I,E)$. Moreover, the following equality holds
\[x_{t_0+\eps}(t)=h(t)+\int_0^tk(t,s)f(s,x_{t_0+\eps}(s))ds\]
for $t\in[0,t_0+\eps]$. We see that $x_{t_0+\eps}=\pi(t_0+\eps,{\mathcal F}(x_{t_0+\eps}))$ which is contrary to the maximality of $t_0$ in $J$. Consequently $T=t_0$. \par We have shown that the mapping $x_{t_0}$ satisfies $x_{t_0}=\pi(T,{\mathcal F}(x_{t_0}))$. Thus, it is the sought unique solution of the equation \eqref{rownanie}. \par Denote by ${\mathcal T}$ the integral part of the operator ${\mathcal F}$, i.e. ${\mathcal F}=h+{\mathcal T}$. As we already know ${\mathcal T}$ is a continuous $\nu_L$-condensing operator for some $L>0$. The solution $x_n$ to the equation \eqref{rownanie} with the perturbation $h_n\in C(I,E)$ satisfies the operational equation $x_n=h_n+{\mathcal T}x_n$. Suppose that $h_n\rightrightarrows h$ as $n\to\infty$. Our goal is to show that $x_n\rightrightarrows x$, where $x$ is such that $x=h+{\mathcal T}x$. \par Reffering to the equicontinuity of $\{h_n\}_\n$ and to the estimation
\[|x_n(t)-x_n(\tau)|\<|h_n(t)-h_n(\tau)|+||k(t,\cdot)-k(\tau,\cdot)||_q||c||_p\!+\sup\limits_{t\in I}||k(t,\cdot)||_q\left(\int_t^\tau c(s)^pds\right)^{\frac{1}{p}}\]
we can deduce: $\modulus_C(\{x_n\}_\n)=0$. Similarly one can justify that $\modulus_C(\{{\mathcal T}x_n\}_\n)=0$. Given the relative compactness of the set $\{h_n\}_\n$ we can write down:
\begin{align*}
\sup_{t\in I}e^{-Lt}\beta(\{x_n(t)\}_\n)&=\sup_{t\in I}e^{-Lt}\beta(\{h_n(t)+{\mathcal T}(x_n)(t)\}_\n)\\&\<\sup_{t\in I}e^{-Lt}\beta(\{h_n(t)\}_\n)+\sup_{t\in I}e^{-Lt}\beta(\{{\mathcal T}(x_n)(t)\}_\n)\\&=\sup_{t\in I}e^{-Lt}\beta(\{{\mathcal T}(x_n)(t)\}_\n).
\end{align*}
Thus $\nu_L(\{x_n\}_\n)\<\nu_L({\mathcal T}\{x_n\}_\n)$, i.e. $\{x_n\}_\n$ is relatively compact in the space $C(I,E)$.\par Take arbitrary subsequence $(x_{k_n})_\n$ of the sequence $(x_n)_\n$. It contains a subsequence $(x_{m_{k_n}})_\n$, convergent to some $y$ in the space $C(I,E)$. The continuity of the operator ${\mathcal T}$ implies convergence $h_{m_{k_n}}+{\mathcal T}x_{m_{k_n}}\rightrightarrows h+{\mathcal T}y$. Hence $y=h+{\mathcal T}y$. The uniquness of the solution to the equation \eqref{rownanie} means that $x_{m_{k_n}}\rightrightarrows x$. Eventually, the sequence $(x_n)_\n$ tends to the solution $x$.
\end{proof}
\section{Structure of the solution set}
Note that the assumptions $(k_5)$-$(k_6)$ and $(F_4)$ shows that there are a priori bounds on the solutions to \eqref{inclusion}, i.e. if $x\in S^p_F$, then
\[|x(t)|\<||h||+\int_0^t||k(t,s)||_{{\mathcal L}}c(s)(1+|x(s)|)ds\<A+B\left(\int_0^tc(s)^p|x(s)|^pds\right)^{\frac{1}{p}}\]
for every $t\in I$, where $B=\sup_{t\in I}||k(t,\cdot)||_q$ and $A=||h||+B||c||_p$. Thus
\begin{equation}\label{equ6}
||x||\<M=2^{1-p^{-1}}A\exp\left(p^{-1}2^{p-1}B^p||c||^p_p\right).
\end{equation}
\par From now on we assume that the map $F$, instead of condition $(F_4)$, satisfies:
\begin{itemize}
\item[$(F_4')$] $||F(t,x)||\<\mu(t)$ for every $x\in E$ and for almost all $t\in I$, where $\mu\in L^p(I,\R{})$.
\end{itemize}
This assumption does not reduce the generality of the considerations set out in each of the following statements. Indeed, if we denote by $r\colon E\to D(0,M)$ a radial retraction onto the closed ball $D(0,M)$ with radius $M$ given by \eqref{equ6}, then the solution set $S^p_{\!\hat{F}}$ to the integral inclusion
\[x\in h+V\circ N^p_{\!\hat{F}}(x),\]
where the set-valued map $\hat{F}\colon I\times E\map E$ is such that $\hat{F}(t,x)=F(t,r(x))$, coincides with the set $S^p_{\!F}$. Evidently, the map $\hat{F}$ satisfies assumptions $(F_1)$, $(F_2)$ and $(F_4')$ for $\mu(\cdot)=(1+M)c(\cdot)$. Upper hemicontinuity of the map $\hat{F}(t,\cdot)$ follows from the fact that a radial retraction is a Lipschitzean function. The retraction $r$ is also 1-$\beta$-contractive and therefore $\hat{F}$ satisfies assumption $(F_5)$.\par Let us pass on to our prime subject, i.e. to theorems describing the structure of the solution set to the considered integral inclusion. The first of these gives the topological characteristics of this structure.
\begin{theorem}\label{solset}
Let $p\in[1,\infty)$, while the space $E$ is reflexive for $p\in(1,\infty)$. Assume that conditions $(k_5)$-$(k_6)$ and $(F_1)$-$(F_5)$ are fulfilled. Then the set of solutions $S^p_{\!F}$ of the integral inclusion \eqref{inclusion} is nonempty and compact.
\end{theorem}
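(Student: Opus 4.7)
The plan is to identify $S^p_{\!F}$ with the fixed point set of the operator ${\mathcal F}=h+V\circ N^p_{\!F}$ introduced in \eqref{calF}, and to exploit the two assertions of Lemma \ref{calFlem}: that ${\mathcal F}$ is acyclic (hence admissible) and $\nu_L$-condensing for some $L>0$. The first will yield nonemptiness via the Darbo-Sadovskij-type fixed point theorem (Theorem \ref{4}), and the second will yield compactness once $S^p_{\!F}$ is known to be bounded and closed.

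For nonemptiness I first invoke the reduction described immediately before the theorem statement: replacing $F$ by $\hat{F}(t,x)=F(t,r(x))$, where $r\colon E\to D(0,M)$ is the radial retraction onto the ball of radius $M$ from the a priori bound \eqref{equ6}, yields a map satisfying $(F_1)$-$(F_3)$, $(F_5)$ and the strengthened condition $(F_4')$ with $\mu=(1+M)c$, while leaving the solution set unchanged. Under $(F_4')$ a direct H\"older estimate using $(k_5)$ shows that the range of the corresponding operator ${\mathcal F}$ lies in the closed ball $C=D(0,R)\subset C(I,E)$, for $R=||h||+\sup_{t\in I}||k(t,\cdot)||_q||\mu||_p$. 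Restricted to this nonempty closed convex bounded set, ${\mathcal F}$ is an admissible $\nu_L$-condensing self-map, so Theorem \ref{4} supplies a fixed point and hence $S^p_{\!F}\neq\emptyset$.

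For compactness, $S^p_{\!F}$ is bounded by $M$ thanks to \eqref{equ6}, and closed because ${\mathcal F}$ is upper semicontinuous with compact values, whence its graph in $C(I,E)\times C(I,E)$ is closed and $\fix({\mathcal F})$, being the preimage of the graph under the diagonal embedding $x\mapsto(x,x)$, is closed as well. Since every $x\in S^p_{\!F}$ lies in ${\mathcal F}(x)$, we have $S^p_{\!F}\subset{\mathcal F}(S^p_{\!F})$, so monotonicity of $\nu_L$ gives $\nu_L(S^p_{\!F})\<\nu_L({\mathcal F}(S^p_{\!F}))$, and the $\nu_L$-condensing property forces $S^p_{\!F}$ to be relatively compact, hence compact. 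The substantive difficulty has already been absorbed into Lemma \ref{calFlem}---acyclicity of ${\mathcal F}$ is inherited from weak upper semicontinuity of $N^p_{\!F}$ (Proposition \ref{Niemytskij}) together with equicontinuity supplied by $(k_6)$, and the condensing property rests on Theorem \ref{3} combined with the exponential weight calibrated in \eqref{L}---so the only point requiring care in the present argument is to secure a bounded invariant set for ${\mathcal F}$, which is precisely what the passage from $(F_4)$ to $(F_4')$ delivers.
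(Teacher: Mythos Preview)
Your argument is correct. The nonemptiness part coincides with the paper's: reduce to $(F_4')$, confine ${\mathcal F}$ to the ball $D(0,R)$, and apply Theorem~\ref{4} via Lemma~\ref{calFlem}.

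For compactness the paper proceeds differently. Rather than invoking the condensing property on $S^p_{\!F}$ itself, it takes an arbitrary sequence $(x_n)$ in $S^p_{\!F}$, establishes equicontinuity directly from $(k_6)$ and $(F_4')$, shows $\beta(\{x_n(t)\}_\n)=0$ for each $t$ by a Gronwall argument built on the estimate \eqref{analog}, and then applies Arzel\`a together with the weak upper semicontinuity of $N^p_{\!F}$ (Proposition~\ref{Niemytskij}) to extract a convergent subsequence whose limit lies in $S^p_{\!F}$. Your route---using $S^p_{\!F}\subset{\mathcal F}(S^p_{\!F})$, monotonicity of $\nu_L$, and the condensing property already furnished by Lemma~\ref{calFlem}, combined with closedness of $\fix({\mathcal F})$ via the closed-graph observation---is shorter and more economical, since it recycles machinery already in place. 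The paper's direct argument, by contrast, is self-contained (it does not lean on the full strength of Lemma~\ref{calFlem}) and makes the mechanism behind relative compactness explicit; this explicit Gronwall computation is later reused in the paper, e.g.\ in Proposition~\ref{solutionsetmap} and in Theorem~\ref{acyclic}.
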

\begin{proof}
Let ${\mathcal F}$ be a set-valued map given by \eqref{calF}. Observe that ${\mathcal F}$ is bounded. Actually, if $y\in{\mathcal F}(x)$ for some $x\in C(I,E)$, then $||y||\<R=||h||+\sup_{t\in I}||k(t,\cdot)||_q||\mu||_p$, where $\mu$ is the integral bound of $F$ under the assumption $(F_4')$. Due to Lemma \ref{calFlem}. operator ${\mathcal F}\colon D(0,R)\map D(0,R)$ is an acyclic and $\nu_L$-condensing multivalued map. In view of Theorem \ref{4}. it has a fixed point. This fixed point constitutes a solution to inclusion \eqref{inclusion}.\par In order to show the compactness of $S_{\!F}^p$, fix any sequence $(x_n)_\n$ of elements of $S^p_{\!F}$. Assuming that $x_n=h+V(w_n)$ for some $w_n\in N_F^p(x_n)$ we get the relation
\[|x_n(t)-x_n(\tau)|\<|h(t)-h(\tau)|+||k(t,\cdot)-k(\tau,\cdot)||_q||\mu||_p+||k(\tau,\cdot)||_q\left(\int_t^\tau|\mu(s)|^pds\right)^{\frac{1}{p}}\!\!\!.\]
From this we deduce that the family $\{x_n\}_\n$ is equicontinuous. Thus the mapping $I\ni t\mapsto \{x_n(t)\}_\n\subset E$ is continuous with respect to Hausdorff metric. The continuity of the MNC $\beta$ implies the continuity of the function $f\colon I\to\R{}$ such that $f(t)=\beta(\{x_n(t)\}_\n)^p$. Estimating similar to \eqref{analog} we obtain:
\[\beta(\{x_n(t)\}_\n)\<2\int_0^t\!||k(t,s)||_{{\mathcal L}}\eta(s)\beta(\{x_n(s)\}_\n)ds\<2\sup_{t\in I}||k(t,\cdot)||_q\left(\int_0^t\!\eta(s)^p\beta(\{x_n(s)\}_\n)^pds\right)^{\frac{1}{p}}\!\!,\]
that is \[f(t)\<2^p\left(\sup_{t\in I}||k(t,\cdot)||_q\right)^p\int_0^t\eta(s)^pf(s)ds\] for $t\in I$. Applying Gronwall inequality we find that $f$ is a zero function. Therefore Arzel\`a theorem enables us to choose a subsequence $(x_{k_n})_\n$, converging uniformly to some $x$. Proposition \ref{Niemytskij}. implies existence of a subsequence $(w_{m_{k_n}})_\n$, which converges weakly in $L^p(I,E)$ to $w\in N^p_F(x)$. Since $(x_{m_{k_n}})_\n$ tends weakly to $h+V(w)$, so the limit point $x$ must be an element of $S^p_{\!F}$. 
\end{proof}
\begin{corollary}\label{solset2}
Let $p\in[1,\infty)$, while the space $E$ is reflexive for $p\in(1,\infty)$. Assume that conditions $(k_5)$-$(k_7)$ and $(F_1)$-$(F_4')$ are fulfilled. Then the set of solutions $S^p_{\!F}$ of the integral inclusion \eqref{inclusion} is nonempty and compact.
\end{corollary}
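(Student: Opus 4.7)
The plan is to follow the proof of Theorem~\ref{solset} almost verbatim, substituting the role of hypothesis $(F_5)$ with that of $(k_7)$. Nonemptiness requires no essential change: with $\mu$ supplied by $(F_4')$, any $y\in{\mathcal F}(x)$ satisfies $||y||\<R=||h||+\sup_{t\in I}||k(t,\cdot)||_q||\mu||_p$, so ${\mathcal F}$ of \eqref{calF} restricts to a self-map of $D(0,R)$. By the second alternative of Lemma~\ref{calFlem}. (which is precisely designed to cover the package $(F_1)$--$(F_4)$ with $(k_5)$--$(k_7)$) this restriction is an acyclic $\nu_L$-condensing multivalued map. Theorem~\ref{4}. then yields a fixed point, which is an element of $S^p_{\!F}$.

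For compactness I would fix a sequence $(x_n)_\n\subset S^p_{\!F}$ with $x_n=h+V(w_n)$ and $w_n\in N_F^p(x_n)$. Since the $w_n$ are majorized by $\mu\in L^p(I,\R{})$, the same estimate as in Theorem~\ref{solset}. gives equicontinuity of $\{x_n\}_\n$ in $C(I,E)$. The only place where the previous proof used $(F_5)$ is the pointwise computation of $\beta(\{x_n(t)\}_\n)$; here I would instead argue as in the last paragraph of the proof of Lemma~\ref{calFlem}. Namely, $(k_7)$ makes $k(t,s)$ completely continuous and $\{w_n(s)\}_\n$ is norm-bounded by $\mu(s)$, so $\beta(k(t,s)\{w_n(s)\}_\n)=0$ for almost every $s\in[0,t]$; since the set of functions $\{s\mapsto k(t,s)w_n(s)\}_\n$ is integrably dominated by $||k(t,\cdot)||_{{\mathcal L}}\mu(\cdot)\in L^1(I,\R{})$, Theorem~\ref{3}.(i) now immediately yields $\beta(\{x_n(t)\}_\n)=0$ for every $t\in I$, without any Gronwall step.

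Arzel\`a's theorem then supplies a subsequence $x_{k_n}\rightrightarrows x$. By Proposition~\ref{Niemytskij}. (using reflexivity of $E$ when $p>1$), the weak upper semicontinuity of $N_F^p$ produces a further subsequence with $w_{m_{k_n}}\rightharpoonup w\in N_F^p(x)$; since $V$ is a bounded linear operator, it is weakly continuous, so $x_{m_{k_n}}=h+V(w_{m_{k_n}})\rightharpoonup h+V(w)$, forcing $x=h+V(w)\in S^p_{\!F}$. The only subtle point is the domination that turns pointwise relative compactness of $\{k(t,s)w_n(s)\}_\n$ into the vanishing of $\beta$ on the integral, and this is already handled by $(F_4')$; no new difficulty arises beyond what was resolved in Lemma~\ref{calFlem}.
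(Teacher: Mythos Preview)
Your proposal is correct and follows essentially the same approach the paper intends: the corollary is stated without proof precisely because it is the obvious adaptation of Theorem~\ref{solset}., replacing the use of $(F_5)$ by the $(k_7)$-based estimate already displayed at the end of the proof of Lemma~\ref{calFlem}. Your handling of both nonemptiness (via the second alternative of Lemma~\ref{calFlem}.) and compactness (via $\beta(k(t,s)\{w_n(s)\}_\n)=0$ from complete continuity, then Theorem~\ref{3}.) matches this intended argument exactly.
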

\par The main result of this paper, concerning the geometric structure of the set of solutions of inclusion \eqref{inclusion}, contains the following:
\begin{theorem}\label{geometry}
Let $p\in[1,\infty)$, while the space $E$ is reflexive for $p\in(1,\infty)$. Assume that conditions $(k_1)$-$(k_4)$ and $(F_1)$-$(F_5)$ are fulfilled. If any of the following conditions holds:
\begin{itemize}
\item[$(E_1)$] $4\sup\limits_{t\in I}||k(t,\cdot)||_q||\eta||_p<1$, where $p^{-1}+q^{-1}=1$,
\item[$(E_2)$] the space $E$ is strictly convex,
\item[$(E_3)$] the space $E$ is separable, 
\end{itemize}
then the solution set $S^p_{\!F}$ of the integral inclusion \eqref{inclusion} is an $R_\delta$-set in the space $C(I,E)$.
\end{theorem}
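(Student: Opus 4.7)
The plan is to realize the compact set $S^p_{\!F}$ (compactness being granted by Theorem \ref{solset}) as a countable decreasing intersection of nonempty compact contractible subsets of $C(I,E)$, thereby verifying the $R_\delta$-property. The scheme is of Lasota--Opial type, adapted to the Volterra setting via the regularization afforded by Lemma \ref{equation} and the injectivity of $V$ from Lemma \ref{lem2}.

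First, for each $n\geqslant 1$ I would construct a Carath\'eodory approximation $f_n\colon I\times E\to E$ satisfying hypotheses (i)--(iii) of Lemma \ref{equation}, with uniform bound $|f_n(t,x)|\<\mu(t)$ (inherited from $(F_4')$) and the graph-proximity property $f_n(t,x)\in\overline{\co}F(t,\overline{B}(x,\eps_n))$ for some $\eps_n\to 0^+$. Such locally Lipschitz selections of a graph-neighbourhood of $F$ are produced by a Cellina--Lasota argument based on locally finite partitions of unity on $E$. Assumptions $(E_1)$--$(E_3)$ supply three independently sufficient ways to keep the Hausdorff MNC control of $(F_5)$ under this approximation: $(E_3)$ uses separability of $E$ together with the sharp constant from Theorem \ref{3}(ii); $(E_2)$ exploits the metric projection, rendered single-valued by Proposition \ref{singleton} in the spirit of Umanski; while $(E_1)$ directly substitutes a global contraction estimate that sidesteps the need to preserve $(F_5)$ altogether.

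Next, Lemma \ref{equation} yields, for every $h'\in C(I,E)$, a unique solution $\sigma_n(h')\in C(I,E)$ of the equation $x=h'+V\circ N^p_{\!f_n}(x)$, and the assignment $\sigma_n\colon C(I,E)\to C(I,E)$ is continuous. Here the injectivity of $V$ (Lemma \ref{lem2}) is decisive: any $x\in S^p_{\!F}$ admits a \emph{unique} representation $x=h+V(w)$ with $w\in N^p_F(x)$, so one may consistently define the tailor-made perturbation $h^x_n:=x-V(f_n(\cdot,x(\cdot)))$; by uniqueness $\sigma_n(h^x_n)=x$. Put $K_n:=\overline{\co}(\{h^x_n\colon x\in S^p_{\!F}\}\cup\{h\})$ and $X_n:=\sigma_n(K_n)$: the compactum $K_n$ is convex hence contractible, so $X_n$ is a compact contractible subset of $C(I,E)$ containing $S^p_{\!F}$, and the monotonicity $X_{n+1}\subseteq X_n$ is arranged by a nested refinement of the sequence $(f_n)$. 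To verify $\bigcap_{n\geqslant 1}X_n=S^p_{\!F}$, take $y\in\bigcap_{n\geqslant 1}X_n$ and write $y=\sigma_n(h'_n)$ with $h'_n\in K_n$; since $h^x_n\to h$ uniformly over the compactum $S^p_{\!F}$, one checks that $h'_n\to h$. The selections $f_n(\cdot,y(\cdot))$ are integrably bounded by $\mu$, so extracting a weakly convergent subsequence in $L^p(I,E)$ (by reflexivity if $p\in(1,\infty)$, or by Theorem \ref{2} if $p=1$) and invoking the convergence Theorem \ref{convergence} together with the graph-proximity of $f_n$ forces the weak limit to lie in $N^p_F(y)$, whence $y\in S^p_{\!F}$.

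The principal obstacle is the first step: requiring $f_n$ to be simultaneously locally Lipschitz in $x$ (to invoke Lemma \ref{equation}), to inherit an analogue of $(F_5)$ (so that the compactness estimates of Lemma \ref{calFlem} apply uniformly in $n$), and to approximate the graph of $F$ so finely that $h^x_n\to h$ uniformly on $S^p_{\!F}$, is not achievable by a single uniform construction over arbitrary Banach spaces. The trichotomy $(E_1)$--$(E_3)$ delineates exactly the three independently sufficient contexts in which this triple requirement can be met, which is why the theorem splits into these cases rather than being stated under a single hypothesis.
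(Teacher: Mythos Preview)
Your route departs from the paper's and contains a genuine gap: the claim that $h^x_n\to h$ uniformly over $S^p_{\!F}$ is false, and with it the whole verification of $\bigcap_n X_n\subset S^p_{\!F}$ collapses. Writing $x=h+V(w_x)$ with the unique $w_x\in N^p_F(x)$ (Lemma \ref{lem2}), one has $h^x_n-h=V\bigl(w_x-f_n(\cdot,x(\cdot))\bigr)$, and nothing forces the particular selection $f_n(\cdot,x(\cdot))$ of $\overline{\co}F(\cdot,\overline{B}(x(\cdot),\eps_n))$ to approach the particular selection $w_x$: if $F\equiv[-1,1]$ and $f_n\equiv 0$, then for $x=h+V(1)$ one gets $h^x_n=h+V(1)$ at fixed positive distance from $h$ for every $n$. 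Hence $K_n$ need not shrink to $\{h\}$ and $\bigcap_n X_n$ can strictly contain $S^p_{\!F}$. The monotonicity $X_{n+1}\subset X_n$, dismissed as ``arranged by a nested refinement'', is equally unsupported, since both $\sigma_n$ and $K_n$ vary with $n$.

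Your diagnosis of the role of $(E_1)$--$(E_3)$ also misses the actual mechanism. In the paper these hypotheses are not used to preserve $(F_5)$ for the approximations; they are used to construct a compact invariant set $X$ (in $E$ under $(E_1)$, in $C(I,E)$ under $(E_2)$ or $(E_3)$) and to replace $F$ by $\tilde F(t,x)=\overline{\co}F(t,Pr(t,x))$, whose values lie in a fixed compact range so that the approximating solution sets $S^p_n$ are compact without any MNC estimate. Strict convexity $(E_2)$ makes the projection $Pr$ single-valued (Proposition \ref{singleton}); separability $(E_3)$ is invoked only through Kuratowski--Ryll-Nardzewski to secure measurable selections of $\tilde F(\cdot,x)$, not through the constant in Theorem \ref{3}(ii). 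Contractibility of each $S^p_n$ is then proved by an explicit switching homotopy \eqref{homotopia}: one follows a given $y\in S^p_n$ up to time $sT$ and thereafter solves the Volterra \emph{equation} with integrand $f_n$ and inhomogeneity $g(t)=h(t)+\int_0^{sT}k(t,\tau)w_y(\tau)\,d\tau$; injectivity of $V$ enters to identify $w_y$, not to control a perturbation of $h$.
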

\begin{proof}
\par Ad $(E_1)$: We claim that there is a nonempty compact set $X\subset E$ possessing the following property:
\begin{equation}\label{X}
h(t)+\int_0^tk(t,s)\,\overline{\co}F(s,X)\,ds\subset X
\end{equation}
for every $t\in I$. The set $S^p_{\!F}$ is bounded by $M=||h||+\sup_{t\in I}||k(t,\cdot)||_q||\mu||_p$. Let $X_0=D(0,M)$ and $X_n=\overline{\bigcup_{t\in I}Y_n(t)}$, where
\[Y_n=\left\{y\in C(I,E)\colon y(t)\in h(t)+\int_0^tk(t,s)\,\overline{\co}F(s,X_{n-1})\,ds\mbox{ dla }t\in I\right\}.\] It is easy to verify by induction the inclusion $\bigcup_{t\in I}S^p_F(t)\subset X_n$ for $\n$. Observe as well that the family $\{X_n\}_\n$ is decreasing. Put $X=\bigcap_\n X_n$. \par Denote by ${\mathcal B}$ the family of all bounded subsets of the space $E$ and define a function $\w{\beta}\colon{\mathcal B}\to\R{}$ by the formula:
\begin{equation}\label{sequentialbeta}
\w{\beta}(\Omega)=\max\limits_{D\in\Delta(\Omega)}\beta(D),\;\Omega\in{\mathcal B}.
\end{equation}
The mapping $\w{\beta}$ is a measure of noncompactness generated by a sequential Hausdorff MNC and as such inherits properties of MNC $\beta$ (\cite[Th.1.4.2]{sadovski}). In particular, the estimations hold: $\beta(\Omega)\<2\w{\beta}(\Omega)\<2\beta(\Omega)$ for every $\Omega\in{\mathcal B}$ (\cite[Th.1.4.5]{sadovski}).\par Suppose that $\w{\beta}(Y_{n+1}(t))=\beta\left(\left\{h(t)+\int_0^tk(t,s)w_k(s)\,ds\right\}_{k\geqslant 1}\right)$. Then
\begin{equation}\label{equ7}
\begin{aligned}
\beta(Y_{n+1}(t))&\<2\w{\beta}(Y_{n+1}(t))\<4\!\!\int_0^t\!||k(t,s)||_{{\mathcal L}}\beta(\{w_k(s)\}_{k\geqslant 1})\,ds\<4\int_0^t||k(t,s)||_{{\mathcal L}}\beta(\overline{\co}F(s,X_n))\,ds\\&\<4\int_0^t||k(t,s)||_{{\mathcal L}}\eta(s)\beta(X_n)\,ds\<4||k(t,\cdot)||_q||\eta||_p\beta(X_n).
\end{aligned}
\end{equation}
Obviously, the sets $Y_n$ are equicontinuous in $C(I,E)$. So if we fix any sequence $(t_k)_\K$ of points from the compact interval $I$, then the convergence of some subsequence $(t_{m_k})_\K$ to the point $t_0\in I$ implies $d_H(Y_n(t_{m_k}),Y_n(t_0))\to 0$ as $k\to\infty$ (where $d_H$ is a Hausdorff metric). As a result we see that the family $\{Y_n(t)\}_{t\in I}$ is compact in the hyperspace of subsets with a Hausdorff metric. Applying the generalized formula for the measure of sum (\cite[Prop.2]{lesniak}) we get the equality: $\beta\left(\bigcup_{t\in I}Y_n(t)\right)=\sup_{t\in I}\beta(Y_n(t))$. Continuing estimation \eqref{equ7} we obtain
\[\beta(X_{n+1})=\beta\left(\bigcup_{t\in I}Y_{n+1}(t)\right)=\sup_{t\in I}\beta(Y_{n+1}(t))\<2\sup_{t\in I}\w{\beta}(Y_{n+1}(t))\<4\sup_{t\in I}||k(t,\cdot)||_q||\eta||_p\,\beta(X_n).\] The above inequality, together with the assumption $(E_1)$ implies convergence $\beta(X_n)\to 0$ as $n\to\infty$. Thus $\beta(X)=0$, which means that $X$ is a compact set. Since $X\subset X_{n-1}$ for $\n$, so for any $\n$ and $t\in I$ the inclusion $h(t)+\int_0^tk(t,s)\overline{\co}F(s,X)\,ds\subset\bigcup_{t\in I}Y_n(t)\subset X_n$ holds. Thus, the set $X$ has the property \eqref{X}.\par Relying on the compactness of the designed set $X$ we define a mapping $\w{F}\colon I\times E\map E$ by the formula
\[\w{F}(t,x)=\overline{\co}F(t,Pr_X(x)),\] where $Pr_X\colon E\map X$ is a metric projection onto $X$, i.e.
\[Pr_X(x)=\{y\in X\colon |x-y|=d_X(x)\}.\] Observe that so defined multimap satisfies conditions $(F_1)$, $(F_2)$, $(F_4')$ and $(F_5)$ (in fact $\w{F}$ is a compact map). Values of $\w{F}$ are nonempty, because $X$ is proximinal. Using upper semicontinuity and the compactness of values of $Pr_X$ and the condition $(F_3)$ we can match such a radius $\delta>0$ with $\eps>0$ that
\begin{align*}
\sigma(p,\overline{\co}F(t,Pr_X(B(x,\delta))))&=\sigma(p,F(t,Pr_X(B(x,\delta))))\<\sigma(p,F(t,B(Pr_X(x),\gamma)))\<\sigma(p,F(t,\!\!\bigcup_{y\in Pr_X(x)}\!\!\!\!B(y,\gamma_y)))\\&=\!\!\sup_{y\in Pr_X(x)}\!\!\sigma(p,F(t,B(y,\gamma_y)))<\!\!\sup_{y\in Pr_X(x)}\!\!\sigma(p,F(t,y))+\eps=\sigma(p,\overline{\co}F(t,Pr_X(x)))+\eps.
\end{align*}
This means that $\w{F}(t,\cdot)$ is upper hemicontinuous. Replacement of the map $F$ by the map $\w{F}$ does not change the set of solutions to inclusion \eqref{inclusion}, i.e. $S_{\!F}^p=S^p_{\!\w{F}}$.\par Relying on standard approximations (\cite[Lemma 2.2]{deimling}) of the map $\w{F}$ we will show that $S^p_{\!\w{F}}$ is an $R_\delta$-set. Let $r_n=3^{-n}$ for $\n$. Paracompactness of the space $E$ implies the existence of locally Lipschitzean partition of unity $\{\lambda_y^n\colon E\to[0,1]\}_{y\in E}$ such that the family of supports $\{\supp\lambda_y^n\}_{y\in E}$ forms a locally finite (closed) covering of the space $E$ inscribed into the covering $\{B(y,r_n)\}_{y\in E}$, i.e. $\supp\lambda_y^n\subset B(y,r_n)$ for $y\in E$. If $F_n\colon I\times E\map E$ is a multimap defined by the formula
\begin{equation}\label{approx}
F_n(t,x)=\sum_{y\in E}\lambda_y^n(x)\,\overline{\co}\w{F}(t,B(y,2r_n))
\end{equation}
then we have the following string of inclusions
\begin{equation}\label{fn}
\w{F}(t,x)\subset F_{n+1}(t,x)\subset F_n(t,x)\subset\overline{\co}\,\w{F}(t,B(x,3r_n))\subset\overline{\co}F(t,X)
\end{equation}
for every $(t,x)\in I\times E$ and $n\geqslant 1$. Denote by $S_n^p$ the set of solutions to inclusion \eqref{inclusion}, where $F$ is replaced by the mapping $F_n$. From \eqref{fn} it follows that the sets $S_n^p$ form a deacreasing family and that $S^p_{\w{F}}\subset\bigcap_{\n}S_n^p$.
\par Let $x\in\bigcap_{\n}S^p_n$. There is a sequence $(x_n)_\n$ convergent to $x$ uniformly on $I$ such that $x_n\in S^p_n$ for $\n$. If $x_n=h+V(w_n)$, where $w_n\in N_{F_n}^p(x_n)$, then the sequence $(w_n)_\n$ contains a subsequence $(w_{k_n})_\n$ weakly convergent to some function $w$ (use either Theorem \ref{2}. for weakly compact set $\overline{\co}F(t,X)$ or Eberlein-\v Smulian theorem in case $p>1$). Since $w_n(t)\in F_n(t,x_n(t))\subset\overline{\co}\w{F}(t,B(x_n(t),3r_n))$ for almost all $t\in I$, so $w(t)\in\w{F}(t,x(t))$ almost everywhere in $I$, by Theorem \ref{convergence}. (note that mappings $F_n(t,\cdot)$ are actually locally Lipschitzean for sufficiently large n). On the other hand, from $x_n=h+V(w_n)\rightharpoonup h+V(w)$ follows that $x=h+V(w)$, i.e. $x\in S^p_{\w{F}}$. Similarly we can show that the sets $S_n^p$ are closed.
\par Fix $\n$ and take a sequence $(x_k)_\K$ of elements of the set $S_n^p$, i.e. $x_k=h+V(w_k)$, where $w_k\in N_{F_n}^p(x_k)$. Taking into account that $|w_k(t)|\<||F_n(t,x_k(t))||^+\<\mu(t)$ for $\K$ and for almost all $t\in I$, we obtain
\[|x_k(t)-x_k(\tau)|\<|h(t)-h(\tau)|+||k(t,\cdot)-k(\tau,\cdot)||_q||\mu||_p+||k(\tau,\cdot)||_q\left(\int_t^\tau\mu(s)^pds\right)^{\frac{1}{p}}\]
for every $\K$. Consequently, the family $\{x_k\}_\K$ is equicontinuous. For every $t\in I$ the true is that
\[\{x_k(t)\}_\K=\left\{h(t)+\int_0^tk(t,s)w_k(s)\,ds\right\}_\K\subset h(t)+\int_0^tk(t,s)\overline{\co}F(s,X)\,ds\subset X.\] Therefore the set $S_n^p$ is compact in $C(I,E)$.\par For every $\n$ we define a function $f_n\colon I\times E\to E$ in the following way:
\begin{equation}\label{selection}
f_n(t,x)=\sum_{y\in E}\lambda_y^n(x)g_y(t)\in F_n(t,x),
\end{equation}
where $g_y$ is a strongly measurable selection of $\w{F}(\cdot,y)$, existing in view of $(F_2)$. Locally finite nature of the summation in the definition \eqref{selection} together with the locally Lipschitzeanity of the partition of unity $\{\lambda_y^n\colon E\to[0,1]\}_{y\in E}$ means that for any compact subset $K\subset E$ there are constants $\gamma>0$ and $\delta>0$ such that $|f_n(t,x_1)-f_n(t,x_2)|\<\gamma\mu(t)|x_1-x_2|$  for all $t\in I$ and $x_1, x_2\in D (K,\delta)$. Obviously, the mappings $f_n$ remain integrably bounded by $\mu$, while $f_n(\cdot,x)$ are strongly measurable. Thus the thesis of Lemma \ref{equation}. applies to the functions $f_n$.\par Assign to any solution $y\in S^p_n$ a function $w_y\in L^p(I,E)$ such that $w_y\in N_{F_n}^p(y)$ and $y=h+V(w_y)$. Lemma \ref{lem2}. justifies the uniqueness of this choice. Consider continuous mappings having the following form
\[I\ni t\mapsto h(t)+\int_0^{sT}\!\!k(t,\tau)w_y(\tau)\,d\tau,\]
where the points $s\in[0,1]$ and $y\in S^p_n$ are fixed. In view of Lemma \ref{equation}. the following integral equation
\[x(t)=h(t)+\int_0^{sT}\!\!k(t,\tau)w_y(\tau)\,d\tau+\int_{sT}^tk(t,\tau)f_n(\tau,x(\tau))\,d\tau\]
posessess a unique solution on the interval $[sT,T]$. Let us denote this solution by $x[s;y]$.\par We claim that sets $S^p_n$ are contractible. Define a homotopy $H_n\colon[0,1]\times S^p_n\to S^p_n$ by the formula:
\begin{equation}\label{homotopia}
H_n(s,y)(t)=
\begin{cases}
y(t)&\mbox{for }t\in[0,sT],\\
x[s;y](t)&\mbox{for }t\in[sT,T].
\end{cases} 
\end{equation}\setcounter{Page}{\value{page}}
The reception of such a definition means that $H_n(1,y)=x[1;y]=y$. On the other hand $H_n(0,y)=x[0;y]=x[0]$ is a unique solution of the equation
\[x(t)=h(t)+\int_0^tk(t,s)f_n(s,x(s))ds,\;t\in I.\]
Since $x[0]\in h+V\circ N_{f_n}^p(x[0])\subset h+V\circ N_{F_n}^p(x[0])$, so the point $x[0]\in S^p_n$. We will show that ``the contraction'' of the set $S^p_n$ to the point $x[0]$ has a continuous character.
\par Let $(y_k)_\K$ be convergent to $y_0$ in the space $S^p_n$ endowed with the uniform convergence metric, while $s_k\to s_0$ in the segment $[0,1]$. Limiting our considerations to the case $s_kT\nearrow s_0T$, we see that functions $x_k=x[s_k;y_k]$ stand for solutions of integral equations:
\[x_k(t)=h(t)+\int_0^{s_kT}\!\!k(t,\tau)w_{y_k}(\tau)\,d\tau+\int_{s_kT}^tk(t,\tau)f_n(\tau,x_k(\tau))d\tau,\;t\in[s_kT,T].\] Let us define functions $g_k\colon I\to E$ by the formula:
\[g_k(t)=h(t)+\int_0^{s_kT}\!\!k(t,\tau)w_{y_k}(\tau)\,d\tau.\] Recall at this point that $k(t,s)=0$ for $t<s$. Let $V_{s_0T}\colon L^p(I,E)\to C(I,E)$ be the following integral operator:
\[V_{s_0T}(w)(t)=\int_0^{s_0T}\!\!k(t,\tau)w(\tau)\,d\tau,\;t\in I.\] At that time $V_{s_0T}(w_{y_k})\rightrightarrows V_{s_0T}(w_{y_0})$, with a precision of a subsequence. Indeed,
\begin{equation}\label{V_sT}
\begin{aligned}
\beta(\{V_{s_0T}(w_{y_k})(t)\}_\K)\<2\!\!\int_0^{s_0T}\!\!\!\!\!\!||k(t,\tau)||_{{\mathcal L}}\beta(\overline{\co}F(\tau,X))\,d\tau\<2\beta(X)\sup_{t\in I}||k(t,\cdot)||_q\left(\int_0^{s_0T}\eta(\tau)^pd\tau\right)^\frac{1}{p},
\end{aligned}
\end{equation}
i.e. $\sup_{t\in I}\beta(\{V_{s_0T}(w_{y_k})(t)\}_\K)=0$. Equicontinuity of $\{V_{s_0T}(w_{y_k})\}_\K$ is a straight consequence of the estimation
\[|V_{s_0T}(w_{y_k})(t)-V_{s_0T}(w_{y_k})(\tau)|\<\int_0^{s_0T}\!\!||k(t,z)-k(\tau,z)||_{{\mathcal L}}|w_{y_k}(z)|\,dz\<||k(t,\cdot)-k(\tau,\cdot)||_q||\mu||_p.\] Therefore $(V_{s_0T}(w_{y_k}))_\K$ contains uniformly convergent subsequence $(V_{s_0T}(w_{y_{m_k}}))_\K$. Concurrently, there is a subsequence (again denoted by) $(w_{y_{m_k}})_\K$ such that $w_{y_{m_k}}\rightharpoonup w$ in $L^p(I,E)$. The weak continuity of operator $V$ implies: $h+V(w_{y_{m_k}})\rightharpoonup h+V(w)$. Since $y_k\rightrightarrows y_0=h+V(w_{y_0})$, we see that $V(w)=V(w_{y_0})$. In view of Lemma \ref{lem2}. we infer that $w_{y_{m_k}}\rightharpoonup w_{y_0}$ in $L^p(I,E)$. Of course, $V_{s_0T}$ is also linear and continuous, so $V_{s_0T}(w_{y_{m_k}})\rightharpoonup V_{s_0T}(w_{y_0})$. Finally: $V_{s_0T}(w_{y_{m_k}})\rightrightarrows V_{s_0T}(w_{y_0})$.\par Let us represent functions $g_k$ in the following form:
\[g_k(t)=h(t)+V_{s_0T}(w_{y_k})-\int_{s_kT}^{s_0T}\!\!k(t,\tau)w_{y_k}(\tau)\,d\tau.\] Given that
\[\sup_{t\in I}\left|\int_{s_kT}^{s_0T}\!\!k(t,\tau)w_{y_k}(\tau)\,d\tau\right|\<\sup_{t\in I}||k(t,\cdot)||_q\left(\int_{s_kT}^{s_0T}\mu(\tau)^p\right)^\frac{1}{p}\xrightarrow[k\to\infty]{}0,\] there is a subsequence of $(g_k)_\K$ uniformly convergent on the interval $I$ to the mapping $g\colon I\to E$ such that \[g(t)=h(t)+\int_0^{s_0T}\!\!k(t,\tau)w_{y_0}(\tau)\,d\tau.\] We have the following estimations:
\[\sup_{t\in[0,s_kT]}|H_n(s_k,y_k)(t)-H_n(s_0,y_0)(t)|\<\sup_{t\in[0,s_kT]}|y_k(t)-y_0(t)|\<||y_k-y_0||\xrightarrow[k\to\infty]{}0,\]
\begin{align*}
\sup_{t\in[s_kT,s_0T]}\!\!\!\!\!|H_n(s_k,y_k)(t)-H_n(s_0,y_0)(t)|\<||g_k-g||+2\sup_{t\in I}||k(t,\cdot)||_q\left(\int_{s_kT}^{s_0T}\mu(\tau)^pd\tau\right)^\frac{1}{p}\xrightarrow[k\to\infty]{}0
\end{align*} and
\[\sup_{t\in[s_0T,T]}\!\left|g_k(t)+\!\!\int_{s_kT}^{s_0T}\!\!\!\!\!\!k(t,\tau)f_n(\tau,x_k(\tau))\,d\tau-g(t)\right|\<||g_k-g||+\sup_{t\in I}||k(t,\cdot)||_q\left(\int_{s_kT}^{s_0T}\mu(\tau)^pd\tau\right)^\frac{1}{p}\underset{k\to\infty}{\longrightarrow}0.\] On the basis of the last inequality and the thesis of Lemma \ref{equation}. (in the part concerning continuous dependence of solutions to integral equation on initial conditions) we find that solutions $x_k$ of the form
\[x_k(t)=g_k(t)+\int_{s_kT}^{s_0T}k(t,\tau)f_n(\tau,x_k(\tau))\,d\tau+\int_{s_0T}^tk(t,\tau)f_n(\tau,x_k(\tau))\,d\tau\] tend uniformly on the interval $[s_0T,T]$ to the function $x[s_0;y_0]$, which is the solution of equation
\[x(t)=g(t)+\int_{s_0T}^tk(t,\tau)f_n(\tau,x(\tau))\,d\tau.\] According to the definition \eqref{homotopia} this convergence means that $\sup_{t\in[s_0T,T]}|H_n(s_k,y_k)(t)-H_n(s_0,y_0)(t)|\to 0$ as $k\to\infty$. Consequently, $||H_n(s_k,y_k)-H_n(s_0,y_0)||\to 0$ as $k\to\infty$, i.e. $H_n$ is a continuous map.\par Summing up the set $S_{\!\w{F}}^p$ is an intersection of a decreasing sequence of contractible compacta, i.e. $S_{\!F}^p$ is an  $R_\delta$-set. 
\par Ad $(E_2)$: We claim that there is a nonempty compact convex set $X\subset C(I,E)$ possessing the following property:
\begin{equation}\label{X2}
h(t)+\int_0^tk(t,s)\,\overline{\co}F(s,X(s))\,ds\subset X(t)
\end{equation}
for every $t\in I$. Let $X_0=D(0,M)$ and $X_n=\overline{Y_n}$, where $M$ is such that $||S_{\!F}^p||^+\<M$. Put
\begin{equation}\label{Yn}
Y_n=\left\{y\in C(I,E)\colon y(t)\in h(t)+\int_0^tk(t,s)\,\overline{\co}F(s,\overline{X_{n-1}(s)})\,ds\mbox{ dla }t\in I\right\}.
\end{equation}
Using the induction step it is easy to justify the inclusion $S_F^p\subset\bigcap_\n X_n$, as well as the convexity of sets $X_n$, which we owe to the envelope $\overline{\co}F(\cdot,\overline{X_{n-1}(\cdot)})$. Thanks to the equicontinuity of $Y_n$ we can apply Michael's theorem to the mapping $I\ni t\mapsto\overline{X_n(t)}\subset E$ and obtain a nonempty set of integrable selections of the multimap $t\mapsto\overline{\co}F(t,\overline{X_n(t)})$. This proves, of course, the correctness of definition \eqref{Yn}.\par Bearing in mind that $\lim_{L\to\infty}\varphi(L)=0$, where $\varphi$ is a mapping given by \eqref{fi}, we choose $L>0$ such that the inequality holds:
\begin{equation}\label{varphi}
\varphi(L)<\left(4\sup_{t\in I}||k(t,\cdot)||_q\right)^{-1}.
\end{equation}
Suppose that
\[\nu_L(Y_n)=\left(\sup_{t\in I}e^{-Lt}\beta(\{v_k(t)\}_\K),\modulus_C(\{v_k\}_\K)\right),\]
where $\nu_L$ is an MNC on the space $C(I,E)$, defined by \eqref{measure}. Then
\begin{align*}
\beta(\{v_k(t)\}_\K)&\<2\int_0^t||k(t,s)||_{{\mathcal L}}\beta(\{w_k(s)\}_\K)\,ds\<2\int_0^t||k(t,s)||_{{\mathcal L}}\beta(\overline{\co}F(s,\overline{X_{n-1}(s)}))\,ds\\&\<2\int_0^t||k(t,s)||_{{\mathcal L}}\eta(s)\beta(X_{n-1}(s))\,ds\<2||k(t,\cdot)||_q\left(\int_0^t(\eta(s)e^{Ls})^p\,ds\right)^\frac{1}{p}\sup_{t\in I}e^{-Lt}\beta(X_{n-1}(t)),
\end{align*}
that is
\begin{align*}
\sup_{t\in I}e^{-Lt}\beta(\{v_k(t)\}_\K)&\<2\sup_{t\in I}||k(t,\cdot)||_q\varphi(L)\sup_{t\in I}e^{-Lt}\beta(X_{n-1}(t))\<4\sup_{t\in I}||k(t,\cdot)||_q\varphi(L)\sup_{t\in I}e^{-Lt}\w{\beta}(X_{n-1}(t))\\&\<4\sup_{t\in I}||k(t,\cdot)||_q\varphi(L)\max_{D\in\Delta(X_{n-1})}\sup_{t\in I}e^{-Lt}\beta(D(t)).
\end{align*}
Taking into account also the second, trivial inequality
\[\modulus_C(\{v_k\}_\K))\<4\sup_{t\in I}||k(t,\cdot)||_q\varphi(L)\max_{D\in\Delta(X_{n-1})}\modulus_C(D)\]
we gain the estimation
\[\nu_L(X_n)=\nu_L(Y_n)\<4\sup_{t\in I}||k(t,\cdot)||_q\varphi(L)\nu_L(X_{n-1}).\]
Using assumption \eqref{varphi} we see that $\nu_L(X_n)\to 0$ as $n\to\infty$. Accordingly $\nu_L(\bigcap_\n X_n)=0$, so the intersection $\bigcap_\n X_n$ is a compact set. Let us put $X=\bigcap_\n X_n$. Now property \eqref{X2} easily follows form the fact that
\[\left\{y\in C(I,E)\colon y(t)\in h(t)+\int_0^tk(t,s)\overline{\co}F(s,X(s))\,ds\mbox{ dla }t\in I\right\}\subset Y_n\] for every $\n$. \par Let $Pr\colon I\times E\map E$ be ``the metric projection with a parameter'', defined in the following way:
\begin{equation}
Pr(t,x)=\{y\in X(t)\colon |x-y|=d_{X(t)}(x)\}.
\end{equation}
See \cite{umanski} to convince oneself that $Pr$ is upper semicontinuous. Moreover, from Proposition \ref{singleton}. it follows that the projection $Pr$ is a singlevalued map. As a result the function $Pr\colon I\times E\to $E must be continuous. In this case the map $\w{F}\colon I\times E\map E$ should be defined as follows:
\begin{equation}\label{tyldaF}
\w{F}(t,x)=\overline{\co}F(t,Pr(t,x)).
\end{equation}
The rest of the proof only slightly differs from the reasoning of the previous point. First, we note that $\w{F}$ satisfies assumptions $(F_1)$-$(F_3)$, $(F_4')$ and $(F_5)$. Applying property \eqref{X2} we show that $S_F^p=S_{\w{F}}^p$. The set of solutions $S_{\!\w{F}}^p$ can be represented in the form of a countable intersection of a deacreasing sequence of compact solution sets $S^p_n$ to integral inclusions corresponding to multivalued approximations $F_n$ such that $F_n(t,x)\subset\overline{\co}F(t,X(t))$ for $(t,x)\in I\times E$. It should be emphasized that the construction of mappings $f_n\colon I\times E\to E$, defined by \eqref{selection}, is based on the possibility of a choice of a strongly measurable selection of the map $F(\cdot,Pr(\cdot,x))$ (remember Proposition \ref{Niemytskij}.). Resting on the uniqueness of solutions to integral equations \[x(t)=g(t)+\int_{sT}^tk(t,\tau)f_n(\tau,x(\tau))\,d\tau,\] we prove the contractibility of sets $S_n^p$. Summarizing we assert that $S_{\!F}^p$ is an $R_\delta$-set.
\par Ad $(E_3)$: Let $\w{F}$ be the set-valued map defined by \eqref{tyldaF}. The separability assumption on the space $E$ does not guarantee the uniqueness of the best approximation realised by the projection $Pr$, but it does ensure the existence of strongly measurable selections of the multivalued map $\w{F}(\cdot,x)$ for every $x\in E$. Indeed, Kuratowski-Ryll Nardzewski theorem (\cite[Th.19.7]{gorn}) implies that the upper semicontinuous, and therefore measurable, map $Pr(\cdot,x)$ possesses a measurable selection $f\colon I\to E$. In view of Pettis measurability theorem (\cite[Th.3.1.3]{talagrand}) we can choose a sequence of simple functions $(f_n)_\n$ converging to $f$ almost everywhere on $I$. If we suppose that $f_n(t)=\sum_{i=1}^{k_n}a^n_i\chi_{A_i^n}(t)$, then there are functions $w_i^n\in L^p(I,E)$ such that $w_i^n(t)\in F(t,a_i^n)$ for almost all $t\in I$ (consequence of assumptions $(F_2)$ and $(F_4')$). Let $w_n=\sum_{i=1}^{k_n}w_i^n\chi_{A_i^n}$. The function $w_n$ is also Bochner integrable and $w_n(t)\in F(t,f_n(t))$ for almost all $t\in I$. If $p=1$, we see that for almost all $t\in I$ the set $\{w_n(t)\}_\n$ is contained in the weakly compact set $F(t,\overline{\{f_n(t)\}_\n})$. Thus the sequence $(w_n)_\n$ is relatively weakly compact in $L^1(I,E)$, thanks to Theorem \ref{1}. Yet the Eberlein-\v Smulian theorem implies the relative weak compactness of the sequence $(w_n)_\n$ for $p\in(1,\infty)$. Denote by $w$ the weak limit of some subsequence $(w_{k_n})_\n$. Applying Theorem \ref{convergence}. we infer that $w(t)\in F(t,f(t))$ almost everywhere in $I$. Thus $w(t)\in F(t,Pr(t,x))$ for almost all $t\in I$, i.e. $w$ is a sought strongly measurable selection of the map $\w{F}(\cdot,x)$. The rest of the proof involves copying of arguments quoted to justify the points $(E_1)$ and $(E_2)$.
\end{proof}
\begin{corollary}\label{geometry2}
Let $p\in[1,\infty)$, while the space $E$ is reflexive for $p\in(1,\infty)$. Assume that conditions $(F_1)$-$(F_4')$ and $(k_1)$-$(k_4)$ together with $(k_7)$ are fulfilled. Then the set of solutions $S^p_{\!F}$ of the integral inclusion \eqref{inclusion} is an $R_\delta$-set in the space $C(I,E)$.
\end{corollary}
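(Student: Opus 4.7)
The plan is to run the argument of Theorem \ref{geometry}$(E_1)$ almost verbatim, using assumption $(k_7)$ in place of the two roles there played by $(F_5)$ and by the smallness condition. The underlying principle is that complete continuity of $k(t,s)$ makes every integrand $k(t,s)F(s,\cdot)$ compact-valued on bounded sets, so the Heinz estimates of Theorem \ref{3} force the Hausdorff measure of the relevant integrals to vanish without any $\beta$-regularity of $F$.

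First I would construct a compact invariant set $X\subset E$ satisfying $h(t)+\int_0^t k(t,s)\overline{\co}F(s,X)\,ds\subset X$ for every $t\in I$. Setting $X_0=D(0,M)$ with $M=\|h\|+\sup_{t\in I}\|k(t,\cdot)\|_q\|\mu\|_p$ and recursively $X_n=\overline{\bigcup_{t\in I}Y_n(t)}$ as in the original proof, the crucial estimate becomes $\beta(Y_n(t))\<2\int_0^t\beta(k(t,s)\,\overline{\co}F(s,X_{n-1}))\,ds=0$ directly from $(k_7)$ and the majorant $\mu$. Equicontinuity of each $Y_n$ (same H\"older estimate as in \eqref{equicon}) promotes pointwise relative compactness to compactness of $X_n$, and $X=\bigcap_\n X_n$ is compact; induction on $n$ delivers $\bigcup_{t\in I}S^p_F(t)\subset X$.

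Next I would truncate $F$ through the multivalued metric projection onto $X$, defining $\w{F}(t,x)=\overline{\co}F(t,Pr_X(x))$. Upper hemicontinuity of $\w{F}(t,\cdot)$ follows from upper semicontinuity of $Pr_X$ with compact values combined with $(F_3)$, by the same $\sigma(p,\cdot)$ computation as in Theorem \ref{geometry}, while invariance of $X$ forces every solution $x\in S^p_{\w{F}}$ to satisfy $x(I)\subset X$, hence $S^p_F=S^p_{\w{F}}$. The Deimling approximations \eqref{approx}-\eqref{selection} can still be manufactured without separability of $E$: for each $y\in E$ I would pick any $z_y\in Pr_X(y)\subset X$ and any strongly measurable selection $g_y$ of $F(\cdot,z_y)$ (available by $(F_2)$); since $g_y(t)\in F(t,z_y)\subset\w{F}(t,y)$, the partition-of-unity construction proceeds unchanged. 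Lemmas \ref{equation} and \ref{lem2} then power the contracting homotopy \eqref{homotopia} on each level set $S^p_n$.

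The main obstacle will be the identification $S^p_{\w{F}}=\bigcap_\n S^p_n$. Compactness of each $S^p_n$ repeats the estimates from Corollary \ref{solset2}. For the inclusion $\bigcap_\n S^p_n\subset S^p_{\w{F}}$, given $x_n\in S^p_n$ with $x_n\rightrightarrows x$ I would extract a weakly convergent subsequence from the bounded family of associated selections $w_n$. When $p\in(1,\infty)$ this is Eberlein--\v{S}mulian in the reflexive space $L^p(I,E)$. The delicate case is $p=1$, where applying Theorem \ref{2} requires weak relative compactness of $\{w_n(t)\}_\n$ for almost every $t$; this no longer follows from $(F_5)$, but I would recover it from the fact that $F(t,X)$ is weakly sequentially relatively compact, via the Bothe sequential characterization of upper hemicontinuous multimaps with weakly compact convex values applied to the norm-compact $X$, combined with Eberlein--\v{S}mulian. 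Theorem \ref{convergence} then places the weak limit in $\w{F}(\cdot,x(\cdot))$, and the $R_\delta$ structure of $S^p_F$ follows as in Theorem \ref{geometry}.
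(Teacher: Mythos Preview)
Your proposal is correct and follows exactly the route taken in the paper: rerun the $(E_1)$ branch of Theorem~\ref{geometry}, replacing the two $(F_5)$-based estimates \eqref{equ7} and \eqref{V_sT} by the observation that $(k_7)$ forces $\beta(k(t,s)\{w_k(s)\}_{k\geqslant 1})=0$ under the integral sign, so that $\beta(Y_{n+1}(t))=0$ and $\beta(\{V_{s_0T}(w_{y_k})(t)\}_\K)=0$ outright, dispensing with both the $\beta$-regularity of $F$ and the smallness bound $(E_1)$. Your handling of the $p=1$ weak compactness step via $\overline{\co}F(t,X)$ is also the same device the paper uses in the $(E_1)$ proof, so nothing new is needed there.
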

\begin{proof}
Scheme of proof is analogous to the justification of the point (E1). Recalling the assumption $(k_7)$ in place of the hypothesis $(F_5)$ estimate \eqref{equ7} and \eqref{V_sT} will gain the following form (taking into consideration that $\{w_k(t)\}_\K$ and $\{w_{y_k}(t)\}_\K$ are bounded in $E$ for almost all $t\in I$)
\[\beta(Y_{n+1}(t))\<2\w{\beta}(Y_{n+1}(t))=2\beta\left(\left\{h(t)+\int_0^t\!k(t,s)w_k(s)\,ds\right\}_{k\geqslant 1}\right)\<4\!\!\int_0^t\!\beta(k(t,s)\{w_k(s)\}_{k\geqslant 1})\,ds=0\] and
\[\beta(\{V_{s_0T}(w_{y_k})(t)\}_\K)=\beta\left(\left\{\int_0^{s_0T}k(t,\tau)w_{y_k}(\tau)\,d\tau\right\}_\K\right)\<2\int_0^{s_0T}\beta(k(t,\tau)\{w_{y_k}(\tau)\}_{k\geqslant 1})\,d\tau=0.\]
\end{proof}\?
\par The fact that a sufficient condition for the existence of fixed points of the operator of translation along the trajectories is acyclicity of his components speaks for weakening of the assumptions upon which the proof of Theorem \ref{geometry}. rests on. It managed to oust the hypothesis $(E_1)$, $(E_2)$ and $(E_3)$ in the next theorem, using the results published in \cite{gelman}.
\begin{theorem}\label{acyclic}
Let $p\in[1,\infty)$, while the space $E$ is reflexive for $p\in(1,\infty)$. Assume that conditions $(F_1)$-$(F_5)$ and $(k_1)$-$(k_4)$ are fulfilled. Then the set of solutions $S^p_{\!F}$ of the integral inclusion \eqref{inclusion} is acyclic.
\end{theorem}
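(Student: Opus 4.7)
The plan is to adapt the scheme of Theorem \ref{geometry}, replacing the construction of a compact invariant set $X$ (which required one of $(E_1)$, $(E_2)$, $(E_3)$) by a direct acyclicity argument for the approximating solution sets, drawing on a result from \cite{gelman} together with the injectivity of the Volterra operator $V$ secured by Lemma \ref{lem2}. As a preliminary step I would reduce to the bounded case via hypothesis $(F_4')$ and the radial retraction trick described just before Theorem \ref{solset}, so that from the outset $F$ satisfies $||F(t,x)||^+\<\mu(t)$ with $\mu\in L^p(I,\R{})$; Theorem \ref{solset} then already supplies that $S^p_F$ is nonempty and compact.

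Next I would construct the locally Lipschitz approximations $F_n$ precisely as in formula \eqref{approx} of Theorem \ref{geometry}, together with the single-valued selections $f_n$ from \eqref{selection}. A routine check mirroring the corresponding step in Theorem \ref{geometry} shows that the solution sets $S^p_n$ of the inclusion driven by $F_n$ are compact, form a decreasing family, and that the convergence theorem (Theorem \ref{convergence}) combined with the weak compactness provided by Proposition \ref{Niemytskij} yields $\bigcap_\n S^p_n=S^p_F$. Since \v Cech cohomology is continuous under decreasing limits of compacta, the acyclicity of $S^p_F$ will follow once each $S^p_n$ is shown to be acyclic.

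The main obstacle is precisely this last point. The injectivity of $V$ (Lemma \ref{lem2}) lets us identify $S^p_n$ homeomorphically with the fixed point set of the operator $w\mapsto N^p_{F_n}(h+V(w))$ acting on $L^p(I,E)$, which has convex weakly compact values and is weakly upper semicontinuous. Because $f_n$ is locally Lipschitz in the state variable, Lemma \ref{equation} supplies uniqueness and continuous dependence on the forcing term for the single-valued equation $x=h+V\circ N^p_{\!f_n}(x)$. This is the setting in which the acyclicity scheme of \cite{gelman} applies: one parametrises $S^p_n$ by a contractible family of admissible selections of $F_n$ and uses unique solvability of each selected equation to exhibit a Vietoris-type map from that parameter space onto $S^p_n$, from which acyclicity of $S^p_n$ follows.

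The hardest piece will be verifying that this parametrisation really does realise the hypotheses of the acyclicity theorem from \cite{gelman} --- in particular the upper semicontinuity and acyclic-valuedness of the preimage map --- without the compact set $X$ that was at our disposal in Theorem \ref{geometry}. One has to rely solely on the $\nu_L$-condensing estimates built from $(F_5)$ combined with the injectivity bound encoded by $(k_1)$-$(k_4)$, and it is precisely the absence of the compact carrier $X$ that forces the conclusion to weaken from the $R_\delta$-property down to plain acyclicity.
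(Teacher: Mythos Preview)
There is a genuine gap in your plan: the claim that the approximating solution sets $S^p_n$ are compact is precisely what fails once you drop $(E_1)$, $(E_2)$, $(E_3)$. In Theorem~\ref{geometry} the compactness of $S^p_n$ was obtained because the compact carrier $X$ forced $\{x_k(t)\}_\K\subset X$. Without such an $X$, the approximation $F_n$ only satisfies $F_n(t,x)\subset\overline{\co}\,F(t,B(x,3r_n))$, so for any sequence $(x_k)_\K\subset S^p_n$ the estimate coming from $(F_5)$ is
\[
\beta(\{x_k(t)\}_\K)\;\<\;2\int_0^t||k(t,s)||_{{\mathcal L}}\,\eta(s)\bigl(\beta(\{x_k(s)\}_\K)+3r_n\bigr)\,ds,
\]
and the Gronwall argument yields only $\sup_{t\in I}\beta(\{x_k(t)\}_\K)\<Mr_n$ with $M>0$, not zero. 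Thus $S^p_n$ need not be compact (nor does $F_n$ satisfy $(F_5)$, so Theorem~\ref{solset} cannot be invoked for it). Consequently your intended route --- show each $S^p_n$ acyclic and use continuity of \v Cech cohomology on a decreasing tower of compacta --- collapses at the very first step.

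The paper's proof avoids this entirely. It never asserts that $S^p_n$ is compact or acyclic. Instead, the bound $Mr_n$ is used to prove that for every $\eps>0$ one has $S^p_n\subset B(S^p_F,\eps)$ for all sufficiently large $n$; then a compact set $X$ is manufactured \emph{a posteriori} as $X=\bigcup_\n H_n([0,1]\times S^p_F)\cup S^p_F$, where the homotopies $H_n$ from \eqref{homotopia} are restricted to the already compact set $S^p_F$ (Theorem~\ref{solset}). On this $X$ a single parametrised family $g_{n_\eps}\colon[0,1]\times S^p_F\times X\to C(I,E)$ is built from the selections $f_n$ and shown to meet the hypotheses (i)--(iv) of \cite[Th.~4.3]{gelman}; that abstract result is then applied \emph{once}, directly to the fixed-point set $\{x\in X:\ x\in{\mathcal F}(x)\}=S^p_F$, giving acyclicity. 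The injectivity of $V$ (Lemma~\ref{lem2}) enters in the same way you anticipated, to make the assignment $y\mapsto w_y$ well defined and to secure the required weak convergences.
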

\begin{proof}
Let $(F_n\colon I\times E\map E)_\n$ be a sequence of set-valued approximations of the map $F$, which are defined in analogous manner to \eqref{approx}. As before, set $S_n^p$ denotes the set of solutions to inclusion \eqref{inclusion}, where $F$ is replaced by the mapping $F_n$.
\par It is easy to see that $\bigcap_{\n}\overline{S^p_n}=S^p_F$. Inclusion $S^p_F\subset\bigcap_{\n}\overline{S^p_n}$ is straightforward. Take $x\in\bigcap_{\n}\overline{S^p_n}$. Then there is a sequence $(x_n)_\n$ convergent to $x$ uniformly on $I$ such that $x_n\in S^p_n$ for $\n$. Clearly, we have a $w_n\in N_{F_n}^p(x_n)$, for which $x_n=h+V(w_n)$. Using \eqref{fn} we obtain
\[\{w_n(t)\}_{n=N}^\infty\subset\bigcup_{n=N}^\infty F_N(t,x_n(t))\subset\overline{\co}F(t,\bigcup_{n=N}^\infty B(x_n(t),3r_N))=\overline{\co}F(t,B(\{x_n(t)\}_{n=N}^\infty,3r_N))\]
for every $N\geqslant 1$. Assumption $(F_5)$ and properties of the Hausdorff MNC imply the estimation:
\[\beta(\{w_n(t)\}_\n)=\beta(\{w_n(t)\}_{n=N}^\infty)\<\beta(\overline{\co}F(t,B(\{x_n(t)\}_{n=N}^\infty,3r_N)))\<\eta(t)\beta(B(\{x_n(t)\}_{n=N}^\infty,3r_N))\<3\eta(t)r_N\]
for every $N\geqslant 1$. Since $\eta\in L^p(I,\R{})$, so $\eta(t)<+\infty$ for almost all $t\in I$. Consequently, $\beta(\{w_n(t)\}_\n)=0$ for almost all $t\in I$. Applying \"Ulger's criterion (Theorem \ref{2}.) for the case $p=1$ and Eberlein-\v{S}mulian theorem in all other cases we get a subsequence $(w_{k_n})_\n$ of $(w_n)_\n$, weakly convergent to some function $w$. In view of Theorem \ref{convergence}. it becomes evident that $w(t)\in F(t,x(t))$ for almost all $t\in I$. At the same time we have $x_n=h+V(w_n)\rightharpoonup h+V(w)$. Thus $x=h+V(w)$, i.e. $x\in S^p_F$. \par Let $(x_k)_\K$ be an arbitrary sequence of elemnts of the set $S_n^p$, i.e. $x_k=h+V(w_k)$, where $w_k\in N_{F_n}^p(x_k)$. Denote by $f\colon I\to\R{}$ a scalar function such that $f(t)=\beta(\{x_k(t)\}_\K)$. For every $t\in I$ we have the following inequalities:
\begin{align*}
f(t)&\<2\int\limits_0^t||k(t,s)||_{{\mathcal L}}\beta(\{w_k(s)\}_\K)ds\<2\int\limits_0^t\!||k(t,s)||_{{\mathcal L}}\,\beta(\overline{\co}F(s,B(\{x_k(s)\}_\K,3r_n)))ds\\&\<2\int\limits_0^t||k(t,s)||_{{\mathcal L}}\eta(s)(\beta(\{x_k(s)\}_\K)+3r_n)ds\<2\sup_{t\in I}||k(t,\cdot)||_q\left(\int\limits_0^t\eta(s)^p(f(s)+3r_n)^pds\right)^{\frac{1}{p}},
\end{align*}
so for $t\in I$
\[f(t)^p\<2^pB^p\int\limits_0^t\eta(s)^p(f(s)+3r_n)^pds,\]
where $B=\sup_{t\in I}||k(t,\cdot)||_q$. If the right side of the above inequality we treat as a function $g$ of the variable $t$, then:
\[g'(t)=2^pB^p\eta(t)^p(f(t)+3r_n)^p\]
for almost all $t\in I$. Using the convexity of the mapping $\R{}_+\ni x\to x^p\in\R{}_+$ we get
\[g'(t)\<2^pB^p\eta(t)^p(2^{p-1}f(t)^p+2^{p-1}3^pr_n^p)\<2^{2p-1}B^p\eta(t)^p(g(t)+3^pr_n^p)\] for almost all $t\in I$. Given the fact that the (Carath\'eodory) solution of the following initial value problem
\[\begin{cases}
\dot{x}(t)=2^{2p-1}B^p\eta(t)^p3^pr_n^p+2^{2p-1}B^p\eta(t)^px(t)&\mbox{for almost all }t\in I,\\
x(0)=g(0)=0
\end{cases}\]
majorizes function $g$, we obtain the estimation:
\[f(t)^p\<g(t)\<e^{\,\int\limits_0^t2^{2p-1}B^p\eta(s)^pds}\int\limits_0^t2^{2p-1}B^p3^pr_n^p\eta(s)^pe^{\,-\int\limits_0^s2^{2p-1}B^p\eta(\tau)^pd\tau}ds\]
for every $t\in I$. Consequently
\[f(t)\<3r_n\left(e^{2^{2p-1}B^p\int\limits_0^t\eta(s)^pds}-1\right)^{\frac{1}{p}}\]
for $t\in I$, i.e.
\[\sup\limits_{t\in I}\beta(\{x_k(t)\}_\K)\<3r_n\left(e^{2^{2p-1}B^p||\eta||_p^p}-1\right)^{\frac{1}{p}}.\]
Since the sequence $(x_k)_\K$ was choosen arbitrarily, the last inequality means that:
\begin{equation}\label{max}
\max\limits_{D\in\Delta(S_n^p)}\sup\limits_{t\in I}\beta(D(t))\<Mr_n,
\end{equation}
where $M=3(e^{2^{2p-1}B^p||\eta||_p^p}-1)^{p^{-1}}<+\infty$.
\par We claim that
\begin{equation}\label{beta}
\underset{\eps>0}{\forall}\;\underset{N\in\varmathbb{N}}{\exists}\;\underset{n>N}{\forall}\;S_n^p\subset B(S^p_{\!F},\eps).
\end{equation}
If this statement was not true, then there exists $\eps>0$ and a sequence $(x_{k_n})$ such that $x_{k_n}\in S_{k_n}^p$ and $x_{k_n}\notin B(S^p_{\!F},\eps)$ for every $\n$. Observe that $\{x_{k_n}\}_{n=N}^\infty\in\Delta(S_{k_N}^p)$ for every $N\geqslant 1$. Thus
\[\sup\limits_{t\in I}\beta(\{x_{k_n}(t)\}_{n=1}^\infty)=\sup\limits_{t\in I}\beta(\{x_{k_n}(t)\}_{n=N}^\infty)\<\max\limits_{D\in\Delta(S_{k_N}^p)}\sup\limits_{t\in I}\beta(D(t))\<Mr_{k_N}\]
for every $N\geqslant 1$, by \eqref{max}. Therefore, keeping in mind that $r_n\to 0$ as $n\to\infty$, we obtain the equality: $\sup_{t\in I}\beta(\{x_{k_n}(t)\}_\n)=0$. On the other hand the set $S_n^p$ is equicontinuous, so $\modulus_C(\{x_{k_n}\}_\n)=0$. Arzel\`a theorem implies the existence of a subsequence (again denoted by) $(x_{k_n})$ convergent in $C(I,E)$ to some $x$. Obviously, $x\in \overline{S^p_{k_n}}$ for every $\n$, because the family $\{S_n^p\}_\n$ is decreasing. De facto $x\in\bigcap_\n\overline{S^p_n}$. At the same time $x\notin B(S^p_{\!F},\eps)$. But we have shown that $S^p_{\!F}=\bigcap_\n\overline{S^p_n}$ - contradiction.\par Let $(f_n\colon I\times E\to E)_\n$ be a sequence of functions defined by \eqref{selection}, on the basis of strongly measurable selections of the map $F(\cdot,x)$. Preserving the sign of the proof of Theorem \ref{geometry}. (page \thePage) define the set \[X=\bigcup_{\n}H_n\left([0,1]\times S_F^p\right)\cup S_F^p.\] Note that sets $S_{\!F}^p$ (Theorem \ref{solset}.) and $H_n\left([0,1]\times S_F^p\right)$ (continuity of homotopy) are compact and that $H_n\left([0,1]\times S_F^p\right)\subset S^p_n$ for every $\n$. By \eqref{beta} it follows that $X$ is compact. \par For every $\eps>0$ we can provide such a function $g_{n_\eps}\colon[0,1]\times S_F^p\times X\to C(I,E)$ that
\begin{itemize}
\item[(i)] $g_{n_\eps}$ is continuous and compact,
\item[(ii)] for every $(s,y)\in[0,1]\times S_F^p$ the equation $x=g_{n_\eps}(s,y,x)$ possesses a nonempty acyclic solution set contained in the $\eps$-neighbourhood $B(S_F^p,\eps)$,
\item[(iii)] $y=g_{n_\eps}(0,y,y)$ for $s=0$ and for every $y\in S_F^p$,
\item[(iv)] for $s=1$ there exists $x_0\in X$ such that $x_0=g_{n_\eps}(1,y,x_0)$ for every $y\in S_F^p$.
\end{itemize}
Indeed, let
\begin{equation}\label{gneps}
g_{n_\eps}(s,y,x)(t)=
\begin{cases}
y(t)&\mbox{for }t\in[0,(1-s)T],\\
h(t)+\int\limits_0^{(1-s)T}k(t,\tau)w_y(\tau)\,d\tau+\int\limits_{(1-s)T}^tk(t,\tau)f_{n_\eps}(\tau,x(\tau))\,d\tau&\mbox{for }t\in[(1-s)T,T],
\end{cases}
\end{equation}
where $y=h+V(w_y)$ for $w_y\in N_F^p(y)$.\par The solution set of the equation $x=g_{n_\eps}(s,y,x)$ is acyclic for any pair $(s,y)$, because it is a singleton in view of Lemma \ref{equation}. Definition \eqref{gneps} guarantees that this solution belongs to the set $S_{F_{n_\eps}}^p$. Thus, if $n_\eps\in\varmathbb{N}$ is sufficiently large then above-mentioned solution belongs to $\eps$-areola of the set $S_{\!F}^p$ - according to \eqref{beta}. Property (iii) follows immediately from definition \eqref{gneps}. If $s=1$, then equation $x=g_{n_\eps}(s,y,x)$ takes the form:
\begin{equation}\label{gwiazdka}
x(t)=h(t)+\int_0^tk(t,\tau)f_{n_\eps}(\tau,x(\tau))\,d\tau,\;t\in I.
\end{equation}
Reapplying of Lemma \ref{equation}. ensures the existence of a unique solution $x_0\in X$ to the equation \eqref{gwiazdka}. It is clear that this solution does not depend on the choice of the points $y\in S_F^p$.\par Let us move on to the continuity of $g_{n_\eps}$. Let $(s_k,y_k,x_k)\to(s,y,x)$ in $[0,1]\times S_F^p\times X$ as $k\to\infty$. Fix our attention on the case $s_k\searrow s$. It is easy to see that the sequence of functions $(w_{y_k})_\K$, corresponding to the choice of points $y_k\in S_F^p$, is relatively weakly compact in the space $L^p(I,E)$. If one takes into account the convergence $V(w_{y_k})\rightharpoonup V(w_y)$ and the injectivity of operator $V$ (Lemma \ref{lem2}.), it becomes clear that $w_{y_k}\rightharpoonup w_y$ (with precision of a subsequence). \par Denote by $V_{(1-s)T}\colon L^p(I,E)\to C(I,E)$ a linear continuous mapping defined by the formula
\[V_{(1-s)T}(w)(t)=\int_0^{(1-s)T}\!\!\!\!\!\!\!\!k(t,\tau)w(\tau)\,d\tau.\] Clearly $V_{(1-s)T}(w_{y_k})\rightharpoonup V_{(1-s)T}(w_y)$. The uniform convergence $V_{(1-s)T}(w_{y_k})\rightrightarrows V_{(1-s)T}(w_y)$ results from the equicontinuity of the family $\{V_{(1-s)T}(w_{y_k})\}_\K$ and from the following estimation
\begin{align*}
\beta(\{V_{(1-s)T}(w_{y_k})(t)\}_\K)&\<2\int_0^{(1-s)T}||k(t,\tau)||_{{\mathcal L}}\beta(\{w_{y_k}(\tau)\}_\K)d\tau\<2\int_0^{(1-s)T}||k(t,\tau)||_{{\mathcal L}}\eta(\tau)\beta(\{y_k(\tau)\}_\K)d\tau.
\end{align*}
\par Recall that we can associate with the compact set $K=\bigcup_\K x_k(I)\cup x(I)$ such a mapping $\mu_K\in L^p(I,\R{})$ that $|f_{n_\eps}(t,x_1)-f_{n_\eps}(t,x_2)|\<\mu_K(t)|x_1-x_2|$ for $t\in I$ and for all $x_1,x_2\in K$. Using the above findings, we can estimate:
{\allowdisplaybreaks
\begin{align*}
\sup_{t\in I}|g_{n_\eps}(s_k,y_k,x_k)(t)&-g_{n_\eps}(s,y,x)(t)|\<\sup_{t\in I}\left|\int_0^{(1-s)T}\!\!\!\!k(t,\tau)w_{y_k}(\tau)d\tau-\int_0^{(1-s)T}\!\!\!\!k(t,\tau)w_y(\tau)d\tau\right|\\&+\sup_{t\in I}\left|\int_{(1-s_k)T}^{(1-s)T}\!\!\!\!k(t,\tau)w_{y_k}(\tau)d\tau\right|+\sup_{t\in I}\left|\int_{(1-s_k)T}^{(1-s)T}\!\!\!\!k(t,\tau)f_{n_\eps}(\tau,x_k(\tau))d\tau\right|\\&+\sup_{t\in I}\left|\int_{(1-s)T}^t\!\!\!\!k(t,\tau)f_{n_\eps}(\tau,x_k(\tau))d\tau-\int_{(1-s)T}^t\!\!\!\!k(t,\tau)f_{n_\eps}(\tau,x(\tau))d\tau\right|\\&\<||V_{(1-s)T}(w_{y_k})-V_{(1-s)T}(w_y)||+\sup_{t\in I}||k(t,\cdot)||_q\left(\int_{(1-s_k)T}^{(1-s)T}|w_{y_k}(\tau)|^pd\tau\right)^\frac{1}{p}\\&+\sup_{t\in I}||k(t,\cdot)||_q\left(\int_{(1-s_k)T}^{(1-s)T}\!\!\!\!\!\!\!\!\!\!\!|f_{n_\eps}(\tau,x_k(\tau))|^pd\tau\right)^\frac{1}{p}\!\!\!+\sup_{t\in I}\!\int_{(1-s)T}^t\!\!\!\!\!\!\!\!\!||k(t,\tau)||_{{\mathcal L}}\mu_K(\tau)|x_k(\tau)\!-\!x(\tau)|d\tau\\&\<||V_{(1-s)T}(w_{y_k})-V_{(1-s)T}(w_y)||+\sup_{t\in I}||k(t,\cdot)||_q\left(2\left(\int_{(1-s_k)T}^{(1-s)T}\!\!\!\!\!\mu(\tau)^pd\tau\right)^\frac{1}{p}+||\mu_K||_p||x_k-x||\right),
\end{align*}}
i.e. $||g_{n_\eps}(s_k,y_k,x_k)-g_{n_\eps}(s,y,x)||\to 0$ as $k\to\infty$. Therefore $g_{n_\eps}$ is continuous and consequently compact.\par Observe that the set-valued operator ${\mathcal F}\colon X\map C(I,E)$ given by the formula ${\mathcal F}=h+V\circ N_F^p$ has a compact range ${\mathcal F}(X)$, because it is an upper semicontinuous map with compact values. The singlevalued map $i\colon X\to C(I,E)$ such that $i(x)=x$ is on the other hand proper continuous. In view of Theorem 4.3. in \cite{gelman} we infer that the set $\{x\in X\colon i(x)\in {\mathcal F}(x)\}=S_{\!F}^p$ is acyclic.  
\end{proof}
To be able to use the argument contained in the proof of Theorem \ref{acyclic}. in the context of integral equations it is sufficient to know that the Niemytskij operator $N_{\!f}^p$, given by $N_{\!f}^p(x)=f(\cdot,x(\cdot))$, is at least weakly continuous. Therefore, we can formulate the following proposal.
\begin{corollary}
Suppose that kernel $k$ fulfills conditions $(k_5)$-$(k_6)$. Assume that the mapping $f$ satisfies either $(f_1)$-$(f_3)$ and $(f_4)$ or $(f_1)$-$(f_3)$ and $(f_5)$, provided the dual space $E^*$ has the Radon-Nikod\'ym property. Then the set $S_{\!f}^p$ of continuous solutions to integral equation \eqref{rownanie} is acyclic.
\end{corollary}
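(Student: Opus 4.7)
The plan is to follow the proof of Theorem \ref{acyclic} essentially verbatim, with the observation that the single-valued setting sidesteps the difficulty that there forced the restrictive kernel hypotheses $(k_1)$-$(k_4)$. Their only role was to invoke Lemma \ref{lem2} and thereby attach a \emph{unique} $L^p$-selection $w_y$ to any fixed point $y = h + V(w_y)$ of the operator $\mathcal{F} = h + V \circ N_{\!f}^p$. For single-valued $f$ this is automatic: $w_y$ must equal $f(\cdot, y(\cdot)) = N_{\!f}^p(y)$, and the injectivity of $V$ plays no role.

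First I would reduce to the integrally bounded case by the radial-retraction trick preceding Theorem \ref{solset}: the a priori bound (\ref{equ6}) yields $M > 0$ with $\|S^p_{\!f}\| \le M$, and replacing $f$ by $\hat f(t,x) = f(t, r(x))$ (with $r\colon E \to D(0,M)$ the radial retraction) preserves $(f_1)$-$(f_3)$ and whichever of $(f_4), (f_5)$ is assumed, preserves the solution set, and secures the integral bound $|\hat f(t,\cdot)| \le (1 + M) c(t) =: \mu(t) \in L^p(I,\R{})$. Next, following (\ref{selection}), I would construct locally Lipschitz approximations
\[
f_n(t,x) = \sum_{y \in E} \lambda_y^n(x)\, \hat f(t,y), \qquad r_n = 3^{-n},
\]
using a locally Lipschitz partition of unity $\{\lambda_y^n\}_{y\in E}$ subordinate to the cover $\{B(y, r_n)\}_{y\in E}$. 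Each $f_n$ is strongly measurable in $t$, integrably bounded by $\mu$, and locally Lipschitz in $x$, so Lemma \ref{equation} applies and both the equation $x = h + V\circ N_{\!f_n}^p(x)$ and its shifted variants on any $[\tau, T]$ have unique solutions.

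I would then define the homotopy $g_{n_\eps}\colon [0,1] \times S^p_{\!f} \times X \to C(I,E)$ exactly as in (\ref{gneps}), now taking $w_y := f(\cdot, y(\cdot))$. Properties (ii)--(iv) of $g_{n_\eps}$ follow via Lemma \ref{equation} as in Theorem \ref{acyclic}. The continuity of $g_{n_\eps}$---whose verification in Theorem \ref{acyclic} relied on the injectivity of $V$ to extract $w_{y_k} \rightharpoonup w_{y_0}$---is here immediate from the (weak) continuity of $N_{\!f}^p$ supplied by the Proposition preceding Corollary \ref{calFsing} (the Radon--Nikod\'ym property enters only in the $(f_4)$ case). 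Combined with the analogue of (\ref{beta})---that every $\eps$-neighbourhood of $S^p_{\!f}$ contains $S_n^p$ for large $n$---Theorem 4.3 of \cite{gelman} then applies on a suitable compact invariant set $X$, as in the closing paragraph of Theorem \ref{acyclic}, yielding acyclicity of $S^p_{\!f}$.

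The main obstacle I anticipate is the analogue of (\ref{beta}) under the first hypothesis cluster $(f_1)$-$(f_3), (f_4)$, where $(f_5)$ is unavailable to run the Gronwall argument of (\ref{max}). My plan there is to exploit $w_n(t) = f_n(t, x_n(t)) \in \overline{\co}\, \hat f(t, B(x_n(t), r_n))$ together with the integral bound $\mu$: Theorem \ref{2} (for $p = 1$), respectively Eberlein--\u Smulian and reflexivity of $E$ (for $p > 1$), delivers a weakly $L^p$-convergent subsequence of $(w_n)_\n$, while Theorem \ref{convergence} applied to $f$ (upper hemicontinuous by virtue of weak continuity and convex values) identifies the limit. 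Equicontinuity of $\{x_n\}_\n$ from the bound $\mu$ then produces a uniformly convergent subsequence whose limit belongs to $S^p_{\!f}$, closing the argument.
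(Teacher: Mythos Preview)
Your approach is precisely the one the paper intends: its entire justification is the sentence preceding the Corollary, namely that the proof of Theorem~\ref{acyclic} carries over once $N_{\!f}^p$ is known to be (at least weakly) continuous. You have correctly identified that this is exactly what replaces the appeal to Lemma~\ref{lem2}, since in the single-valued setting $w_y$ is forced to equal $f(\cdot,y(\cdot))=N_{\!f}^p(y)$ and no injectivity of $V$ is needed.

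Where you go beyond the paper is in flagging the obstacle under the first hypothesis cluster $(f_1)$--$(f_4)$ without $(f_5)$, and here your proposed workaround has a genuine gap. The difficulty is twofold. First, for $p=1$ you invoke Theorem~\ref{2} to extract a weakly convergent subsequence of $(w_n)$, but \"Ulger's criterion requires $\{w_n(t)\}_\n$ to be relatively weakly compact for almost every $t$; the containment $w_n(t)\in\overline{\co}\,\hat f(t,B(x_n(t),3r_n))$ only yields the bound $|w_n(t)|\le\mu(t)$, which is insufficient in a non-reflexive space. Second, and more seriously, even granting $w_{k_n}\rightharpoonup w$ in $L^p(I,E)$, your claim that ``equicontinuity of $\{x_n\}_\n$ then produces a uniformly convergent subsequence'' is unfounded: Arzel\`a--Ascoli demands pointwise relative \emph{norm}-compactness of $\{x_n(t)\}$, whereas weak convergence $V(w_{k_n})\rightharpoonup V(w)$ in $C(I,E)$ together with equicontinuity delivers only pointwise \emph{weak} convergence $x_{k_n}(t)\rightharpoonup x(t)$. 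Without $(f_5)$ (or $(k_7)$) there is no mechanism in the paper's framework to upgrade this to the norm compactness underlying (\ref{max}) and (\ref{beta}); the paper's one-line remark does not address this either, and the Corollary as literally stated (with $(f_4)$ alone playing the role of the compactness hypothesis) appears to inherit the same lacuna.
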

\par We conclude the current section with the following formulation of continuous dependence of solutions to integral inclusion on nonlinear perturbations.
\begin{proposition}\label{solutionsetmap}
Assume that conditions $(F_1)$-$(F_5)$ and $(k_5)$-$(k_6)$ are satisfied. Then the solution set map $S_F^p\colon C(I,E)\map C(I,E)$ given by the formula 
\begin{equation}\label{solsetmap}
S_F^p(h)=\left\{x\in C(I,E)\colon x\in h+V\circ N_F^p(x)\right\}
\end{equation}
is upper semicontinuous.
\end{proposition}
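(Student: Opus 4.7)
The plan is to argue by contradiction. If $S_F^p$ fails to be upper semicontinuous at some $h_0\in C(I,E)$, there exist an open $U\supset S_F^p(h_0)$, a sequence $h_n\to h_0$ in $C(I,E)$, and points $x_n\in S_F^p(h_n)\setminus U$. It will then be enough to extract a subsequence of $\{x_n\}_\n$ converging uniformly to some element of $S_F^p(h_0)\subset U$, which contradicts $x_n\notin U$; note that $S_F^p(h_0)$ is compact by Theorem \ref{solset}.

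First I write $x_n=h_n+V(w_n)$ with $w_n\in N_F^p(x_n)$. Since $\{h_n\}_\n$ is uniformly bounded as a convergent sequence, $(F_4)$ combined with a Gronwall-type argument (as in \eqref{equ6}) supplies a uniform constant $M$ with $\|x_n\|\leq M$ for every $\n$. Equicontinuity of $\{x_n\}_\n$ is then immediate from the equicontinuity of $\{h_n\}_\n$, assumption $(k_6)$, and the estimate \eqref{equicon}. For pointwise relative compactness I set $f(t)=\beta(\{x_n(t)\}_\n)$ and observe that, because $\{h_n(t)\}_\n$ is relatively compact, Theorem \ref{3}(i) and $(F_5)$ yield
\[
f(t)\leq 2\int_0^t\|k(t,s)\|_{\mathcal L}\eta(s)f(s)\,ds,
\]
so the Gronwall-type argument already deployed in the proof of Theorem \ref{solset} forces $f\equiv 0$. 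Arzel\`a-Ascoli then furnishes a subsequence $x_{k_n}\rightrightarrows x$ in $C(I,E)$.

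To identify $x$ as a point of $S_F^p(h_0)$, I invoke Proposition \ref{Niemytskij}: the sequential characterization it embodies (namely \cite[Prop.2(b)]{bothe}), applied to $(x_{k_n},w_{k_n})\in\Graph(N_F^p)$ with $x_{k_n}\to x$, yields a further subsequence $w_{m_{k_n}}\rightharpoonup w\in N_F^p(x)$ in $L^p(I,E)$. Weak continuity of the bounded linear operator $V$ then gives $V(w_{m_{k_n}})\rightharpoonup V(w)$, and combined with $h_{m_{k_n}}\to h_0$ this forces $x_{m_{k_n}}\rightharpoonup h_0+V(w)$; matching this weak limit with the strong limit $x$ gives $x=h_0+V(w)$ with $w\in N_F^p(x)$, i.e.\ $x\in S_F^p(h_0)$.

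The hard part will be the compactness extraction: one needs $\{h_n\}_\n$ to decouple from $\{V(w_n)\}_\n$ at the level of the MNC estimate, which works because the convergent sequence $\{h_n(t)\}_\n$ contributes nothing to $f(t)$, allowing the Gronwall loop to close on $f$ alone. In the case $p=1$ the weak compactness of $\{w_n\}_\n$ required by Proposition \ref{Niemytskij} has to be justified through Theorem \ref{2}, relying on the weakly compact values of $F$ built into $(F_1)$; for $p\in(1,\infty)$ the reflexivity of $L^p(I,E)$, inherited from reflexivity of $E$ that should be read into the hypotheses in parallel with Theorem \ref{solset}, makes this step immediate.
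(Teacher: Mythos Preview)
Your argument is correct and follows essentially the same route as the paper's own proof: show equicontinuity of $\{x_n\}_\n$ via $(k_6)$ and the growth bound, kill $\beta(\{x_n(t)\}_\n)$ through the Heinz--Gronwall loop using $(F_5)$ (with $\{h_n(t)\}_\n$ contributing zero), extract a uniformly convergent subsequence by Arzel\`a--Ascoli, and then pass to the limit via Proposition~\ref{Niemytskij} and the weak continuity of $V$. The only cosmetic differences are that the paper works directly with $(F_4')$ (so no preliminary Gronwall bound on $\|x_n\|$ is needed) and phrases the sequential criterion without the explicit contradiction wrapper.
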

\begin{proof}
Suppose the sequence $(h_n)_\n$ converges uniformly to $h$ and $x_n\in S_F^p(h_n)$ for $\n$. We claim that sequence $(x_n)_\n$ is relatively compact. Indeed, for every $t,\tau\in I$ and $\n$ the following inequalities hold
\[|x_n(t)-x_n(\tau)|\<\sup_\n|h_n(t)-h_n(\tau)|+\sup_{t\in I}||k(t,\cdot)||_q\left(\int_t^\tau\mu(s)^p\,ds\right)^\frac{1}{p}+||k(\tau,\cdot)-k(t,\cdot)||_q||\mu||_p.\]
From these results the equicontinuity of the family $\{x_n\}_\n$. Assume that $x_n=h_n+V(w_n)$ and $w_n\in N_F^p(x_n)$. Using equicontinuity of $\{h_n\}_\n$, condition $(F_5)$ and Heinz inequality (Theorem \ref{3}.) we arrive at
\begin{align*}
\beta(\{x_n(t)\}_\n)&=\beta\left(\left\{h_n(t)+\int_0^t\!\!k(t,s)w_n(s)\,ds\right\}_\n\right)\<\beta(\{h_n(t)\}_\n)+2\int_0^t||k(t,s)||_{{\mathcal L}}\beta(\{w_n(s)\}_\n)\,ds\\&\<2\int_0^t||k(t,s)||_{{\mathcal L}}\beta(F(s,\{x_n(s)\}_\n))\,ds\<2\int_0^t||k(t,s)||_{{\mathcal L}}\eta(s)\beta(\{x_n(s)\}_\n)\,ds
\end{align*}
for $t\in I$. Thus, for any $t\in I$ 
\[\beta(\{x_n(t)\}_\n)^p\<2^p\left(\sup_{t\in I}||k(t,\cdot)||_q\right)^p\int_0^t\eta(s)^p\beta(\{x_n(s)\}_\n)^p\,ds\] and consequently $\beta(\{x_n(t)\}_\n)=0$ for every $t\in I$, by Gronwall inequality. Ipso facto, Arzel\`a theorem implies existence of a subsequence $(x_{k_n})_\n$, which converges to some $x$ in $C(I,E)$. We are able to distinguish a weakly convergent in the space $L^p(I,E)$ subsequence $(w_{k_n})_\n$ such that $w_{k_n}\rightharpoonup w\in N_F^p(x)$. Based on the weak continuity of Volterra oparator $V$ we infer that $x_{k_n}=h_{k_n}+V(w_{k_n})\rightharpoonup h+V(w)$ as $n\to\infty$. The uniqueness of a weak limit means that $x=h+V(w)$. So finally $x\in S_F^p(h)$. 
\end{proof}
\begin{corollary}\label{wn3}
The set-valued map $S_F^p\colon C(I,E)\map C(I,E)$ remains upper semicontinuous, if we replace assumption $(F_5)$ by the assumption $(k_7)$.
\end{corollary}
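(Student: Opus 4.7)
The plan is to recycle the structure of Proposition \ref{solutionsetmap}., altering only the step where the pointwise relative compactness of $\{x_n(t)\}_\n$ was derived from $(F_5)$. Given $h_n\rightrightarrows h$ and $x_n\in S_F^p(h_n)$, write $x_n=h_n+V(w_n)$ with $w_n\in N_F^p(x_n)$. The equicontinuity of $\{x_n\}_\n$ is established exactly as before from $(F_4')$ together with $(k_5)$--$(k_6)$; the a.e.~bound $|w_n(s)|\leqslant\mu(s)$ is still in force.

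For the pointwise estimate the $(F_5)$--Heinz computation should be replaced as follows. Because $\{w_n(s)\}_\n$ is bounded in $E$ for almost all $s\in I$ and $(k_7)$ asserts complete continuity of every operator $k(t,s)$, the set $k(t,s)\{w_n(s)\}_\n$ is relatively compact, i.e. $\beta(k(t,s)\{w_n(s)\}_\n)=0$ a.e. Theorem \ref{3}.(i) then yields
\[
\beta(\{V(w_n)(t)\}_\n)\leqslant 2\int_0^t\beta(k(t,s)\{w_n(s)\}_\n)\,ds=0
\]
for every $t\in I$. Since the uniformly convergent sequence $(h_n)_\n$ is pointwise relatively compact, this forces $\beta(\{x_n(t)\}_\n)=0$ throughout $I$, and Arzel\`a's theorem produces a subsequence $x_{k_n}\rightrightarrows x$ in $C(I,E)$.

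The verification that $x\in S_F^p(h)$ now duplicates the final paragraph of Proposition \ref{solutionsetmap}.: when $p=1$, Theorem \ref{2}. applies because $\{w_n(s)\}_\n\subset F(s,\overline{\{x_n(s)\}}_\n)$ sits in a weakly compact set thanks to the weak upper semicontinuity of $F(s,\cdot)$ guaranteed by $(F_1)$--$(F_3)$, whereas for $p\in(1,\infty)$ the reflexivity of $L^p(I,E)$ invoked in Proposition \ref{Niemytskij}. together with the Eberlein-\v Smulian theorem supplies a further subsequence with $w_{k_n}\rightharpoonup w$ in $L^p(I,E)$. The convergence theorem (Theorem \ref{convergence}.) places $w\in N_F^p(x)$, and the weak continuity of $V$ gives $x_{k_n}=h_{k_n}+V(w_{k_n})\rightharpoonup h+V(w)$; by uniqueness of the weak limit $x=h+V(w)\in S_F^p(h)$.

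No genuinely new difficulty arises: the substitution mirrors the swap between $(F_5)$ and $(k_7)$ already carried out at the end of Lemma \ref{calFlem}., and the single technical prerequisite, namely the pointwise boundedness of $\{w_n(s)\}_\n$ in $E$, follows at once from $(F_4')$, which is in effect throughout Section 3.
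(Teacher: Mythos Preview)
Your proposal is correct and follows exactly the approach implicit in the paper: the corollary is stated without proof precisely because the only modification needed in the argument of Proposition \ref{solutionsetmap}. is the replacement of the $(F_5)$--Gronwall estimate by the observation $\beta(k(t,s)\{w_n(s)\}_\n)=0$ coming from $(k_7)$, which is the same swap the paper performs at the end of Lemma \ref{calFlem}. and in Corollary \ref{geometry2}. Your write-up even supplies a bit more detail on the weak-convergence step than the paper does.
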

\section{Applications}
\par Subsequent statements address the question of the existence of periodic solutions to Volterra integral inclusions. In their proofs are applied earlier results on the topological and geometric structure of the solution set. 
\begin{theorem}\label{pervol1}
Let $p\in[1,\infty)$, while the space $E$ is reflexive for $p\in(1,\infty)$. Assume that conditions $(k_1)$-$(k_4)$ and $(F_1)$-$(F_5)$ hold with
\begin{equation}\label{condwithp}
\sup\limits_{t\in I}||k(t,\cdot)||_q||\eta||_p<2^{\frac{2}{p}-3}e^{-\frac{1}{p}}.
\end{equation}
Then there exists a continuous function $h\colon I\to E$, for which the integral inclusion \eqref{inclusion} possesses a $T$-periodic solution.
\end{theorem}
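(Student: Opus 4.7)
The plan is to recast the existence of a $T$-periodic solution as a fixed-point problem for a Poincar\'e-type set-valued operator on $E$. For each $v\in E$, take the constant (hence trivially $T$-periodic) forcing $h_v\equiv v$, and define the Poincar\'e map $P\colon E\map E$ by
\[P(v)=\{x(T):x\in S_F^p(h_v)\}=e_T(S_F^p(h_v)),\]
where $e_T\colon C(I,E)\to E$, $e_T(x)=x(T)$. A fixed point $v^\ast\in P(v^\ast)$ furnishes some $x^\ast\in S_F^p(h_{v^\ast})$ with $x^\ast(0)=v^\ast=x^\ast(T)$, i.e.\ the sought $T$-periodic solution of \eqref{inclusion} with (constant, hence $T$-periodic) continuous forcing $h\equiv v^\ast$.

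I would first establish admissibility of $P$ by decomposing it as the composition of the continuous affine map $v\mapsto h_v$ from $E$ into $C(I,E)$, the solution-set multimap $S_F^p$ (which is upper semicontinuous by Proposition~\ref{solutionsetmap} with acyclic compact values by Theorem~\ref{solset} and Theorem~\ref{acyclic}), and the continuous evaluation $e_T$. Thus $P$ is upper semicontinuous, admissible, and compact-valued. The a-priori bound \eqref{equ6} together with the freedom to pass from $(F_4)$ to $(F_4')$ via the radial retraction (as explained after \eqref{equ6}) confines fixed points to a specific closed ball and yields the uniform displacement bound $|\tilde{P}(v)|\leq\sup_{t\in I}\|k(t,\cdot)\|_q\|\mu\|_p$, where $\tilde{P}:=P-I$.

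The heart of the proof is the condensing estimate. From $x(T)-v=\int_0^Tk(T,s)w(s)\,ds$ for $w\in N_F^p(x)$, invoking $(F_5)$, Heinz's inequality (Theorem~\ref{3}) and H\"older's inequality gives, for sequences $v_n\in\Omega$, $x_n\in S_F^p(h_{v_n})$, the functional inequality
\[a(s)\leq\beta(\{v_n\})+2\int_0^s\|k(s,\tau)\|_{{\mathcal L}}\,\eta(\tau)\,a(\tau)\,d\tau,\qquad a(s):=\beta(\{x_n(s)\}).\]
The integrating-factor version of Gronwall applied to the auxiliary $\Phi(s)=\int_0^s\eta^p a^p$ yields an explicit bound
\[\beta(\tilde{P}(\Omega))\leq\tfrac{1}{2}\bigl(\exp\bigl(2^{3p-1}(\sup_{t\in I}\|k(t,\cdot)\|_q\|\eta\|_p)^p\bigr)-1\bigr)^{1/p}\beta(\Omega),\]
and the numerical assumption \eqref{condwithp} is precisely designed so that the resulting multiplicative constant is strictly below $1$, making $\tilde{P}$ a $k$-set contraction with $k<1$.

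The main obstacle will be translating the $k$-set-contractive behaviour of the displacement $\tilde{P}$ into condensing behaviour of $P=I+\tilde{P}$ within the scope of Theorem~\ref{4}. I plan to handle this by working on a sufficiently large closed ball: the uniform bound on $\tilde{P}$ makes it possible to verify a Leray--Schauder-type boundary condition ``$v\notin\lambda P(v)$ for $\lambda>1$ and $v$ on the sphere'', which, combined with the admissible and set-contractive nature of $P$, permits the application of an admissible degree-type extension of Darbo--Sadovskij for maps of the form ``identity plus $k$-set contraction''. Once the fixed point $v^\ast\in P(v^\ast)$ is produced, the continuous function $h\equiv v^\ast$ together with the corresponding solution $x^\ast$ delivers the sought $T$-periodic solution.
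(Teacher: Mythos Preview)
Your reduction to a Poincar\'e map is the right idea, but the choice of constant forcing $h_v\equiv v$ creates a gap you do not actually close. With $h_v\equiv v$ the operator $P$ satisfies $P(v)\subset v+D(0,M)$, so it fails to map any ball into itself, and---more importantly---$P$ is \emph{not} condensing: from $P(\Omega)\subset\Omega+\tilde P(\Omega)$ you only get $\beta(P(\Omega))\le(1+k)\beta(\Omega)$, which is useless for Theorem~\ref{4}. Your fallback (``Leray--Schauder boundary condition $v\notin\lambda P(v)$ for $\lambda>1$'') does not yield an a~priori bound either: if $v\in\lambda P(v)$ then $|v|(1-\lambda^{-1})\le M$, which blows up as $\lambda\to1^+$. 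The vague appeal to ``an admissible degree-type extension of Darbo--Sadovskij for maps of the form identity plus $k$-set contraction'' is not supported by anything in the paper and, as written, does not exist in the form you need.

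The paper resolves exactly this difficulty by choosing a \emph{non-constant} forcing $h(t)=U(t)v$ with $U(0)=\mathrm{id}$, $\{U(t)\}$ uniformly continuous, and $\|U(t)\|_{\mathcal L}\le e^{-\omega t}$. Then $x(T)=U(T)v+\int_0^Tk(T,s)w(s)\,ds$ with $\|U(T)\|_{\mathcal L}\le e^{-\omega T}<1$, so the Poincar\'e operator $P_T$ genuinely maps the ball $D(0,(1-e^{-\omega T})^{-1}M)$ into itself \emph{and} is $\tilde\beta$-condensing, because the identity contribution is damped by $e^{-\omega T}$. The numerical condition~\eqref{condwithp} is not, as you assert, ``precisely designed'' for your Gronwall constant; it arises from optimizing the competing requirements on $\omega$ (large enough for the damping, small enough to keep $\|e^{\omega(T-\cdot)}\eta\|_p$ under control), with the optimum at $\omega_0=\frac{1+(p-1)\ln2}{pT}$. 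Without this exponential-damping device, the fixed-point step does not go through.
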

\begin{proof}
Suppose there is a function $\varrho\in L^p(I,\R{})$ and a number $\omega>0$ such that for all bounded $\Omega\subset E$ and for almost all $t\in I$ the following inequality holds
\begin{equation}\label{varrho}
\beta(F(t,\Omega))\<e^{\omega(t-T)}\varrho(t)\beta(\Omega).
\end{equation}
We claim that, if $\omega T>(1-p^{-1})\ln2$ and
\begin{equation}\label{<1}
\sup\limits_{t\in I}||k(t,\cdot)||_q||\varrho||_p<2^{\frac{1}{p}-2}\left((1-p)\ln(2)+p\omega T\right)^{\frac{1}{p}},
\end{equation}
then the thesis of the current theorem is true. \par First observe that condition \eqref{<1} is reasonable, because $2^{\frac{1}{p}-2}\left((1-p)\ln(2)+p\omega T\right)^{\frac{1}{p}}>0$ provided $\omega T>(1-p^{-1})\ln2$. Let $\{U(t)\}_{t\in I}\subset{\mathcal L}(E)$ be a family of operators possessing the following properties: 
\begin{itemize}
\item[$(U_1)$] $U(0)=id_E$,
\item[$(U_2)$] $\{U(t)\}_{t\in I}$ is uniformly continuous, i.e. $||U(t)-U(\tau)||_{{\mathcal L}}\to 0$ as $t\to\tau$,
\item[$(U_3)$] $\{U(t)\}_{t\in I}$ is uniformly exponentially stable in the sense that $||U(t)||_{{\mathcal L}}\<e^{-\omega t}$ for every $t\in I$.
\end{itemize}
Next define a Poncar\'e-type operator $P_t\colon E\map E$ by the formula
\[P_T(x)=ev_T(S_F^p(U(\cdot)x)),\]
where $ev_T\colon C(I,E)\to E$ denotes the evaluation map at the point $T$, whereas $S_F^p\colon C(I,E)\map C(I,E)$ is the solution set map given by \eqref{solsetmap}. Note that if $k(T,\cdot)\equiv 0$, then every solution of \eqref{inclusion} is $T$-periodic provided the inhomogeneity $h$ is also $T$-periodic. So let us assume that $M=||k(T,\cdot)||_q||\mu||_p>0$ and take the radius $R=\left(1-e^{-\omega T}\right)^{-1}\cdot M$. Now, it is clear that $P_T(D(0,R))\subset D(0,R)$. In view of Theorem \ref{solset}., Theorem \ref{acyclic}. and Proposition \ref{solutionsetmap}. the set-valued map $S_{\!F}^p$ is acyclic. Therefore operator $P_T\colon D(0,R)\map D(0,R)$ is admissible (\cite[Th. 40.6]{gorn}).\par Let $\w{\beta}\colon{\mathcal B}\to\R{}$ be the MNC defined by \eqref{sequentialbeta}. Assume that 
\begin{equation}\label{kond}
\w{\beta}(\Omega)\<\w{\beta}(P_T(\Omega))
\end{equation}
for some $\Omega\in{\mathcal B}$. Then there are countable subsets $\{y_n\}_\n\subset C(I,E)$ and $\{x_n\}_\n\subset\Omega$ such that $y_n\in S_F^p(U(\cdot)x_n)$ and $\w{\beta}(P_T(\Omega))=\beta(\{y_n(T)\}_\n)$. We have the following estimation
\begin{align*}
|y_n(t)-y_n(\tau)|\<||U(t)-U(\tau)||_{{\mathcal L}}\sup_\n|x_n|+\sup_{t\in I}||k(t,\cdot)||_q\left(\int_t^\tau\mu(s)^p\,ds\right)^\frac{1}{p}\!+\!||k(\tau,\cdot)-k(t,\cdot)||_q||\mu||_p
\end{align*} 
It is obvious that the family $\{y_n\}_\n$ is equicontinuous, inter alia, due to the property $(U_2)$. Let us introduce an auxiliary function $f\colon I\to\R{}$ such that $f(t)=\beta(\{y_n(t)\}_\n)$. Reffering to the inequality \eqref{varrho} and properites $(U_1)$, $(U_3)$ we are able to estimate:
\begin{align*}
f(t)&\<\beta(U(t)\{x_n\}_\n)+\beta\left(\left\{\int_0^tk(t,s)w_n(s)ds\right\}_\n\right)\<||U(t)||_{{\mathcal L}}\beta(\{x_n\}_\n)+2\int_0^t\beta(k(t,s)\{w_n(s)\}_\n)ds\\&\<e^{-\omega t}\beta(\{y_n(0)\}_\n)+2\int_0^t||k(t,s)||_{{\mathcal L}}e^{\omega(s-t)}\varrho(s)\beta(\{y_n(s)\}_\n)ds\\&\<e^{-\omega t}f(0)+2\sup\limits_{\tau\in I}||k(\tau,\cdot)||_q\left(\int_0^te^{p\omega(s-t)}\varrho(s)^pf(s)^pds\right)^{\frac{1}{p}},
\end{align*}
where $w_n\in N_F^p(y_n)$. Seeing that the mapping $\R{}_+\ni x\mapsto x^p\in\R{}_+$ is monotone and convex, we conclude that
\begin{align*}
f(t)^p\<\left(e^{-\omega t}f(0)+2B\left(\int_0^t\!\!e^{p\omega(s-t)}\varrho(s)^pf(s)^pds\right)^{\frac{1}{p}}\right)^p\!\<\frac{1}{2}2^pf(0)^pe^{-p\omega t}+\frac{1}{2}2^{2p}B^pe^{-p\omega t}\!\!\int_0^t\!\!e^{p\omega s}\varrho(s)^pf(s)^pds
\end{align*}
for every $t\in I$, where $B=\sup_{\tau\in I}||k(\tau,\cdot)||_q$. Denote the right-hand side of the above inequality as the function $g$ of the variable $t$. Then
\[g'(t)=-p\omega g(t)+2^{2p-1}B^p\varrho(t)^pf(t)^p\<-p\omega g(t)+2^{2p-1}B^p\varrho(t)^pg(t)\]
for almost all $t\in I$. The solution of the following initial problem
\[\begin{cases}
\dot{x}(t)=-p\omega x(t)+2^{2p-1}B^p\varrho(t)^px(t)&\mbox{for almost all }t\in I\\
x(0)=g(0)
\end{cases}\]
majorizes function $g$. Therefore we have this estimation:
\[f(T)^p\<g(T)\<g(0)e^{-p\omega T+2^{2p-1}B^p||\varrho||_p^p}\]
and as a result
\[f(T)\<2^{\frac{p-1}{p}}e^{-\omega T+p^{-1}2^{2p-1}B^p||\varrho||_p^p}f(0).\]
There are two possibilities: either $f(0)=0$ or $f(0)>0$. If $f(0)>0$, then
\[\w{\beta}(P_T(\Omega))=f(T)<f(0)=\beta(\{x_n\}_\n)\<\w{\beta}(\Omega),\]
because $2^{\frac{p-1}{p}}e^{-\omega T+p^{-1}2^{2p-1}B^p||\varrho||_p^p}<1$ in view of the assumption \eqref{<1}. We get a contradiction with the condition \eqref{kond}. Therefore the equality $f(0)=0$ must hold. In this case we have $f(T)=\w{\beta}(P_T(\Omega))=0$, which means that $\w{\beta}(\Omega)=0$. Since the MNC $\w{\beta}$ is regular, the set $\Omega$ must be relatively compact in $E$. This proves that the operator $P_T\colon D(0,R)\map D(0,R)$ is $\w{\beta}$-condensing. Hence, in view of Theorem \ref{4}., there is a point $x_0\in D(0,R)$ such that $x_0\in P_T(x_0)$. This point generates a $T$-periodic solution to inclusion \eqref{inclusion} with the perturbation component $h\colon I\to E$ given by $h(t)=U(t)x_0$.\par It now remains to prove the thesis in the context of the presupposed inequality \eqref{condwithp}. To this end define an auxiliary map $\xi\colon\R{}\to\R{}$ such that $\xi(\omega)=2^{\frac{1}{p}-2}\left((1-p)\ln(2)+p\omega T\right)^{\frac{1}{p}}e^{-\omega T}$ and the function $\varrho\in L^p(I,\R{})$ by the formula $\varrho(t)=e^{\omega_0(T-t)}\eta(t)$, where $\omega_0$ is a fixed number given by $\omega_0=\frac{1+(p-1)\ln 2}{pT}$. Thanks to the assumption $(F_5)$ inequality \eqref{varrho} is satisfied. At the same time
\[\sup_{t\in I}||k(t,\cdot)||_q||\varrho||_p\<\sup_{t\in I}||k(t,\cdot)||_q||\eta||_pe^{\omega_0T}<\xi(\omega_0)e^{\omega_0T}=2^{\frac{1}{p}-2}((1-p)\ln(2)+p\omega_0T)^{\frac{1}{p}},\]
which means that inequality \eqref{<1} is also satisfied. By virtue of the foregoing justified claim the existence of periodic solutions is established. Observe that estimation \eqref{condwithp} is the best possible in the sense that $2^{\frac{2}{p}-3}e^{-\frac{1}{p}}=\xi(\omega_0)=\max\left\{\xi(\omega)\colon\omega\in\left(\frac{(p-1)\ln 2}{pT},\infty\right)\right\}$.
\end{proof}
\begin{remark}
Observe that the assumption \eqref{condwithp} is stronger than assumption $(E_1)$, which supports Theorem \ref{geometry}. Hence the Poincar\'e operator used in the above proof belongs in fact to the class of so-called decomposable mappings (see \cite{gorn}).
\end{remark}
\begin{theorem}\label{pervol2}
Let $p\in[1,\infty)$, while the space $E$ is reflexive for $p\in(1,\infty)$. Assume that conditions $(F_1)$-$(F_4')$ and $(k_1)$-$(k_4)$ together with $(k_7)$ are satisfied. Then there exists a continuous function $h\colon I\to E$, for which the integral inclusion \eqref{inclusion} possesses a $T$-periodic solution.
\end{theorem}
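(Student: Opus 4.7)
The plan is to recycle the Poincar\'e-operator framework of Theorem \ref{pervol1}, but to exploit the complete continuity of the kernel in order to bypass the Gronwall-type estimate that previously forced the quantitative smallness condition \eqref{condwithp}. Concretely, pick any family $\{U(t)\}_{t\in I}\subset\mathcal L(E)$ satisfying $(U_1)$--$(U_3)$ for some fixed $\omega>0$ (for instance $U(t)=e^{-\omega t}\mathrm{id}_E$), and set
\[P_T(x)=ev_T\bigl(S_F^p(U(\cdot)x)\bigr),\qquad x\in E.\]
Thanks to $(F_4')$ the bound $M=\|k(T,\cdot)\|_q\|\mu\|_p$ is finite, and for $y\in S_F^p(U(\cdot)x)$ one has $|y(T)|\<e^{-\omega T}|x|+M$. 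Hence $P_T$ maps the closed ball $D(0,R)$ into itself whenever $R\geqslant M/(1-e^{-\omega T})$; in particular the trivial case $k(T,\cdot)\equiv 0$ is handled as in Theorem \ref{pervol1}.

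Next I would verify that $P_T\colon D(0,R)\map D(0,R)$ is admissible. The map $E\ni x\mapsto U(\cdot)x\in C(I,E)$ is continuous by $(U_2)$, the evaluation $ev_T$ is continuous, and under the hypotheses $(F_1)$--$(F_4')$ and $(k_1)$--$(k_4)$,$(k_7)$ Corollary \ref{geometry2} asserts that every fibre $S_F^p(U(\cdot)x)$ is an $R_\delta$-set, hence acyclic, while Corollary \ref{wn3} gives upper semicontinuity of the solution set map. Composing acyclic and continuous maps one obtains an admissible (in fact acyclic-valued) multivalued operator in the sense of \cite[Def.40.1]{gorn}.

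The decisive simplification lies in the condensing estimate. Take a countable $\{x_n\}_\n\subset\Omega\subset D(0,R)$ and corresponding selections $y_n\in S_F^p(U(\cdot)x_n)$ with $y_n=U(\cdot)x_n+V(w_n)$, $w_n\in N_F^p(y_n)$. Since $|w_n(s)|\leqslant\mu(s)$, assumption $(k_7)$ makes each set $k(T,s)\{w_n(s)\}_\n$ relatively compact, so $\beta(k(T,s)\{w_n(s)\}_\n)=0$ pointwise. Theorem \ref{3}(i) then yields $\beta\bigl(\{V(w_n)(T)\}_\n\bigr)=0$, whence
\[\beta(\{y_n(T)\}_\n)\<\|U(T)\|_{\mathcal L}\,\beta(\{x_n\}_\n)\<e^{-\omega T}\beta(\{x_n\}_\n).\]
Passing to the supremum over countable subsets one obtains $\w\beta(P_T(\Omega))\<e^{-\omega T}\w\beta(\Omega)$ with the MNC $\w\beta$ from \eqref{sequentialbeta}. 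Since $e^{-\omega T}<1$, the inequality $\w\beta(\Omega)\<\w\beta(P_T(\Omega))$ forces $\w\beta(\Omega)=0$, i.e.\ $\Omega$ is relatively compact: $P_T$ is $\w\beta$-condensing.

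Finally I would invoke the Darbo--Sadovskij-type fixed point theorem (Theorem \ref{4}) to produce $x_0\in D(0,R)$ with $x_0\in P_T(x_0)$; any $y\in S_F^p(U(\cdot)x_0)$ with $y(T)=x_0=y(0)/U(0)$ provides a $T$-periodic solution of \eqref{inclusion} with $h(t)=U(t)x_0$. The only point that needs minor care is the choice of $\omega$: unlike in Theorem \ref{pervol1}, no arithmetic restriction on $\omega$ is required because the condensing constant is already $e^{-\omega T}$, free from any MNC constant coming from $(F_5)$. The main obstacle, therefore, is not an estimate but rather the bookkeeping needed to confirm that Corollary \ref{geometry2} and Corollary \ref{wn3} truly apply to the perturbed inhomogeneity $h=U(\cdot)x$; both hold verbatim because $U(\cdot)x\in C(I,E)$ and the other hypotheses do not depend on $h$.
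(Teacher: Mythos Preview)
Your argument is correct and follows essentially the same route as the paper's own proof: define the Poincar\'e operator $P_T$ via a contractive family $\{U(t)\}$, show it maps a closed ball into itself, obtain admissibility from Corollaries \ref{geometry2} and \ref{wn3}, and use $(k_7)$ with Theorem \ref{3} to annihilate the integral contribution so that $\w\beta(P_T(\Omega))\<\|U(T)\|_{\mathcal L}\,\w\beta(\Omega)$, then apply Theorem \ref{4}. Two cosmetic remarks: the paper actually relaxes $(U_2)$--$(U_3)$ to mere strong continuity plus $\|U(T)\|_{\mathcal L}<1$, and your parenthetical ``in fact acyclic-valued'' overshoots---the continuous image of an $R_\delta$-set need not be acyclic, but admissibility (which is all Theorem \ref{4} requires) is preserved under composition with continuous single-valued maps, so the slip is harmless.
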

\begin{proof}
Assume that a family of operators $\{U(t)\}_{t\in I}\subset{\mathcal L}(E)$ satisfies conditions of the form:
\begin{itemize}
\item[$(U_1)$] $U(0)=id_E$,
\item[$(U_2')$] $\{U(t)\}_{t\in I}$ is strongly continuous, i.e. the map $I\ni t\mapsto U(t)x\in E$ is continuous for every $x\in E$,
\item[$(U_3')$] $||U(T)||_{{\mathcal L}}<1$.
\end{itemize}
Conditions $(U_2')$-$(U_3')$ are obviously less restrictive than previously formulated assumptions $(U_2)$-$(U_3)$. The uniform boundedness principle together with condition $(U_2')$ imply uniform boundedness of the family $\{U(t)\}_{t\in I}$. In view of Corollary \ref{wn3}. the multivalued map $E\ni x\mapsto S_F^p(U(\cdot)x)\subset C(I,E)$ must be upper semicontinuous. Moreover, this map possesses nonempty compact and acyclic values (as it was proven in Corollary \ref{solset2}. and Corollary \ref{geometry2}.). \par Let $r=||U(T)||_{{\mathcal L}}$ and $M=||k(T,\cdot)||_q||\mu||_p$. Then $R=(1-r)^{-1}\cdot M>0$. We claim that the Poincar\'e-type operator $P_T\colon D(0,R)\map D(0,R)$, such that $P_T(x)=ev_T(S_F^p(U(\cdot)x))$, is condensing relative to MNC $\w{\beta}$. Let $\Omega$ be a bounded subset of $E$ and $\w{\beta}(P_T(\Omega))=\beta(\{U(T)x_n+V(w_n)(T)\}_\n)$. Then, applying Theorem \ref{3}. and condition $(k_7)$, we obtain a sequence of inequalities:
\begin{align*}
\w{\beta}(P_T(\Omega))\<||U(T)||_{{\mathcal L}}\beta(\{x_n\}_\n)+2\int_0^T\beta(k(T,s)\{w_n(s)\}_\n)\,ds\<||U(T)||_{{\mathcal L}}\w{\beta}(\Omega).
\end{align*}
Therefore, from $(U_3')$ it follows that $\w{\beta}(P_T(\Omega))<\w{\beta}(\Omega)$, provided $\w{\beta}(\Omega)>0$.
\par Summarizing: $\w{\beta}$-condensing and strongly admissible operator $P_T$ possesses a fixed point $x_0\in P_T(x_0)$ (following Theorem \ref{4}.), which guarantees the existence of a $T$-periodic solution to the problem \eqref{inclusion}, with the inhomogeneity $h\in C(I,E)$ such that $h(t)=U(t)x_0$.   
\end{proof}
\par The article concludes by formulating a rather simple observation regarding the existence of periodic solutions to the following, so called Hammerstein integral inclusion
\begin{equation}\label{math_H}
x(t)\in h(t)+\int_0^Tk(t,s)F(s,x(s))\,ds,\;t\in I.
\end{equation}
This time, kernel $k$ is a mapping defined on the whole product $I\times I$. Observe that the Volterra inclusion \eqref{inclusion} is nothing more than a special case of \eqref{math_H}. The next theorem significantly generalizes the result from \cite{piet}.
\begin{theorem}\label{perham1}
Let $p\in[1,\infty)$ and $E$ be a reflexive space for $p\in(1,\infty)$. Suppose that the multivalued map $F$ satisfies assumptions $(F_1)$-$(F_5)$, while the function $k\colon I\times I\to{\mathcal L}(E)$ conditions
\begin{itemize}
\item[{\em $(k_5')$}] for every $t\in I$, $k(t,\cdot)\in L^q(I,{\mathcal L}(E))$, where $p^{-1}+q^{-1}=1$,
\item[{\em $(k_6')$}] the mapping $I\ni t\mapsto k(t,\cdot)\in L^q(I,{\mathcal L}(E))$ is continuous and $T$-periodic.
\end{itemize}
Suppose futher that the following inequality holds
\begin{equation}\label{nierzsup}
2\sup\limits_{t\in I}||k(t,\cdot)||_q||\eta||_p<1.
\end{equation}
Then the integral inclusion \eqref{math_H} possesses a continuous $T$-periodic solution for any $T$-periodic function $h\in C(I,E)$.
\end{theorem}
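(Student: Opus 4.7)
The plan is to construct a Hammerstein analogue of the operator ${\mathcal F}$ from \eqref{calF}, show it is acyclic and $\nu_0$-condensing (where $\nu_0$ denotes the MNC \eqref{measure} taken with $L=0$) on a suitable ball in $C(I,E)$, apply Theorem \ref{4}, and observe that $T$-periodicity of the resulting fixed point is automatic from the periodicity of $h$ and $k$.

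First I would invoke the standard a priori bound reduction described just before Theorem \ref{solset} to replace $(F_4)$ by $(F_4')$ via a radial retraction, which preserves $(F_1)$-$(F_5)$. Define $V\colon L^p(I,E)\to C(I,E)$ by $V(w)(t)=\int_0^T k(t,s)w(s)\,ds$; both its continuity and the equicontinuity of images of integrably bounded sets are direct H\"older-type consequences of $(k_5')$-$(k_6')$. Setting ${\mathcal F}(x)=h+V\circ N_F^p(x)$ on $C(I,E)$, the verification that ${\mathcal F}$ has nonempty convex compact values and is upper semicontinuous is a verbatim adaptation of Proposition \ref{Niemytskij} and the first part of Lemma \ref{calFlem}: weak upper semicontinuity of $N_F^p$ (supplied by Theorem \ref{2} for $p=1$, Eberlein--\v{S}mulian for $p>1$), weak-to-weak continuity of $V$, and Heinz's inequality together with $(F_5)$ deliver compactness of values. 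The uniform bound $||{\mathcal F}(x)||\<||h||+\sup_{t\in I}||k(t,\cdot)||_q||\mu||_p=:R$ shows that ${\mathcal F}$ maps the closed ball $D(0,R)\subset C(I,E)$ into itself.

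The central step is to verify that \eqref{nierzsup} makes ${\mathcal F}$ $\nu_0$-condensing. Suppose $\nu_0(\Omega)\<\nu_0({\mathcal F}(\Omega))$ for some bounded $\Omega\subset C(I,E)$; pick $\{v_n\}_\n\in\Delta({\mathcal F}(\Omega))$ realizing $\nu_0({\mathcal F}(\Omega))$, with $v_n=h+V(w_n)$, $w_n\in N_F^p(u_n)$, $u_n\in\Omega$, and $\{y_n\}_\n\in\Delta(\Omega)$ realizing $\nu_0(\Omega)$. Arguing as in \eqref{analog} but integrating over the whole interval,
\begin{align*}
\beta(\{v_n(t)\}_\n)
&\<2\int_0^T||k(t,s)||_{{\mathcal L}}\,\eta(s)\,\beta(\{u_n(s)\}_\n)\,ds\\
&\<2||k(t,\cdot)||_q\,||\eta||_p\,\sup_{s\in I}\beta(\{u_n(s)\}_\n).
\end{align*}
Taking $\sup_{t\in I}$, using $\{u_n\}\in\Delta(\Omega)$ to get $\sup_t\beta(\{u_n(t)\}_\n)\<\sup_t\beta(\{y_n(t)\}_\n)$, and then the assumed componentwise inequality $\sup_t\beta(\{y_n(t)\}_\n)\<\sup_t\beta(\{v_n(t)\}_\n)$, yields $\sup_t\beta(\{v_n(t)\}_\n)\<(2\sup_t||k(t,\cdot)||_q||\eta||_p)\sup_t\beta(\{v_n(t)\}_\n)$. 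Since the factor is strictly less than $1$ by \eqref{nierzsup}, this forces $\sup_t\beta(\{v_n(t)\}_\n)=0$; the $\modulus_C$-component vanishes automatically by equicontinuity of ${\mathcal F}(\Omega)$, so $\nu_0({\mathcal F}(\Omega))=(0,0)$, whence $\nu_0(\Omega)=(0,0)$ and $\Omega$ is relatively compact.

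Theorem \ref{4} then produces a fixed point $x\in{\mathcal F}(x)$, i.e.\ a solution of \eqref{math_H}. The $T$-periodicity is essentially free: $(k_6')$ gives $k(0,\cdot)=k(T,\cdot)$ in $L^q(I,{\mathcal L}(E))$ and $h(0)=h(T)$ by hypothesis, so the representation $x(t)=h(t)+\int_0^T k(t,s)w(s)\,ds$ for some $w\in N_F^p(x)$ directly gives $x(0)=x(T)$. No substantive obstacle arises; the only delicate calibration is that the factor $2$ from Heinz's inequality is precisely what dictates the sharp constant in \eqref{nierzsup}.
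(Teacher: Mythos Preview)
Your proposal is correct and follows essentially the same route as the paper: define the Hammerstein operator ${\mathcal F}_T=h+V_T\circ N_F^p$, transplant the proof of Lemma \ref{calFlem} to obtain upper semicontinuity and convex compact values, use \eqref{nierzsup} together with the Heinz estimate over the full interval $[0,T]$ to get $\nu_0$-condensingness, apply Theorem \ref{4}, and read off $x(0)=x(T)$ from $(k_6')$ and the periodicity of $h$. The only cosmetic difference is that you close the inequality on $\sup_t\beta(\{v_n(t)\})$ itself, while the paper argues by contradiction on $\sup_t\beta(\{y_n(t)\})$; both are equivalent.
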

\begin{proof}
Let $h\in C(I,E)$ be any function with period $T$. Denote by $V_T\colon L^p(I,E)\to C(I,E)$ a mapping of the following form:
\[V_T(w)(t)=\int_0^Tk(t,s)w(s)\,ds,\;t\in I.\]
It is clear that the fixed points of the operator ${\mathcal F}_T=h+V_T\circ N_F^p$ determine the solutions of inclusion \eqref{math_H}. These points are located in the ball of radius $R=||h||+\sup_{t\in I}||k(t,\cdot)||_q||\mu||_p$. Substitution of the integral operator $V_T$ in place of the Volterra operator $V$ in the proof of Lemma \ref{calFlem}. leads to the observation that the set-valued map ${\mathcal F}_T\colon D(0,R)\map D(0,R)$ is upper semicontinuous and possesses nonempty convex compact values. The main inconvenience that prevents adopting all the arguments contained in the proof of Lemma \ref{calFlem}. is that 
\[\inf_{L\geqslant 0}\sup_{t\in I}e^{-Lt}\left(\int_0^T(\eta(s)e^{Ls})^p\,ds\right)^\frac{1}{p}\geqslant\inf_{L\geqslant 0}e^{-L0}\left(\int_0^T(\eta(s)e^{Ls})^p\,ds\right)^\frac{1}{p}\geqslant\left(\int_0^T\eta(s)^p\,ds\right)^\frac{1}{p}=||\eta||_p,\] while the term $||\eta||_p$ is significantly greater than zero, provided assumption $(F_5)$ is nontrivial. The analysis of this proof, however, indicates that the adoption of the hypothesis \eqref{nierzsup} authorizes the condensing of the operator ${\mathcal F}_T$ relative to MNC $\nu_0$ described as follows: \[\nu_0(\Omega)=\max\limits_{D\in\Delta(\Omega)}\left(\sup\limits_{t\in I}\beta(D(t)),\modulus_C(D)\right).\] \par To confirm this observation, suppose that measures of noncompactness $\nu_0(\Omega)$ and $\nu_0({\mathcal F}_T(\Omega))$ are attained respectively on countable sets $\{y_n\}_\n$ and $\{v_n\}_\n$ and make a small adjustment in the sequence of inequalities bearing the number \eqref{20}, namely
\begin{align*}
\beta(\{v_n(t)\}_\n)&=\beta(\{h(t)+V_T(w_n)(t)\}_\n)\<2\!\!\int_0^T\!\!||k(t,s)||_{{\mathcal L}}\beta(F(s,\{u_n(s)\}_\n))ds\\&\<2\!\!\int_0^T\!\!||k(t,s)||_{{\mathcal L}}\eta(s)\beta(\{u_n(s)\}_\n)ds\<\sup\limits_{t\in I}\beta(\{u_n(t)\}_\n)\,2\int_0^T||k(t,s)||_{{\mathcal L}}\eta(s)ds.
\end{align*}
Hence
\begin{align*}
\sup\limits_{t\in I}\beta(\{v_n(t)\}_\n)&\<\sup\limits_{t\in I}\beta(\{u_n(t)\}_\n)2\sup_{t\in I}\int_0^T||k(t,s)||_{{\mathcal L}}\eta(s)ds\\&\<\sup_{t\in I}\beta(\{u_n(t)\}_\n)2\sup_{t\in I}||k(t,\cdot)||_q||\eta||_p\<2\sup\limits_{t\in I}||k(t,\cdot)||_q||\eta||_p\sup_{t\in I}\beta(\{y_n(t)\}_\n).
\end{align*}
If we assume that $\nu_0(\Omega)\<\nu_0({\mathcal F}_T(\Omega))$ then $\sup_{t\in I}\beta(\{y_n(t)\}_\n)=0$, otherwise hypothesis \eqref{nierzsup} leads to the contradiction. On the other hand $\modulus_C(\{v_n\}_\n)=0$, which means that $\modulus_C(\{y_n\}_\n)=0$. In conclusion $\Omega$ must be relatively compact and ${\mathcal F}_T$ is $\nu_0$-condensing operator.
\par Suppose that $x\in D(0,R)$ is a fixed point of the map ${\mathcal F}_T$. Then there exists $w\in N_F^p(x)$ satisfying equation $x=h+V_T(w)$. The assumption $(k_6')$ implies that $k(0,s)w(s)=k(T,s)w(s)$ for almost all $s\in I$, and therefore
\[x(0)=h(0)+\int_0^Tk(0,s)w(s)\,ds=h(T)+\int_0^Tk(T,s)w(s)\,ds=x(T).\] Thus, it is clear that in the case of $T$-periodicity of the mappings $h$ and $k(t,\cdot)$ every solution to the inclusion \eqref{math_H} is $T$-periodic.
\end{proof}
The following thesis is fully legible in the context of Theorem \ref{perham1}. and Corollary \ref{solset2}.
\begin{theorem}\label{perham2}
Let $p\in[1,\infty)$, while the space $E$ is reflexive for $p\in(1,\infty)$. Assume that conditions $(F_1)$-$(F_4')$ and $(k_5')$-$(k_6')$ are satisfied, together with
\begin{itemize}
\item[$(k_7')$] the operator $k(t,s)$ is completely continuous for all $(t,s)\in I\times I$.
\end{itemize}
Then problem \eqref{math_H} possesses a continuous $T$-periodic solution for any $T$-periodic function $h\in C(I,E)$.
\end{theorem}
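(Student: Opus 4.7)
The plan is to mimic the proof of Theorem \ref{perham1}, replacing the argument that used the Hausdorff-MNC hypothesis $(F_5)$ by a compactness argument based on $(k_7')$, in the same spirit as the last paragraph of Lemma \ref{calFlem} and the transition from Theorem \ref{solset} to Corollary \ref{solset2}. Specifically, I would introduce the integral operator $V_T\colon L^p(I,E)\to C(I,E)$, $V_T(w)(t)=\int_0^Tk(t,s)w(s)\,ds$, and consider ${\mathcal F}_T=h+V_T\circ N_F^p$. Because $(F_4')$ provides an $L^p$-bound $\mu$ independent of $x$, every element of ${\mathcal F}_T(x)$ lies in the closed ball $D(0,R)\subset C(I,E)$ with $R=||h||+\sup_{t\in I}||k(t,\cdot)||_q||\mu||_p$, so ${\mathcal F}_T$ sends $D(0,R)$ into itself. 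Proposition \ref{Niemytskij} gives that $N_F^p$ has nonempty convex weakly compact values and is weakly upper semicontinuous, so ${\mathcal F}_T(x)$ is nonempty and convex for every $x$.

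Next I would verify that ${\mathcal F}_T$ is upper semicontinuous with compact values and, in fact, is a compact map. Taking $x_n\rightrightarrows x$ and $y_n=h+V_T(w_n)\in{\mathcal F}_T(x_n)$ with $w_n\in N_F^p(x_n)$, assumption $(k_6')$ and $|w_n|\<\mu$ yield equicontinuity of $\{y_n\}_\n$ via an estimate analogous to \eqref{equicon}. For pointwise compactness, since $(k_7')$ gives that each operator $k(t,s)$ is completely continuous and since $\{w_n(s)\}_\n$ is norm-bounded by $\mu(s)$ for almost all $s$, Theorem \ref{3} furnishes
\[\beta(\{y_n(t)\}_\n)\<2\int_0^T\beta(k(t,s)\{w_n(s)\}_\n)\,ds=0\]
for every $t\in I$. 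Arzelà's theorem then extracts a uniformly convergent subsequence of $(y_n)$. Passing to a further subsequence along which $w_n\rightharpoonup w\in N_F^p(x)$ (Proposition \ref{Niemytskij}), the weak continuity of $V_T$ forces the uniform limit to equal $h+V_T(w)\in{\mathcal F}_T(x)$. The same computation, applied to an arbitrary bounded sequence in $D(0,R)$, shows that ${\mathcal F}_T$ maps bounded sets to relatively compact ones; in particular it is $\nu_0$-condensing for the MNC introduced in the proof of Theorem \ref{perham1}.

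Thus ${\mathcal F}_T\colon D(0,R)\map D(0,R)$ is acyclic (nonempty convex compact values) and $\nu_0$-condensing, and Theorem \ref{4} produces a fixed point $x_0=h+V_T(w)$ with $w\in N_F^p(x_0)$. Periodicity follows from $(k_6')$: the $T$-periodicity of the map $t\mapsto k(t,\cdot)\in L^q(I,{\mathcal L}(E))$ means $k(0,\cdot)=k(T,\cdot)$, so together with $T$-periodicity of $h$,
\[x_0(0)=h(0)+\int_0^Tk(0,s)w(s)\,ds=h(T)+\int_0^Tk(T,s)w(s)\,ds=x_0(T).\]

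The only mildly delicate point is the verification that $(k_7')$ genuinely substitutes for $(F_5)$ in the condensing/compactness estimate — but this is exactly the observation already flagged in the closing paragraph of Lemma \ref{calFlem}: complete continuity of $k(t,s)$ kills the Hausdorff MNC of the bounded set $\{w_n(s)\}_\n$ pointwise, regardless of how this set itself behaves, so the usual Heinz inequality collapses to zero. No new obstruction arises beyond assembling these pieces.
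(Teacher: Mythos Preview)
Your proposal is correct and follows exactly the route the paper indicates: the paper does not give a separate proof of Theorem~\ref{perham2} but remarks that it is ``fully legible in the context of Theorem~\ref{perham1} and Corollary~\ref{solset2}'', and you have faithfully spelled out that combination---reusing the fixed-point setup of Theorem~\ref{perham1} while replacing the $(F_5)$-based MNC estimate by the $(k_7')$-based collapse $\beta(k(t,s)\{w_n(s)\}_\n)=0$, precisely as in the last paragraph of Lemma~\ref{calFlem}.
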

\begin{example}
Let $f\colon I\to{\mathcal L}(E)$ be a continuous and $T$-periodic function, while $g\in L^q(I,{\mathcal L}(E))$. Then kernel $k\colon I\times I\to{\mathcal L}(E)$ defined by
\[k(t,s)=f(t)\circ g(s),\;(t,s)\in I\times I\]
satisfies conditions $(k_5')$-$(k_6')$.
\end{example}

\end{document}